\theoremstyle{plain}
\newtheorem{theorem}{Theorem}[section]
\newtheorem{lemma}[theorem]{Lemma}
\newtheorem{corollary}[theorem]{Corollary}
\newtheorem{proposition}[theorem]{Proposition}
\newtheorem{definition}[theorem]{Definition}
\newtheorem{example}[theorem]{Example}
\newtheorem{remark}[theorem]{Remark}
\newcommand{\eins}{\boldsymbol{1}}
\DeclareMathOperator{\Gal}{Gal}
\DeclareMathOperator{\Hom}{Hom}
\DeclareMathOperator{\im}{im}
\newcommand{\CC}{\mathbb{C}}
\newcommand{\bc}{\mathbb{C}}
\newcommand{\GG}{\mathbb{G}}
\newcommand{\NN}{\mathbb{N}}
\newcommand{\QQ}{\mathbb{Q}}
\newcommand{\RR}{\mathbb{R}}
\newcommand{\br}{\mathbb{R}}
\newcommand{\ZZ}{\mathbb{Z}}
\newcommand{\Z}{\mathbb{Z}}
\newcommand{\calF}{\mathcal{F}}
\newcommand{\calL}{\mathcal{L}}
\newcommand{\calP}{\mathcal{P}}
\newcommand{\co}{\mathcal{O}}
\newcommand{\A}{\mathcal{A}}
\newcommand{\W}{\mathcal{W}}
\newcommand{\frp}{\mathfrak{p}}
\newcommand{\Fit}{\mathrm{Fit}}
\newcommand{\bz}{\mathbb{Z}}
\newcommand{\La}{\Lambda}
\newcommand{\D}{\mathrm{d}} 
\newcommand{\G}{\widehat{G}}
\begin{document}

\title[]{On Weil-Stark elements, II: \\
refined Stark conjectures}

\author{David Burns, Daniel Macias Castillo and Soogil Seo}

\begin{abstract} The theory of Weil-Stark elements is used to develop an axiomatic approach to the formulation of refined versions of Stark's Conjecture. This gives concrete new results concerning leading terms of Artin $L$-series and arithmetic properties of Stark elements.\end{abstract}

\address{King's College London,
Department of Mathematics,
London WC2R 2LS,
U.K.}
\email{david.burns@kcl.ac.uk}

\address{Departamento de Matem\'aticas, 
Universidad Aut\'onoma de Madrid, 28049 Madrid (Spain);
and Instituto de Ciencias Matem\'aticas, 28049 Madrid (Spain).}
\email{daniel.macias@icmat.es}

\address{Yonsei University, Department of Mathematics, Seoul, Korea.}
\email{sgseo@yonsei.ac.kr}

\thanks{MSC: 11R42, 11R59 (primary), 11R27, 11R34 (secondary).}

\maketitle
%
\section{Introduction}

\subsection{}\label{ga section} We study refined versions of  Stark's seminal conjectures on the algebraic properties of the values at zero of Artin $L$-series. To set the context, we fix a finite Galois extension of global fields $L/K$ of group $G$ and a finite non-empty set of places $S$ of $K$ that contains all archimedean places and all ramifying in $L$. We denote the set of all irreducible complex characters of $G$ by $\widehat{G}$ and for $\chi$ in $\widehat{G}$ we write $L_S(\chi,s)$ for the $S$-truncated Artin $L$-series of $\chi$ and, for each integer $m$, set $L_{S}^{m}(\chi,s) := s^{-m}L_{S}(\chi,s)$. Then, for each non-negative integer $a$, we define an element of the complex group ring $\CC[G]$ of $G$ by setting   
\begin{equation*}\label{a stick def}\theta^{\langle a\rangle}_{L/K,S}(0) := {\sum}_{g \in G}\left({\sum}_\chi\frac{\chi(1)}{|G|}\check\chi(g)L_{S}^{a\cdot\chi(1)}(\check\chi,0)\right)g\in \CC[G]\end{equation*}
where $\check\chi$ is the contragredient of $\chi$ and the inner sum runs over $\chi$ in $\widehat{G}$ for which $L_{S}^{a\cdot\chi(1)}(\check\chi,s)$ is holomorphic at $s=0$. The element $\theta^{\langle 0\rangle}_{L/K,S}(0)$ is the `non-abelian Stickelberger element' of Hayes \cite{hayes} and belongs to the centre $\zeta(\QQ[G])$ of $\QQ[G]$. In general, $\theta^{\langle a\rangle}_{L/K,S}(0)$ belongs to $\zeta(\RR[G])$ and Stark's conjecture, as interpreted by Tate \cite{tate}, predicts it should be equal, up to an undetermined factor in $\zeta(\QQ[G])$, to a regulator constructed from algebraic units. 

To normalise the regulator we write $S_L$ for the set of places of $L$ that lie above a place in $S$, $\mathcal{O}_{L,S}^\times$ for the subring of $L^\times$ comprising all elements that are units at all places outside $S_L$ and $X_{L,S}$ for the subgroup of the free abelian group on $S_L$ comprising elements whose coefficients sum to $0$. We use the Dirichlet regulator isomorphism of $\br[G]$-modules  
\begin{equation}\label{lambdadef} R_{L,S} : \br \otimes_\ZZ \mathcal{O}^\times_{L,S} \rightarrow \br\otimes_\ZZ X_{L,S},\quad 1\otimes u \mapsto -{\sum}_{w\in S_L}\mathrm{log}|u|_w \cdot
w,\end{equation}
where in the sum $|\cdot |_w$ denotes the absolute value at
$w$ (normalised as in \cite[Chap. 0, 0.2]{tate}). Finally, to each homomorphism $\varphi$ in $\Hom_{\RR[G]}(\RR\otimes_\ZZ \mathcal{O}_{L,S}^{\times},\RR\otimes_\ZZ X_{L,S})$, we associate an `($a$-th derived) Artin $\mathscr{L}$-invariant' by setting  
\begin{equation}\label{key elements} \mathscr{L}^a(\varphi)  := \theta^{\langle a\rangle }_{L/K,S}(0)\cdot {\rm Nrd}_{\RR[G]}(\varphi\circ (R_{L,S})^{-1})\in \zeta(\RR[G]),\end{equation}
\noindent{}where ${\rm Nrd}_{\RR[G]}$ denotes the reduced norm of the semisimple algebra $\RR[G]$. 
%

These invariants are of interest for several reasons. For example, in certain cases they encode properties of  canonical special elements (such as Brumer-Stark elements, Rubin-Stark elements or Chinburg-Stark elements) and so are related to problems such as generalized versions of Hilbert's 12th Problem (cf. Tate \cite{tate}), constructive versions of the Deligne-Serre Theorem (cf. Stark \cite{stark77, stark} and Chinburg \cite{chin}) and the construction of new Euler systems (cf. Rubin  \cite{R}). In another more concrete direction, a bound on the `denominator' of $\mathscr{L}^a(\varphi)$ has advantages for the purpose of obtaining evidence for Stark's original conjecture via computer calculations   (cf. the discussion of Dummit in \cite[\S14]{dumm}).

Our main aim then is to investigate the properties of Artin $\mathscr{L}$-invariants and the family of Weil-Stark elements (from \cite{bms1}) will play a key role in our approach. We recall that these elements  extend the classical theory of cyclotomic elements to the setting of general Galois extensions of global fields, and have well-understood arithmetic properties. Here we shall use them to develop an axiomatic approach to the derivation of arithmetic properties of Artin $\mathscr{L}$-invariants, and thereby refined versions of Stark's Conjecture, from the leading term conjecture for Artin $L$-series studied in \cite{dals}. 
\subsection{}\label{concrete} When combined with earlier work (of multiple authors)
, this approach has a range of concrete consequences. To give an idea, we now present two such results, the first focussing on $\mathscr{L}$-invariants and the second on special elements. 

For a non-archimedean place $v$ of $K$ we write ${\rm N}v$ for its absolute norm and fix a place $w_v$ of $L$ above $v$, with decomposition subgroup $G_v$ in $G$ and Frobenius automorphism $\sigma_v$ in $G_v$. We then fix an auxiliary finite set $T$ of places of $K$, disjoint from $S$ and satisfying a mild technical hypotheses (as in (\ref{data fix}) below), and set  
\begin{equation*}\label{delta T def} \gamma_T := {\prod}_{v \in T}{\rm Nrd}_{\QQ[G]}(1-\sigma_{v}^{-1}{\rm N}v) \in \zeta(\QQ[G])^\times.\end{equation*}
We write $\mathcal{S}_S^T(L)$ for the integral dual Selmer group of $\mathbb{G}_m$ over $L$ (cf. \S\ref{selmer section}), 
 $\iota_\#$ for the $\QQ$-linear anti-involution of $\QQ[G]$ that inverts elements of $G$ and use the theory of non-commutative Fitting invariants (as reviewed in \S\ref{app higher fits2}). A precise version of the following result is derived from more general results in \S\ref{cm intro deduction} and \S\ref{gffs}.

\begin{theorem}\label{end result intro} 
Assume $L/K$ and a fixed prime $p$ satisfy either of the following conditions: 
\begin{itemize}
\item[(i)] $L$ and $K$ are function fields and $p$ is any prime;
\item[(ii)] $K$ is totally real, $L$ is CM, $p$ is odd and the Sylow $p$-subgroups of $G$ are abelian. 
\end{itemize}
Fix an integer $a$ with $0\le a < |S|-1$. Then there is an explicit idempotent $e_a$ of $\zeta(\QQ[G])$ that depends only on $a$ and $\{G_v\}_{v \in S}$ and has the following property: if $t$ is the smallest non-negative integer with $p^te_a\in \ZZ_{(p)}[G]$, then for all $\varphi \in \Hom_G(\mathcal{O}_{L,S}^{\times},X_{L,S})$ one has
\begin{equation}\label{bs intro} p^te_a\cdot\mathscr{L}^a(|G|\varphi)\in \gamma_T^{-1}\cdot\iota_\#(\ZZ_{(p)}\otimes_\ZZ{\rm Fit}^{{\rm tr},a}_{\ZZ[G]}(\mathcal{S}_S^T(L))).\end{equation}
\end{theorem}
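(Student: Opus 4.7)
The plan is to deduce Theorem~\ref{end result intro} from the general axiomatic framework developed in the body of the paper, which derives arithmetic properties of the invariants $\mathscr{L}^a(\varphi)$ from the leading term conjecture (LTC) for Artin $L$-series studied in \cite{dals}, and then to invoke the known validity of LTC in each of the two listed cases.

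Step 1 (construction of $e_a$ and reduction via Weil-Stark elements). I would take $e_a$ to be the sum of the primitive central idempotents $e_\chi$ of $\QQ[G]$ over those $\chi \in \widehat{G}$ for which the order of vanishing $r_{S,\chi}$ of $L_S(\chi,s)$ at $s=0$ equals $a\cdot\chi(1)$. By the formula of Tate, $r_{S,\chi}$ is determined by the decomposition groups $\{G_v\}_{v \in S}$ acting on the trivial representation, so $e_a$ has precisely the claimed dependence; the hypothesis $a < |S|-1$ guarantees that at least one such $\chi$ contributes. By construction, $e_a\cdot\theta^{\langle a\rangle}_{L/K,S}(0)$ carries the full non-trivial leading-term content in the $e_a$-component, and the results of \cite{bms1} identify $e_a\cdot\mathscr{L}^a(|G|\varphi)$ with the reduced norm of $\varphi$ applied, via the Dirichlet isomorphism $R_{L,S}$, to the canonical Weil-Stark element of order $a$ attached to $(L/K, S, T)$.

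Step 2 (passage to Fitting invariants via LTC). Under LTC at the prime $p$, this Weil-Stark element is identified with the image of a canonical generator of the determinant of a perfect complex $C_{S,T}^\bullet$ of $\ZZ_{(p)}[G]$-modules, whose cohomology recovers a $T$-smoothed version of $\ZZ_{(p)}\otimes_{\ZZ}\mathcal{O}_{L,S}^\times$ in degree zero and $\ZZ_{(p)}\otimes_{\ZZ}\mathcal{S}_S^T(L)$ in degree one. Feeding this perfect complex into the non-commutative Fitting-invariant machinery of \S\ref{app higher fits2} converts the determinantal identity supplied by LTC into a containment of exactly the form~(\ref{bs intro}): the involution $\iota_\#$ records the $\chi \leftrightarrow \check\chi$ duality built into $\theta^{\langle a\rangle}_{L/K,S}(0)$, the factor $\gamma_T^{-1}$ records the passage from $S$-units to $(S,T)$-units, and the index $a$ of the higher Fitting invariant $\mathrm{Fit}^{\mathrm{tr},a}_{\ZZ[G]}$ records the extraction of an $a$-th order leading term. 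Multiplication by $p^t$ then clears the denominator of $e_a$ and yields the required $p$-integrality.

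Step 3 (invoking the known cases of LTC). In case~(i), LTC is known in full generality in the function-field setting, for every prime $p$, as a consequence of Weil's theorem on Frobenius eigenvalues, as recorded in \cite{dals}. In case~(ii), LTC at the odd prime $p$ for a finite CM Galois extension of a totally real base whose Sylow $p$-subgroup of $G$ is abelian follows, via the general reduction of \cite{dals}, from the now-established main conjecture of Iwasawa theory for totally real fields (work of Wiles, Ritter--Weiss and Dasgupta--Kakde). The principal obstacle anticipated lies not in any individual input but in the careful matching of normalisations in Steps~1 and 2: tracking the factors of $|G|$ and $p^t$, the $\iota_\#$-twist and the Euler factor $\gamma_T$ through the reduced-norm and higher Fitting-invariant formalisms, so that the abstract determinantal containment provided by LTC specialises exactly to~(\ref{bs intro}) for every $\varphi$ in $\Hom_G(\mathcal{O}_{L,S}^\times, X_{L,S})$.
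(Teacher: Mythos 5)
There is a genuine gap centred on the choice of the idempotent $e_a$ and the scope of the known cases of the leading term conjecture.

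In case (ii) the paper does not establish ${\rm LTC}(L/K,\ZZ_{(p)}[G])$; it establishes only the `minus part' ${\rm LTC}(L/K,\ZZ_{(p)}[G]e_{-})$ for $e_{-}=(1-\tau)/2$ (see Corollary~\ref{pminuspart}), via Nickel's theorem on the Strong-Stark Conjecture for totally odd characters, reduction to abelian $p$-elementary subquotients (this is where the abelian Sylow hypothesis is used), and the work of Bullach--Burns--Daoud--Seo building on Dasgupta--Kakde. Because only the minus component is available, the paper must carry the factor $e_-$ through the entire argument: its idempotent is $e_a = e_{-}\cdot e_{(a),S}$, not an idempotent detected purely by orders of vanishing. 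Your $e_a$ --- the sum of $e_\chi$ over all $\chi$ with ${\rm ord}_{s=0}L_S(\chi,s)=a\chi(1)$ --- contains characters $\chi$ with $\chi(\tau)=\chi(1)$, and for those plus-characters the argument has no input, so the claimed containment would not follow. (Your $e_a$ is also larger than $e_{a,S}$ for a second reason: there exist $\chi$ with $\sum_{v\in S}\dim H^0(G_v,V_\chi)=a\chi(1)$ for which no individual term is $0$ or $\chi(1)$, so condition (\ref{new def})(ii) fails; those $\chi$ do not lie in $\widehat{G}_{(a),S}$ and are not controlled by Lemma~\ref{fit vanishing}.) In case (i) the paper's idempotent has a $(1-e_v)$ twist for a chosen $v\in S$, again for structural reasons coming from the decomposition in Theorem~\ref{main result alg}.

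The second, more technical gap is that Step~2 skips over the actual mechanism. The passage from LTC to the containment~(\ref{bs intro}) is not a direct determinantal identity; it goes through the decomposition Theorem~\ref{main result alg} for $(1-e_v+e_{\bf 1})e_{(a),S}\cdot{\rm Fit}_{\ZZ[G]}^{{\rm tr},a}(h_v)$, the descent statement Proposition~\ref{last}, the identification of $\mathscr{L}$-invariants with wedge-products of maps applied to generalised Stark elements in Lemma~\ref{ell inv interpretation}, and the identification of those elements as Weil-Stark elements via Theorem~\ref{eTNC}. The ideal $\mathfrak{m}^a_v(L/K)$ then measures the gap between the locally-quadratic presentation $h_v$ and the module-theoretic Fitting invariant, and the choice $x=p^t$ is made precisely so that $p^t\in\mathfrak{m}^a_v(L/K)$. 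Your proposal names none of these ingredients and in particular never explains why the hypothesis $a<|S|-1$ is needed (it is used, via Lemma~\ref{fit vanishing}(ii), to kill the trivial-character term and reduce $(1-e_v+e_{\bf 1})e_{(a),S}$ to $e_{(a),S}$). Finally, the attribution in case (i) should be to the Burns--Kakde paper \cite{bk}, not to Weil's theorem as recorded in \cite{dals}.
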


The idempotent $e_a$ in this result rarely vanishes (and its vanishing also strongly restricts $\mathcal{S}_S^T(L)$). For instance, one has $p^te_0\cdot\mathscr{L}^0(|G|\varphi) = \theta^{\langle 0\rangle}_{L/K,S}(0)$ for all $\varphi$. 
This fact implies, in (i), that (\ref{bs intro}) refines, and generalises, a result of Deligne on divisor class groups of curves  and, in (ii), that (\ref{bs intro}) implies the validity of a refined version of the `non-abelian Brumer-Stark Conjecture' formulated by Nickel \cite{nickel-aif} and  the first author \cite{dals} (cf. Remarks \ref{dals improve} and \ref{deligne rem})

The second result recorded here concerns special elements studied by Stark \cite{stark} and Chinburg \cite{chin} and is proved in \S\ref{cs}. It refers to the Strong-Stark Conjecture, as formulated by Chinburg \cite{chinburgomega} and proved for representations with $\QQ$-valued characters by Tate \cite[Ch. II, Th. 6.8]{tate} and for totally odd characters by Nickel \cite{NickelSS}.

\begin{theorem}\label{lastone2} Assume $K$ is a number field and $|S| > 1$. Let $\co$ be a finite extension of $\ZZ$ and $\psi:G\to{\rm GL}_2(\co)$ an irreducible representation of $G$ of character $\chi_\psi$ that validates the Strong Stark Conjecture and has $L_{S}(\chi_\psi,0) = 0 \not= L^1_S(\chi_\psi,0)$. Then there is a unique place $v_1$ in $S$ with $H^0(G_{v_1},\co^2)\not= (0)$. Assume $w_{v_1}$ is archimedean, 
corresponding to $\sigma: L\to \bc$ and identify $L$ with $\sigma(L)$.  
 Then, writing $\psi_\ast(\gamma_T)$ for the $\chi_\psi$-component of $\gamma_T$, the element 
\[ \varepsilon_{S,T,\psi} := {\rm exp}\bigl(-\psi_\ast(\gamma_T)\cdot L^1_{S}(\chi_\psi,0)\bigr)\]
is a real unit in $L$ that has all of the following properties.
\begin{itemize}
\item[(i)] Either $\varepsilon_{S,T,\psi}$ or $-\varepsilon_{S,T,\psi}$ is
congruent to $1$ modulo all places of $L$ above those in $T$.
\item[(ii)] $|\varepsilon_{S,T,\psi}|_w = 1$ if $w$ is any place of $K$
 that does not lie above $v_1$.
\item[(iii)] For every $g \in G$ one has $-\log |g^{-1}(\varepsilon_{S,T,\psi})|_{w_{v_1}} = (\chi_\psi(g) + \chi_\psi(g\tau))\psi_\ast(\gamma_T)L_{S}^1(\chi_\psi,0)$ where $\tau$ is the unique non-trivial element of $G_{v_1}$.
\item[(iv)] For every map $\phi: \mathcal{O}_L^\times \to \ZZ[G]$ of $G$-modules, the element $2^{-2}|G|^3\phi(\varepsilon_{S,T,\psi})$ belongs to $\ZZ[G]$ and annihilates the $T$-modified ideal class group of $L$.
\end{itemize}
\end{theorem}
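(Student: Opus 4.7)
The plan is to combine three inputs: Tate's order-of-vanishing formula, the assumed validity of the Strong Stark Conjecture for $\chi_\psi$, and the rank-one specialisation of the Weil-Stark element machinery developed earlier in the paper. The element $\varepsilon_{S,T,\psi}$ will be identified, under $\sigma$, with a $T$-smoothed Chinburg-Stark unit attached to $\psi$, from which (i)-(iii) are essentially immediate and (iv) requires a careful application of Brumer-Stark-type annihilation results.

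First, I would pin down $v_1$. Tate's formula expresses $\mathrm{ord}_{s=0}L_S(\chi_\psi,s)$ as ${\sum}_{v\in S} \dim_\CC V_{\check\chi_\psi}^{G_v} - \dim_\CC V_{\check\chi_\psi}^G$; irreducibility of $\psi$ in dimension $2$ forces the global term to vanish, and the hypothesis that this order equals $1$ then singles out a unique $v_1 \in S$ with $H^0(G_{v_1},\co^2)$ nonzero, and this group is then one-dimensional over the fraction field of $\co$. Since $w_{v_1}$ is archimedean, $G_{v_1}=\langle\tau\rangle$ has order $2$ and $\psi(\tau)$ has eigenvalues $+1,-1$.

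Next, I would construct $\varepsilon_{S,T,\psi}$ and verify (i)-(iii). The rank-one condition at $\psi$ means that the $\check\chi_\psi$-component of the Dirichlet regulator $R_{L,S}$ is a bijection of one-dimensional spaces, the target being spanned by the image of $w_{v_1}$ in $X_{L,S}\otimes\CC$. Applying the Strong Stark Conjecture to the $\psi$-component of $\theta^{\langle 1\rangle}_{L/K,S}(0)$, as it enters the definition (\ref{key elements}) specialised to $a=1$, yields an algebraic unit $\eta \in \co_L^\times\otimes_\ZZ\QQ$ with $-\log|\sigma(\eta)|_{w_{v_1}}$ equal, in the $\psi$-isotypic component, to $\psi_\ast(\gamma_T)L_S^1(\chi_\psi,0)$. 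Exponentiating, and using that the $\psi$-line at $v_1$ is fixed by complex conjugation, then identifies $\sigma^{-1}(\varepsilon_{S,T,\psi})$ with a specific real unit of $L$. Property (ii) follows because the support of the regulator of $\eta$ lies above $v_1$, and (iii) by expanding the central idempotent $\frac{1}{|G|}{\sum}_g(\chi_\psi(g)+\chi_\psi(g\tau))g^{-1}$ that projects onto the $(\psi, G_{v_1}\text{-fixed})$ line at $v_1$. Property (i) reflects the standard mod-$T$ congruence built into the $\gamma_T$-factor used in \cite{bms1}, with the $\pm 1$ ambiguity arising from torsion in $\co_L^\times$.

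The hard part will be property (iv), which I would deduce from a Brumer-Stark-type annihilation in the spirit of Theorem \ref{end result intro}, applied to the $\psi$-component via the number-field framework developed in \S\ref{cs} and specialised to $a=1$. The explicit constant $2^{-2}|G|^3$ should arise by tracking the various denominators: a factor $|G|$ to clear the regulator denominator already present in $\mathscr{L}^1(|G|\varphi)$, a factor $|G|$ absorbed by the map $\phi$, a factor $|G|$ for descent from a $\zeta(\QQ[G])$-level annihilator to a $\ZZ[G]$-level one via $\iota_\#$, and a factor $2^{-2}=|G_{v_1}|^{-2}$ from the local normalisation at the archimedean $v_1$. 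The principal obstacle will be making this denominator bookkeeping sharp, and verifying that the non-abelian Fitting-invariant refinement of Brumer-Stark does produce exactly this annihilator rather than a coarser or differently-shaped one.
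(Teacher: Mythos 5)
Your plan is pointed in the right general direction, and you correctly locate the machinery in \S\ref{cs}, but the execution differs from the paper's proof in ways that leave genuine gaps. First, claims (i)--(iii) are not reproved in this paper at all: they are imported verbatim from \cite[Th.~4.3.1(ii) and Prop.~12.2.1]{dals}, so your sketch of them (exponentiating the $\psi$-isotypic part of the regulator, reading off (iii) from the projector $\frac{1}{|G|}\sum_g(\chi_\psi(g)+\chi_\psi(g\tau))g^{-1}$, attributing (i) to a $\gamma_T$-congruence) is a plausible outline but replaces what the paper treats as a citation with a new argument that would itself need to be completed. Second, and more seriously, for (iv) the relevant input is not Theorem \ref{end result intro} (which concerns function fields and CM fields with abelian Sylow $p$-subgroups, and uses the Fitting-invariant decomposition of Theorem \ref{main result alg}), but rather Proposition \ref{very last} specialised to $\Pi=\Pi_\psi$, $\pi=\pi^\psi$ and $r=1$ together with the improved containment of Remark \ref{finer ssc rem}. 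The paper then substitutes these into the proof of \cite[Prop.~12.2.1(iv)]{dals}, replacing two specific steps of that argument.

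Your bookkeeping of the constant $2^{-2}|G|^3$ is the clearest symptom of this gap. You attribute it to one $|G|$ from ``clearing the regulator denominator in $\mathscr{L}^1(|G|\varphi)$'', one from $\phi$, one from passing $\zeta(\QQ[G])\to\ZZ[G]$ via $\iota_\#$, and $2^{-2}=|G_{v_1}|^{-2}$. But the actual normalisation in Proposition \ref{very last} is $|G|^{1+r}\,{\rm pr}_\psi$ with $r=1$ and $\psi(1)=2$, i.e.\ $|G|^{2}\cdot(|G|/2)\,e_\psi$; and there is no step in the argument that passes ``a $\zeta(\QQ[G])$-level annihilator to a $\ZZ[G]$-level one via $\iota_\#$'', nor a factor $|G_{v_1}|^{-2}$ from an archimedean local normalisation. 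Arriving at the correct numerical value by a heuristic count is not enough; the proof needs the specific product $|G|\cdot{\rm tr}_{E/\QQ}(\cdots|G|m\cdots{\rm pr}_\psi)$ from Proposition \ref{very last}, applied with the $\theta$ chosen so that $\theta(U_{(p)}^\psi)=(X_{\psi})_{\rm tf,(p)}$ as in Remark \ref{finer ssc rem}, after which the constant and the passage to $\phi\in\Hom_G(\co_L^\times,\ZZ[G])$ follow the template of \cite[Prop.~12.2.1(iv)]{dals}. Without isolating Proposition \ref{very last} (and hence \cite[Th.~5.2(iii)]{bms1}) as the engine, your plan for (iv) does not go through.
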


Whilst the Chinburg-Stark Conjecture \cite[Conj. 1]{chin} predicts, under the same hypotheses as above, that $\varepsilon_{S,T,\psi}$ has the properties (i), (ii) and (iii),  
 the link to class group structures in claim (iv) is new. 
  In particular, 
   Theorem \ref{lastone2} now strengthens \cite[Th. 4.3.1(ii) and Prop. 12.2.1]{dals} and answers the question raised in \cite[Rem. 12.2.2]{dals}.

\subsection{}\label{Organisation} The main contents of this article are as follows. In \S\ref{n-c alg section} we review relevant results of \cite{bses} and then, in \S\ref{fitt ideal section}, we use these results to prove a canonical decomposition result for the higher non-commutative Fitting invariants of Selmer groups (see Theorem \ref{main result alg}). In \S\ref{hnase section} we extend the classical notion of `Stark element' and explain the connection between these generalised Stark elements, families of special elements in the literature and Artin $\mathscr{L}$-invariants. In \S\ref{5} we review the central conjecture of \cite{dals} and show it implies that all elements considered in \S\ref{hnase section} are `Weil-Stark elements' in the sense of \cite{bms1}. In \S\ref{integrality section} we combine this observation with results from both \S\ref{fitt ideal section} and \cite{bms1} to derive detailed arithmetic properties of Artin $\mathscr{L}$-invariants from the central conjecture of \cite{dals}. These results generalise (to non-abelian extensions and a wider range of Fitting invariants) the results on Selmer groups obtained by Kurihara, Sano and the first author \cite{bks}  and also suggest refinements and generalisations of a range of other results and conjectures (of multiple authors) in the literature (cf. Remarks \ref{dals improve4}, \ref{dals improve cor}, \ref{dals improve} and \ref{dals improve3}). Finally, in \S\ref{last section}, we derive a selection of explicit consequences of our results and, in particular, prove Theorems \ref{end result intro} and \ref{lastone2}.  

\subsection{Acknowledgements} It is a pleasure for the first author to thank Cornelius Greither, Dick Gross, Masato Kurihara, Alice Livingstone Boomla, Cristian Popescu and Takamichi Sano for encouragement and helpful discussions at various stages of this project. In addition, we are grateful to Alexandre Daoud, Henri Johnston, Andreas Nickel and, especially, an anonymous referee for very helpful comments on earlier versions of this article. 

The first author acknowledges funding from Yonsei University in support of a visit in 2019 which benefited this project. The second author acknowledges support for this article as part of Grants CEX2019-000904-S, PID2019-108936GB-C21 and PID2022-142024NB-I00 funded by MCIN/AEI/ 10.13039/501100011033. 
He also thanks the Isaac Newton Institute, and the organisers of the programme `$K$-theory, algebraic cycles and motivic homotopy theory', for support that helped advance this project.

\section{Non-commutative algebra}\label{n-c alg section}


We write $\zeta(R)$ for the centre of a ring $R$. By an $R$-module we mean a left $R$-module unless explicitly stated otherwise. We write $\mathbb{N}$ for the set of natural numbers and set $\mathbb{N}_0 := \mathbb{N}\cup \{0\}$. For $a \in \mathbb{N}$ we write $[a]$ for the set of integers $i$ with $1\le i\le a$.

\subsection{Whitehead orders}\label{2.1} Fix a Dedekind domain $\Lambda$ that is a subring of $\CC$ and has field of fractions $\mathcal{F}$. Let $\A$ be a $\Lambda$-order in a finite-dimensional separable $\mathcal{F}$-algebra $A$. 

\subsubsection{}For each prime ideal $\frp$ of $\Lambda$ we write $\Lambda_{(\frp)}$ for the localisation of $\Lambda$ at $\frp$. For each $\A$-module $M$ and each $\frp$ we then set $M_{(\frp)}:=\Lambda_{(\frp)}\otimes_\Lambda M$, endowed with natural actions of the algebra $\A_{(\frp)}=\Lambda_{(\frp)}\otimes_\Lambda \A$. 
A finitely generated $\A$-module $M$ will be said to be `locally-free' if, for all prime ideals $\frp$ of $\Lambda$, the $\A_{(\frp)}$-module $M_{(\frp)}$ is free. 

The rank ${\rm rk}_\A (M)$ of any such $M$ is a well-defined invariant (independent of the prime $\frp$ with respect to which it is computed). 
We also recall that a finitely-generated locally-free $\A$-module is always projective (see \cite[Prop. (8.19), Vol. I]{curtisr}). The converse of this statement also holds, by a fundamental result of Swan \cite{swan}, if $\A=\Lambda[G]$ for a finite group $G$ for which no prime divisor of $|G|$ is invertible in $\Lambda$ (cf. \cite[Th. (32.11), Vol. I]{curtisr}).

\subsubsection{}\label{Whitehead section}The `Whitehead order' of $\A$ is defined in \cite[Def. 3.1]{bses} via the intersection 
\begin{equation*}\label{Whiteheaddef}\xi(\A):={{{\bigcap}}}_{\frp\in {\rm Spec}(\Lambda)} \xi(\A_{(\frp)}),\end{equation*}
where $\xi(\A_{(\frp)})$ is the $\Lambda_{(\frp)}$-submodule of $\zeta(A)$ generated by $\{ {\rm Nrd}_A(M) : M \in \bigcup_{n\in\NN}{\rm M}_n(\A_{(\frp)})\}$, with ${\rm Nrd}_A$ the reduced norm over $A$. Then $\xi(\A)$ is a $\Lambda$-order in $\zeta(A)$,  $\xi(\A)=\A$ if and only if $\A$ is commutative, and  any surjective map of $\Lambda$-orders $\A\to\mathcal{B}$ induces, upon restriction, a surjective map $\xi(\A)\to\xi(\mathcal{B})$ (cf. \cite[Lem. 3.2]{bses}). In general, $\xi(\A)$ is neither contained in, nor contains, $\zeta(\A)$ but there exists an explicit ideal $\delta(\A)$ of $\zeta(\A)$ with $\delta(\A)\cdot \xi(\A) \subseteq \zeta(\A)$ (cf. \cite[Def. 3.6]{bses}). The links between the order $\xi(\A)$ and ideal $\delta(\A)$ and slightly different constructions of Johnston and Nickel \cite{JN} are discussed in \cite[Rem. 3.3 and 3.8]{bses}.

\subsection{Fitting invariants}\label{app higher fits2}
We fix a finite group $G$ and write $\Lambda$ for $\ZZ$ or $\ZZ_{(p)}$ for some $p$. 
\subsubsection{}
Let $M$ be a matrix in ${\rm M}_{d\times d'}(\QQ[G])$ with $d \ge d'$. Let $\{b_i\}_{i\in[d]}$ be the standard basis of $\Lambda[G]^d$. Then for any integer $t \in \{0\}\cup [d']$ and any $\varphi = (\varphi_i)_{1\le i\le t}$ in $\Hom_G(\Lambda[G]^{d},\Lambda[G])^t$ we write ${\rm Min}^{d'}_{\varphi}(M)$ for the set of all $d'\times d'$ minors of the matrices $M(J,\varphi)$ that are obtained from $M$ by choosing any $t$-tuple of integers $J = \{i_1,i_2,\cdots , i_t\}$ with $1\le i_1< i_2< \cdots < i_t\le d'$, and setting
\begin{equation}\label{fitting matrix} M(J,\varphi)_{ij} := \begin{cases} \varphi_{a}(b_i), &\text{if $j = i_a$ for $a \in [t]$}\\
                            M_{ij}, &\text{otherwise.}\end{cases}\end{equation}
For $a \in \mathbb{N}_0$ the `$a$-th (non-commutative) Fitting invariant of $M$' is the $\xi(\Lambda[G])$-ideal
\[ {\rm Fit}_{\Lambda[G]}^a(M) := \xi(\Lambda[G])\cdot \{{\rm Nrd}_{\QQ[G]}(N): N\in {\rm Min}^{d'}_{\varphi}(M), \, \varphi\in \Hom_{\Lambda[G]}(\Lambda[G]^{d},\Lambda[G])^t, \, t \le a\}.\]
%

\subsubsection{}Fix a finitely generated $\Lambda[G]$-module $Z$. Then a `(finite) free presentation' $h$ of $Z$ is an exact sequence of $\Lambda[G]$-modules 
 $\Lambda[G]^d \xrightarrow{\theta} \Lambda[G]^{d'} \xrightarrow{} Z \to 0$ for natural numbers $d$ and $d'$. The $a$-th Fitting invariant ${\rm Fit}_{\Lambda[G]}^a(h)$ of $h$ is  ${\rm Fit}_{\Lambda[G]}^a(M_\theta)$ with $M_\theta$ the matrix of $\theta$ with respect to the standard bases of $\Lambda[G]^d$ and $\Lambda[G]^{d'}$.

%
%

A `presentation' $h$ of $Z$ is an exact sequence of finitely generated $\Lambda[G]$-modules
\begin{equation}\label{pres seq} P' \xrightarrow{\theta} P \xrightarrow{} Z \to 0;\end{equation}
$h$ is said to be `locally-free', resp. `locally-quadratic', if $P$ and $P'$ are locally-free, resp. locally-free with ${\rm rk}_{\Lambda[G]}(P')={\rm rk}_{\Lambda[G]}(P)$. 
  If $\Lambda = \ZZ$ and $h$ is locally-free, then its localisation $h_{(p)}$ at every prime $p$ is a free presentation of the $\ZZ_{(p)}[G]$-module $Z_{(p)}$, and its $a$-th Fitting invariant is the $\xi(\ZZ[G])$-ideal  
\[ {\rm Fit}_{\ZZ[G]}^a(h) := {\bigcap}_{p}{\rm Fit}_{\ZZ_{(p)}[G]}^a(h_{(p)}).\]
%
%

%
%
The `$a$-th Fitting invariant' of a finitely generated $\Lambda[G]$-module $Z$ is the $\xi(\Lambda[G])$-ideal
\[ {\rm Fit}_{\Lambda[G]}^a(Z) := {{\sum}}_h {\rm Fit}^a_{\Lambda[G]}(h),\]
where in the sum $h$ runs over all locally-free presentations of finitely generated $\Lambda[G]$-modules $Z'$ for which there exists a surjective homomorphism of $\Lambda[G]$-modules $Z' \to Z$. 

\subsubsection{}\label{transpose section} An alternative theory of Fitting invariants is obtained if one replaces the matrices $M(J,\varphi)$ in (\ref{fitting matrix}) by $(M^{\rm tr})(J,\varphi)^{\rm tr}$. This version involves substituting rows (rather than columns) of $M$ by elements of $\im(\varphi_a)$. We write ${\rm Fit}_{\Lambda[G]}^{{\rm tr},a}(h)$ and ${\rm Fit}_{\Lambda[G]}^{{\rm tr},a}(Z)$ for the resulting ideals.  
To relate the two theories one defines the `transpose' $h^{\rm tr}$ of (\ref{pres seq}) to be the presentation 
$$\Hom_\Lambda(P,\Lambda) \xrightarrow{\Hom_\Lambda(\theta,\Lambda)} \Hom_\Lambda(P',\Lambda) \xrightarrow{} {\rm cok}(\Hom_\Lambda(\theta,\Lambda)) \to 0$$
of ${\rm cok}(\Hom_\Lambda(\theta,\Lambda))$ (with the linear duals endowed with contragredient action of $G$); this presentation is locally-free, respectively locally-quadratic, if and only if $h$ is locally-free, respectively locally-quadratic. Then, if $h$ is locally-quadratic, for every $a\in \mathbb{N}_0$, one has 
\begin{equation}\label{transpose equation}{\rm Fit}^{{\rm tr},a}_{\ZZ[G]}(h^{\rm tr}) = \iota_\#\!\bigl({\rm Fit}^a_{\ZZ[G]}(h)\bigr)\end{equation}
(cf. \cite[Lem. 3.24]{bses}). Again, $\iota_\#$ is the anti-involution of $\QQ[G]$ that inverts elements of $G$.

Finally we recall some general facts (from \cite[Th. 3.20]{bses}). If $G$ is abelian (so that $\xi(R[\Gamma]) = R[\Gamma])$, 
then ${\rm Fit}_{\Lambda[G]}^a(Z) = {\rm Fit}_{\Lambda[G]}^{{\rm tr},a}(Z)$ is equal to the classical $a$-th Fitting ideal of $Z$; $({\rm Fit}_{\Lambda[G]}^a(Z))_a$ and $({\rm Fit}_{\Lambda[G]}^{{\rm tr},a}(Z))_a$ are increasing sequences such that for all large enough $a$ 
\[ {\rm Fit}_{\Lambda[G]}^a(Z) = {\rm Fit}_{\Lambda[G]}^{{\rm tr},a}(Z) = \xi(\Lambda[G]);\]
the ideal $\delta(\Lambda[G])\!\cdot\!{\rm Fit}_{\Lambda[G]}^0(Z)= 
\delta(\Lambda[G])\!\cdot\!{\rm Fit}_{\Lambda[G]}^{{\rm tr},0}(Z)$ is contained in $\Lambda[G]$ and annihilates $Z$. 

\section{Fitting invariants of Selmer groups}\label{fitt ideal section}

We prove a new decomposition result for the higher Fitting invariants of Selmer groups. 
\subsection{Selmer groups}\label{selmer section} We recall basic facts concerning the Selmer groups of $\mathbb{G}_m$.

We henceforth fix a finite Galois extension $L/K$ of global fields and set
$G:= \Gal(L/K)$. We write $S^\infty_K$ for the set of archimedean places of $K$ (so $S_K^\infty\not=\emptyset$ if and only if $K$ is a number field) and $S_{L/K}^{\rm ram}$ for the set of places of $K$ that ramify in $L$. For any finite set of places $\Sigma$ of $K$ 
 we write $\Sigma_L$ for the
 set of places of $L$ lying above those in $\Sigma$, $Y_{L,\Sigma}$ for the free abelian group on the set $\Sigma_L$ and $X_{L,\Sigma}$ for the submodule of $Y_{L,\Sigma}$ comprising elements whose coefficients sum to zero. If $E$ is a subfield of $L$, we write $w_E$ for the restriction to $E$ of a place $w$ of $L$. If $S^\infty_K \subseteq \Sigma$, we write $\co_{L,\Sigma}$ for the subring of $L$ comprising
elements integral at all places outside $\Sigma_L$ and $\co_{L,\Sigma}^\times$ for the unit group of $\co_{L,\Sigma}$. We observe that the groups $X_{L,\Sigma}, Y_{L,\Sigma}, \co_{L,\Sigma}$ and $\co_{L,\Sigma}^\times$ are all stable under the natural action of $G$. 
%

We henceforth fix the following data: 
\begin{equation}\label{data fix}
\begin{cases}  &\text{a finite non-empty set of places $S$ of $K$ with $S^\infty_K\cup S^{\rm ram}_{L/K} \subseteq S$};\\
               &\text{a finite set $T$ of places of $K$ with $S\cap T=\emptyset$ and $\co^\times_{L,S,T}$ torsion-free,}\end{cases}
\end{equation}
where $\co_{L,S,T}^\times := \mathcal{O}_{L,S}^\times\cap L_T^\times$, with $L_T^\times := \{ a\in L^\times : {\rm ord}_w(a-1)>0 \text{ for all } w\in T_L \}$. 

We write ${\rm Cl}_S^T(L)$ for the quotient of the group of fractional $\mathcal{O}_L$-ideals
whose support is disjoint from $(S\cup T)_L$ by the subgroup
of principal ideals with a generator congruent to $1$ modulo all
places in $T_{L}$. We observe that ${\rm Cl}_S^T(L)$ is a $G$-module extension of ${\rm Cl}_S(L)$. 

The `($S$-relative $T$-trivialised) integral dual Selmer group for $\GG_m$ over $L$' is defined in \cite[Def. 2.1]{bks} (where it is denoted $\mathcal{S}_{S,T}(\GG_m/L)$) to be the cokernel $\mathcal{S}_S^T(L)$ of the injective map  
\begin{equation}\label{definingSelmer}
 {\prod}_{w \notin (S\cup T)_L}\ZZ \stackrel{\theta_{S,T}}{\longrightarrow} \Hom_\ZZ(L_T^\times,\ZZ) , 
\end{equation}
where $\theta_{S,T}$ sends $(x_w)_w$ to the map $(a \mapsto {\sum}_{w\notin S_L\cup T_L}{\rm ord}_w(a)x_w)$.

In the sequel we shall use the following facts concerning this module (all of which are proved in \cite[\S2]{bks}). 
There exists a canonical exact sequence
\begin{equation}\label{selmer lemma I} 0 \to {\rm Cl}_{S}^T(L)^\vee \to \mathcal{S}_S^T(L) \to \Hom_{\ZZ}(\mathcal{O}^\times_{L,S,T},\ZZ)\to 0\end{equation}
and a canonical transpose $\mathcal{S}_S^T(L)^{\rm tr}$ to $\mathcal{S}_S^T(L)$ (in the sense of Jannsen's homotopy theory of modules \cite{jannsen}) that lies in a canonical exact sequence
\begin{equation}\label{selmer lemma II} 0 \longrightarrow {\rm Cl}_{S}^T(L) \longrightarrow
\mathcal{S}_S^T(L)^{\rm tr}
\longrightarrow X_{L,S} \longrightarrow 0.\end{equation}
In (\ref{selmer lemma I}) and throughout the sequel, for any $G$-module $M$ we set $M^\vee:=\Hom_\ZZ(M,\QQ/\ZZ)$, endowed with the natural contragredient action of $G$. 

We finally recall that for each normal subgroup $H$ of $G$ and $E=L^H$, there exists a canonical isomorphism of $G/H$-modules
\begin{equation}\label{Seldescent}\ZZ[G/H]\otimes_{\ZZ[G]}\mathcal{S}_S^T(L)^{\rm tr} \cong \mathcal{S}_S^T(L^H)^{\rm tr}.\end{equation}

In the sequel, if $T= \emptyset$, then we freely omit it from all of the above notations. Similarly, if $K$ is a number field and $S = S^\infty_K$, then we will often omit it from all notations.
%

\subsection{Characters and idempotents}

%
 We abbreviate to ${\bf 1}_G$ or even ${\bf 1}$ the trivial character of $G$ and write $\check\psi$ for the contragredient of a character $\psi$. The primitive central idempotent of $\CC[G]$ at $\psi$ is $e_\psi:=(\psi(1)/|G|){\sum}_{g\in G}\psi(g)g^{-1}$. For a normal subgroup $H$ of $G$ we set $T_H := {\sum}_{h \in H}h \in \ZZ[G]$ and we write $e_H$ for the central idempotent $|H|^{-1}\cdot T_H$ of $\QQ[G]$.


For $a \in \mathbb{N}_0$, we then write $\widehat G'_{a,S}$ for the (possibly empty) subset of $\widehat{G} \setminus \{{\bf 1}\}$ comprising characters $\psi$ that satisfy both of the following conditions
\begin{equation}\label{new def} \begin{cases} &(\text{i})\,\,\,\text{the set} \,\,\, S_\psi:= \{v \in S: \psi(T_{G_v}) = |G_v|\cdot \psi(1)\}\,\,\, \text{has cardinality $a$, and} \\
                 &(\text{ii})\,\,\,\psi(T_{G_v}) = 0\,\,\,\text{ for all } \,\,\, v \in S\setminus S_\psi.\end{cases}\end{equation}
Here $G_v$ denotes the decomposition subgroup in $G$ of any fixed place of $L$ above $v$.
We then define $\widehat G_{a,S}$ to be $\widehat G_{a,S}' \cup \{{\bf 1}\}$ if $a=|S|-1$ and to be $\widehat G_{a,S}'$ if $a\not= |S|-1$.

We also consider the (possibly empty) set of characters 
\begin{equation*}\label{new def2} \widehat G'_{(a),S} := \{\psi \in \widehat{G}\setminus \{{\bf 1}\}: |S_\psi| \ge a \},\end{equation*}
and define $\widehat G_{(a),S}$ to be $\widehat G_{(a),S}' \cup \{{\bf 1}\}$ if $a < |S|$ and to be $\widehat G_{(a),S}'$ if $a \ge |S|$.

We then obtain central idempotents of $\QQ[G]$ by setting
\[ e_{a,S} := {\sum}_{\psi \in \widehat G_{a,S}} e_\psi\,\,\,\,\text{ and }\,\,\,\, e_{(a),S} := {\sum}_{\psi \in \widehat G_{(a),S}}e_\psi. \]
%
We also define a subset of $S$ by setting

\[ S^a_{\rm min} := \begin{cases} \bigcup_{\psi\in \widehat G'_{a,S}} S_\psi, &\text{if $G$ is not trivial,}\\
                                  S\setminus \{v_0\}, &\text{if $G$ is trivial and $a = |S|-1$},\\
                                  \emptyset, &\text{if $G$ is trivial and $a \not= |S|-1$},\end{cases}\]
where $v_0$ is a fixed place in $S$ (the choice of which will not matter in the sequel).

For any set $\Sigma$ and $a\in \mathbb{N}_0$ we write $\wp_a(\Sigma)$ for the set of subsets of $\Sigma$ of cardinality $a$. We abbreviate $\wp_a(S_{\rm min}^a)$ to  $\wp^\ast_a(S)$ and for $v$ in $S$ we set 
\[ \wp_a(S,v) := \{I\in \wp^\ast_a(S): v \notin I\}.\]

Finally, for each $I$ in $\wp^\ast_a(S)$ we define a central idempotent of $\QQ[G]$ by setting
\begin{equation}\label{def eI} e_I := e_{\bf 1} + {\sum}_{\{\psi \in \widehat G'_{a,S}: S_\psi = I\}} e_\psi.\end{equation}
%

\begin{remark}\label{na comms}{\em \

(i) To interpret (\ref{new def})(i) more explicitly, note that for $\psi$ in $\widehat G\setminus \{{\bf 1}\}$ one has $\psi(T_{G_v}) = |G_v|\cdot \psi(1)$ if and only if $G_v$, and hence also its normal closure, is contained in $\ker(\psi)$.

(ii) If $\psi$ in $\widehat G\setminus \{{\bf 1}\}$ is linear, then $\psi\in \widehat G'_{|S_\psi|,S}$ since, in this case, the validity of (\ref{new def})(ii) follows from the definition of $S_\psi$. Thus, if $G$ is abelian, then $\widehat G = \bigcup _{a'\in \mathbb{N}_0}\widehat G_{a',S}$ and there are also non-abelian extensions for which the same is true. For example, if $L/K$ is generalised dihedral, with $A$ the abelian subgroup of index two in $G$, 
and all places in $S$ split in $L^A/K$, then for every $\phi$ in $\widehat A\setminus \{{\bf 1}\}$ one has ${\rm Ind}_A^G(\phi) \in \widehat G'_{a,S}$ with $a = |\{v \in S: G_v \subseteq \ker(\phi)\}|$.}

{\em (iii) Set $V := \{v \in S: G_v \,\text{ is trivial}\}$ and $r_{\rm sp} := |V|$. Then $V \subseteq S_\psi$ for all $\psi$ in $\widehat G\setminus \{{\bf 1}\}$ and so $\widehat G_{a,S}= \emptyset$ if $a< r_{\rm sp}$ and $\widehat G_{(a),S} = \widehat G$ if $a\le r_{\rm sp}$. In particular, if any $\widehat G'_{a,S}$ with $a > 0$ contains a faithful character of $G$, then $a=r_{\rm sp}$, $S^a_{\rm min} = \Sigma$ and $\widehat G = \widehat G_{(a),S}$. (In this regard, we note Schur's Lemma implies no element of $\widehat G$ can be faithful if $G$ has a non-cyclic centre and that a complete classification of groups with  faithful irreducible characters has been given by Gaschutz in \cite{Gaschutz}).}\end{remark}

%

\subsection{The decomposition result}\label{decompo section} 

For each place $v$ in $S$ we write $N_v$ for the normal closure of $G_v$ in $G$ and
 abbreviate the central idempotent $e_{N_v}$ to $e_v$. 

\begin{theorem}\label{main result alg} 
Fix sets $S$ and $T$ as in (\ref{data fix}) and a place $v\in S$. Then the $G$-module $\mathcal{S}^T_S(F)$ has a locally-quadratic presentation $h=h_v$ with the property that,
for every $a\in \mathbb{N}_0$, there is a direct sum decomposition of $\xi(\ZZ[G])$-modules
\begin{equation}\label{decomp} (1-e_v+e_{\bf 1})e_{(a),S}\cdot{\rm Fit}_{\ZZ[G]}^{{\rm tr},a}(h) = c^a_{S,v}e_{\bf 1}\cdot{\rm Fit}_{\ZZ[G]}^{{\rm tr},a}(h)\oplus {\bigoplus}_{I\in \wp_a(S,v)} e_I\cdot{\rm Fit}_{\ZZ[G]}^{{\rm tr},a}(h).\end{equation}
Here we set $c^{a}_{S,v}:=0$ if $a\geq|S|$ and, otherwise, we use the integer 
\[ c^a_{S,v} := |\{v\}\setminus S_{\rm min}^a| - {\rm min}\{|S_{\rm min}^a|, |S\setminus S_{\rm min}^a|\} - \delta_{a0}.\]
The same claim is also true if one replaces $\mathcal{S}^T_S(F)$ and ${\rm Fit}_{\mathbb{Z}[G]}^{{\rm tr},a}(-)$ by $\mathcal{S}^T_S(F)^{\rm tr}$ and 
${\rm Fit}_{\mathbb{Z}[G]}^{a}(-)$. 
\end{theorem}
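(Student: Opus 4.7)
The plan is to exhibit a specific locally-quadratic presentation $h_v$ of $\mathcal{S}_S^T(L)$ tailored to the chosen place $v$, and then to read off the decomposition (\ref{decomp}) by analysing how the central idempotent $e_v = e_{N_v}$ interacts with the primitive central idempotents $e_\psi$ of $\mathbb{Q}[G]$. For the construction of $h_v$, I would splice a locally-free presentation of $\mathrm{Cl}_S^T(L)$ with the canonical exact sequence (\ref{selmer lemma II}) and with the tautological sequence $0\to X_{L,S}\to Y_{L,S}\to \mathbb{Z}\to 0$, then use the orthogonal decomposition $Y_{L,S}=Y_{L,\{v\}}\oplus Y_{L,S\setminus\{v\}}$ so that the resulting boundary matrix has a distinguished block corresponding to $Y_{L,\{v\}}$ on which $e_v$ acts as the identity. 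Applying (\ref{transpose equation}) then transfers a quadratic presentation of $\mathcal{S}_S^T(L)^{\mathrm{tr}}$ obtained from (\ref{selmer lemma II}) to one of $\mathcal{S}_S^T(L)$; the locally-quadratic hypothesis is verified using the Dirichlet regulator isomorphism (\ref{lambdadef}), which matches the $\mathbb{Q}[G]$-ranks of $\mathcal{O}_{L,S,T}^\times$ and $X_{L,S}$.

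The second step is to localise the computation of $\mathrm{Fit}^{\mathrm{tr},a}_{\mathbb{Z}[G]}(h_v)$ at each primitive central idempotent $e_\psi$. Two facts will be crucial here. Firstly, $e_v e_\psi=e_\psi$ when $N_v\subseteq\ker\psi$, equivalently $v\in S_\psi$, and $e_v e_\psi=0$ otherwise; this follows from the normality of $N_v$ in $G$ together with the fact that an irreducible character is either trivial or totally non-trivial on the fixed subspace of any normal subgroup. Secondly, for $\psi\in\widehat G_{(a),S}$ with $|S_\psi|>a$, a rank count using the Dirichlet isomorphism shows that the $\psi$-component of $\mathcal{S}_S^T(L)^{\mathrm{tr}}$ has $\psi$-rank strictly greater than $a$, whence $e_\psi\cdot\mathrm{Fit}^{\mathrm{tr},a}_{\mathbb{Z}[G]}(h_v)=0$ by the standard vanishing property of higher Fitting invariants recorded at the end of \S\ref{app higher fits2}. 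Consequently the left-hand side of (\ref{decomp}) is supported on $\widehat G'_{a,S}$ together with the trivial character when it belongs to $\widehat G_{(a),S}$.

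Thirdly, I would assemble the surviving contributions according to $S_\psi$. By the first fact above, the operator $1-e_v+e_{\mathbf 1}$ acts as $e_\psi$ on the $\psi$-component when $v\notin S_\psi$, and as $e_{\mathbf 1}e_\psi$ otherwise. Grouping by $S_\psi=I$ and using the definition (\ref{def eI}) of $e_I$, the characters $\psi\neq\mathbf 1$ with $S_\psi\in\wp_a(S,v)$ contribute precisely the summands $e_I\cdot\mathrm{Fit}^{\mathrm{tr},a}_{\mathbb{Z}[G]}(h_v)$. The remaining $\mathbf 1$-isotypic part is a multiple of $e_{\mathbf 1}\cdot\mathrm{Fit}^{\mathrm{tr},a}_{\mathbb{Z}[G]}(h_v)$; a careful bookkeeping of the number of times $e_{\mathbf 1}$ has been absorbed into the various $e_I$ (controlled by the cardinalities of $S_{\mathrm{min}}^a$ and $S\setminus S_{\mathrm{min}}^a$ and by whether $v\in S_{\mathrm{min}}^a$), together with a boundary correction at $a=0$ reflecting whether $\mathbf 1$ itself enters $\widehat G_{(a),S}$, then produces the integer $c^a_{S,v}$ in the displayed formula. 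The analogous statement for $\mathcal{S}_S^T(L)^{\mathrm{tr}}$ and $\mathrm{Fit}^a$ follows by applying (\ref{transpose equation}) to $h_v$ and invoking the fact that transposing preserves the locally-quadratic property.

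The main obstacle I anticipate is two-fold. First, the presentation $h_v$ must be built with enough structural control that the idempotent decomposition is an equality of $\xi(\mathbb{Z}[G])$-modules and not merely an inclusion; this forces a careful choice of the block form of the boundary matrix, together with an argument ensuring that every reduced-norm minor that could contribute on the right-hand side genuinely appears on the left via the construction (\ref{fitting matrix}). Second, pinning down the exact integer $c^a_{S,v}$ requires a delicate combinatorial count: the idempotents $e_I$ share the common summand $e_{\mathbf 1}$, so one must reconcile the resulting multi-counting with the $\min\{|S_{\mathrm{min}}^a|,|S\setminus S_{\mathrm{min}}^a|\}$ and $\delta_{a0}$ corrections in the definition of $c^a_{S,v}$, paying particular attention to the small-$a$ boundary cases.
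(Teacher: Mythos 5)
Your overall framework is sound -- construct a presentation $h_v$ with a boundary matrix whose rows are aligned with the places in $S\setminus\{v\}$, then analyse component-by-component at each primitive central idempotent $e_\psi$ -- and this matches the structure of the paper's argument (which also uses a basis satisfying the conditions in (\ref{extrabasisproperties}), Lemma \ref{fit vanishing} to kill the $\psi$-components with $|S_\psi|>a$, and the idempotent identity (\ref{useful idem relation})). However, there is a genuine gap at the step you yourself flag as the ``main obstacle'': you observe that the equality of ideals must be proved, not merely an inclusion, but the argument you sketch for this (``ensuring that every reduced-norm minor that could contribute on the right-hand side genuinely appears on the left'') is both incomplete and mis-stated. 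The actual obstruction is not a surjectivity question but a \emph{vanishing} result: after writing $(1-e_v)e_{a,S}=\sum_{I\in\wp_a(S,v)}(e_I-e_{\mathbf 1})$ and expanding $\mathcal{F}'_{a,(p)}=\sum_{J\in\wp_a(d)}m_p(J)$, one needs to show that for every $\psi\in\widehat G'_{a,S}$ with $S_\psi=I$ and every $J\neq I$, one has $e_\psi\cdot m_p(J)=0$. This is precisely Lemma \ref{lemma one} in the paper, and it is not an automatic idempotent manipulation: it uses in an essential way the basis property (\ref{extrabasisproperties}) to show that, after projection to $\ZZ_{(p)}[G_\psi]$, the $k$-th column of the boundary matrix must vanish whenever $v_k\in S_\psi\setminus J$, because $v_k$ splits completely in $L^\psi/K$ and $q_\psi(\mu_{i,k})\cdot w_{k,L^\psi}$ must vanish in the free module $Y_{L^\psi,S_0,(p)}$. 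Without this lemma, $\bigl(\sum_I\epsilon_I\bigr)\cdot\mathcal{F}'_a$ is only contained in $\sum_I\epsilon_I\cdot\mathcal{F}'_a$, and the decomposition (\ref{decomp}) is an inclusion rather than an equality.

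A second, smaller gap is that the verification of the precise integer $c^a_{S,v}$ cannot be done by the ``bookkeeping of absorbed $e_{\mathbf 1}$'s'' you describe in the main range $0<a<|S|-1$, because there Lemma \ref{fit vanishing}(ii) forces $e_{\mathbf 1}\cdot\mathcal{F}_a=0$ and the $e_{\mathbf 1}$-summand contributes nothing at all; the formula for $c^a_{S,v}$ only genuinely matters when $a\in\{0,|S|-1,|S|\}$, and those cases require the separate case-by-case analyses carried out in Lemma \ref{second step} and Proposition \ref{first step} (distinguishing $S^a_{\rm min}=\emptyset$, $|S^a_{\rm min}|=|S|-1$ with $v$ in or out of $S^a_{\rm min}$, and $S^a_{\rm min}=S$). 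Your proposal does not address these boundary cases, and they cannot be deduced from the argument in the main range.
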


\begin{remark}{\em If $G$ is abelian, then this result recovers the main algebraic result (Theorem 1.1) of Livingstone Boomla and the first author \cite{dbalb}. In addition, an earlier version of the result for non-abelian $G$ was first obtained in unpublished work of the same authors and we are grateful to Livingstone Boomla for her permission to include it here. }\end{remark}

The proof of Theorem \ref{main result alg} will occupy the rest of this section. Throughout the argument, for 
%
%
%
%
%
%
$a \in \mathbb{N}_0$ we set 
\[ \mathcal{F}_a := {\rm Fit}_{\ZZ[G]}^a(\mathcal{S}^T_S(L)^{\rm tr}).\]

\subsubsection{} We first record a result that will allow us to treat a special case of Theorem \ref{main result alg}. 

\begin{lemma}\label{fit vanishing} The following claims are valid for each $a\in \mathbb{N}_0$.

\begin{itemize}
\item[(i)] For $\psi$ in $\widehat G_{(a),S}\setminus \widehat G_{a,S}$ the space $e_\psi (\CC\otimes_\ZZ \mathcal{F}_a)$ vanishes.
\item[(ii)] If $|S|> a+1$, then $e_{\bf 1}\cdot \mathcal{F}_a$ vanishes.
\item[(iii)] If $v \in S\setminus S^a_{\rm min}$, then $(e_v-e_{\bf 1})e_{(a),S}\cdot\mathcal{F}_a$ vanishes.
\end{itemize}
\end{lemma}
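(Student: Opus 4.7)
The plan is to reduce all three assertions to a single rank-based vanishing principle for non-commutative Fitting invariants, combined with an explicit computation of the $\psi$-isotypic multiplicities of the rationalised Selmer group. Claims (i) and (ii) will follow directly from this principle (with (ii) essentially being the $\psi={\bf 1}$ case of (i)), and (iii) will then fall out as a formal consequence of (i).

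The key criterion I would establish (or extract from \cite{bses}) is the following: for any finitely generated $\ZZ[G]$-module $Z$ and any $\psi\in\widehat G$, if $\langle\chi_{\CC\otimes_\ZZ Z},\psi\rangle>a\cdot\psi(1)$, then $e_\psi(\CC\otimes_\ZZ{\rm Fit}^a_{\ZZ[G]}(Z))=0$. To verify this, take any locally-free presentation $P'\xrightarrow{\theta}P\to Z'\to 0$ with a surjection $Z'\twoheadrightarrow Z$; the $\psi$-multiplicity of $\CC\otimes\cok(\theta)$ is at least that of $\CC\otimes Z$ and so strictly exceeds $a\cdot\psi(1)$. Since each map $\varphi_i\in\Hom_{\ZZ[G]}(P',\ZZ[G])$ has image of $\psi$-multiplicity at most $\psi(1)$, substituting $t\le a$ columns of $M_\theta$ by the corresponding $\varphi$-columns alters the $\psi$-isotypic cokernel by a module of $\psi$-multiplicity at most $t\cdot\psi(1)\le a\cdot\psi(1)$ (a short exact-sequence argument comparing $\im(\theta)$ and $\im(\theta')$ via projection onto the unchanged coordinates gives the precise bound). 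The $\psi$-isotypic cokernel of the modified map therefore remains non-trivial, so every $d'\times d'$ submatrix of $M(J,\varphi)$ acts non-surjectively on the $\psi$-isotypic component of $\CC[G]^{d'}$, whence its reduced norm has trivial image under $e_\psi$.

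Combining (\ref{selmer lemma II}) with the finiteness of $\Cl_S^T(L)$ gives $\CC\otimes_\ZZ\mathcal{S}_S^T(L)^{\rm tr}\cong \CC\otimes_\ZZ X_{L,S}$, and a standard Frobenius-reciprocity calculation then yields
\[\langle\chi_{\CC\otimes_\ZZ X_{L,S}},\psi\rangle\,=\,\sum_{v\in S}\frac{\psi(T_{G_v})}{|G_v|}-\delta_{\psi,{\bf 1}},\]
where each summand is bounded above by $\psi(1)$ and attains $\psi(1)$ precisely when $v\in S_\psi$ (cf. Remark \ref{na comms}(i)). For $\psi={\bf 1}$ this expression equals $|S|-1$, so the criterion yields (ii) exactly when $|S|>a+1$. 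For $\psi\in\widehat G_{(a),S}\setminus\widehat G_{a,S}$ with $\psi\neq{\bf 1}$, the hypothesis $|S_\psi|\ge a$ together with $\psi\notin\widehat G'_{a,S}$ forces either $|S_\psi|\ge a+1$ or else the existence of some $v\notin S_\psi$ with $\psi(T_{G_v})\neq 0$; in either case $\sum_v\psi(T_{G_v})/|G_v|>a\cdot\psi(1)$, and the criterion delivers (i).

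Finally, for (iii), the idempotent $(e_v-e_{\bf 1})e_{(a),S}$ is the sum of $e_\psi$ over non-trivial $\psi\in\widehat G_{(a),S}$ with $N_v\subseteq\ker\psi$, equivalently with $v\in S_\psi$. The assumption $v\notin S^a_{\rm min}=\bigcup_{\psi'\in\widehat G'_{a,S}}S_{\psi'}$ excludes any such $\psi$ from $\widehat G'_{a,S}$ (otherwise $v\in S_\psi\subseteq S^a_{\rm min}$, a contradiction), so every such $\psi$ lies in $\widehat G_{(a),S}\setminus\widehat G_{a,S}$, and part (i) applied one $\psi$-component at a time delivers the desired vanishing as an identity in $\zeta(\QQ[G])$. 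The principal obstacle is pinning down the rank criterion rigorously under the substitution-of-columns convention adopted in \S\ref{app higher fits2}, where one must track the relationship between reduced norms and matrix representations on each $\psi$-isotypic component; the character computation and the combinatorial passage from (i) to (iii) are then essentially routine.
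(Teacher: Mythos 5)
Your proof is correct and follows essentially the same route as the paper's. Both arguments hinge on the same rank-based vanishing criterion for the $\psi$-component of a non-commutative Fitting invariant (that the multiplicity of $\psi$ in $\CC\otimes Z$ exceeding $a\cdot\psi(1)$ forces $e_\psi\bigl(\CC\otimes{\rm Fit}^a_{\ZZ[G]}(Z)\bigr)=0$, via the rank drop of the unfolded matrix after at most $a$ block-column substitutions), then compute the $\psi$-multiplicity of $\CC\otimes\mathcal{S}_S^T(L)^{\rm tr}\cong\CC\otimes X_{L,S}$ as $\sum_{v\in S}\dim H^0(G_v,V_\psi)-\delta_{\psi,\bf 1}$ via (\ref{selmer lemma II}) and Frobenius reciprocity, verify the strict inequality for $\psi\in\widehat G_{(a),S}\setminus\widehat G_{a,S}$ by splitting into the cases $|S_\psi|>a$ and $|S_\psi|=a$ with (\ref{new def})(ii) failing, and finally obtain (iii) by showing that the idempotent $(e_v-e_{\bf 1})e_{(a),S}$ is supported on exactly those non-trivial $\psi\in\widehat G_{(a),S}$ with $v\in S_\psi$, all of which fall outside $\widehat G'_{a,S}$ because $v\notin S^a_{\rm min}$. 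Your explicit bound $\langle\chi_{\CC\otimes Z},\psi\rangle>a\cdot\psi(1)$ matches what the rank computation actually requires, and your deduction of (ii) as the $\psi={\bf 1}$ instance and of (iii) as a formal consequence of (i) mirrors the paper's logic exactly.
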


\begin{proof} We claim first that for $\psi$ in $\widehat G$, $a \in \mathbb{N}_0$ and a finitely generated $G$-module $Z$ the sublattice $e_\psi\cdot {\rm Fit}_{\ZZ[G]}^a(Z)$ of $\zeta(\CC[G])$ will vanish whenever $n_{\psi}(Z)> a\cdot\psi(1)$, where we set $n_\psi(Z) := {\rm dim}_{\CC}(e_\psi (\CC\otimes_\ZZ Z))$.

It is enough to prove this after localising at any given prime $p$ and to do this we fix a free presentation $\Pi$ of a finitely generated $\ZZ_{(p)}[G]$-module $Z'$ for which there exists a surjective homomorphism of $\ZZ_{(p)}[G]$-modules $Z' \to Z$.
Let $M_\Pi$ be a matrix in ${\rm M}_{d\times d'}(\CC[G])$ for integers $d$ and $d'$ with $d\ge d' \ge a$, that represents $\Pi$ and let $M'$ be any matrix that is obtained from $M_\Pi$ by changing at most $a$ of its columns. For any $\psi$ in $\widehat G$ write $M_{\Pi,\psi}$ and
$M'_\psi$ for the matrices in ${\rm M}_{d\cdot\psi(1)\times d'\cdot\psi(1)}(\CC)$ that are respectively obtained by splitting the matrices $e_\psi\cdot  M_\Pi$ and $e_\psi\cdot M'$ in ${\rm M}_{d\times d'}(e_\psi\cdot\CC[G])$.

Then the column rank of $M_{\Pi,\psi}$ is $d'\cdot\psi(1) - n_\psi(Z')$ and so the column rank of
  $M'_\psi$ is at most $d'\cdot \psi(1) - n_{\psi}(Z') + a\cdot\psi(1)$. In particular, if $n_\psi(Z) > a\cdot\psi(1)$, then also $n_{\psi}(Z') > a\cdot\psi(1)$ and so the column rank of $M'_\psi$ is strictly less than $d'\cdot \psi(1)$. This implies that the determinant of any
  $d'\cdot \psi(1)\times d'\cdot \psi(1)$ minor of $M'_\psi$ vanishes and so, since $e_\psi\cdot {\rm Fit}_{\ZZ[G]}^a(Z)$ is generated by the determinants of all such minors (see the definition in \cite[\S 3]{bses}), it must also vanish, as claimed. Using this observation, and the fact that for any non-trivial $\psi$ in $\widehat G_{(a),S}\setminus \widehat G_{a,S}$ there exists a place $v$ in $S\setminus S_\psi$ for which $H^0(G_v,V_{\psi})$ does not vanish, one derives claim (i) by noting the exact sequence (\ref{selmer lemma II}) implies that 
\[
 n_\psi(\mathcal{S}^T_S(L)^{\rm tr}) = \begin{cases} {\sum}_{v \in S}{\rm dim}_\CC(H^0(G_v,V_\psi)) > {\sum}_{v \in S_\psi}{\rm dim}_\CC(H^0(G_v,V_\psi))\\
 \hskip 1.65truein = {\sum}_{v \in S_\psi}{\rm dim}_\CC(V_\psi) = a\cdot \psi(1), &\text{if $\psi \not= {\bf 1}$,}\\
 |S|-1 > a = a\cdot\psi(1), &\text{if $\psi = {\bf 1}$,}\end{cases}\]
where the second equality in the case $\psi \not= {\bf 1}$ follows from Remark \ref{na comms}(i).

Claim (ii) is then an immediate consequence of claim (i) in the case $\psi = {\bf 1}$. 
 To derive claim (iii) from claim (i) it suffices to prove that if $\psi$ belongs to $\widehat {G}_{a,S}\setminus \{{\bf 1}\}$, then $N_v$ cannot be contained in $\ker(\psi)$ (and hence $e_\psi(e_v-e_{\bf 1}) = 0$). This follows from the fact that, as $v$ does not belong to $S^a_{\rm min}$, the inclusion $N_v\subseteq \ker(\psi)$ would imply that $|S_\psi| \ge 1 + a$.
\end{proof}

\subsubsection{}\label{special cases} We now fix a place $v=v_0$ in $S$ and a place $w_0$ of $L$ above $v_0$. We set $n:= |S|-1$, label (and thereby order) the elements of $S$ as
$\{v_0\}\cup \{ v_i \}_{i \in [n]}$ and write $S_0$ for $S\setminus \{v_0\}$.

For $i\in [n]$ we now fix a place $w_i$ of $L$ above $v_i$ and for each intermediate field $E$ of $L/K$ write $w_{i,E}$ for the place of $E$ obtained by restriction of $w_i$ (so $w_{i,L} = w_i$).
We consider the canonical composite surjective homomorphism $ \varrho_{L,S}: \mathcal{S}_S^T(L)^{\rm tr} \to X_{L,S} \to Y_{L,S_0},$ 
where the first arrow is the canonical homomorphism occurring in (\ref{selmer lemma II}) and the second arrow maps $w-w_0$ to $0$ if $v_0$ divides $w$ and to $w$ otherwise.


Any exact sequence in claim (ii) of \cite[Prop. 5.6]{bms1} (in the case $\Pi=\ZZ[G]$) defines a locally-quadratic presentation of $G$-modules 
\begin{equation*}\label{fixed h}P^0\stackrel{\phi}{\to}P\stackrel{\varpi}{\to}\mathcal{S}_S^T(L)^{\rm tr}\to 0\end{equation*}
in which $P$ is free of finite rank $d>n+1$. In fact, a slightly more general version of claim (iii) of \cite[Prop. 5.6]{bms1} (see \cite[\S 5.4]{bks} or \cite[Lem. 6.15]{bses2}) gives a presentation $\tilde h=\tilde h_{v}$ as above together with an ordered $G$-basis $\widehat{\underline{b}}=\{\hat b_i\}_{i \in [d]}$ of $P$ for which
\begin{equation}\label{extrabasisproperties}\varrho_{L,S}(\varpi(\hat b_i))=\begin{cases}w_{i},\,\,\,\,\,\,\,\,\,\,\text{if }i\in[n],\\ 0,\,\,\,\,\,\,\,\,\,\,\,\,\,\,\text{if }i\in[d]\setminus[n].\end{cases}\end{equation}

\begin{remark}\label{asinLY}{\em The presentation $\tilde h$ constructed above has two useful properties. \

\noindent{}(i) Fix a normal subgroup $H$ of $G$. Then, with respect to the identification (\ref{Seldescent}) , the $H$-coinvariance $\tilde h_H$ of $\tilde h$ gives a locally-quadratic presentation of the $\ZZ[G/H]$-module $\mathcal{S}_S^T(L^H)^{\rm tr}$ that satisfies (\ref{extrabasisproperties}) after replacing $L$ and each $\hat b_i$ and $w_i$ by $L^H$, the image of $\hat b_i$ in $P_H$ and $w_{i,L^H}$ respectively. In particular, with respect to the identification $e_H\cdot\zeta(\QQ[G]) = \zeta(\QQ[G/H])$ one has $ e_H\cdot {\rm Fit}^a_{\ZZ[G]}(\tilde h) = {\rm Fit}^a_{\ZZ[G/H]}(\tilde h_H)$ for all $a \in \mathbb{N}_0$, and the proof of Theorem \ref{main result alg} shows that $\tilde h_H$ satisfies the equality (\ref{decomp}) relative to the extension $L^H/K$.

\noindent{}(ii) The definition of the transpose Selmer group $\mathcal{S}_S^T(L)^{\rm tr}$ in \cite{bks} ensures that the transpose $\tilde h^{\rm tr}$ of $\tilde h$ (in the sense of \S\ref{transpose section}) is a locally-quadratic presentation of $\mathcal{S}_S^T(L)$. This combines with the general equality (\ref{transpose equation}), and the fact that all idempotents occurring in the statement of Theorem \ref{main result alg} are fixed by $\iota_\#$, to reduce the proof of Theorem \ref{main result alg} (with $h = \tilde h^{\rm tr}$) to the proof of (\ref{decomp}) with each term ${\rm Fit}_{\ZZ[G]}^{{\rm tr},a}(h)$ replaced by ${\rm Fit}^a_{\ZZ[G]}(\tilde h)$.  
}\end{remark}

\subsubsection{} We set $\mathcal{F}'_a:={\rm Fit}^a_{\ZZ[G]}(\tilde h)\subseteq\mathcal{F}_a$. By using Lemma \ref{fit vanishing}, and Remark \ref{asinLY}(ii), we can now quickly prove Theorem \ref{main result alg} in several special cases. 

\begin{lemma}\label{second step} Theorem \ref{main result alg} is valid if either $F=k$ or $a=0$.
\end{lemma}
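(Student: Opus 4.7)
The plan is first to use Remark \ref{asinLY}(ii) to reduce the claim to the equality of $\xi(\ZZ[G])$-modules obtained by replacing each occurrence of ${\rm Fit}^{{\rm tr},a}_{\ZZ[G]}(h)$ in (\ref{decomp}) by $\mathcal{F}'_a = {\rm Fit}^a_{\ZZ[G]}(\tilde h)$. The overall strategy is then to combine the vanishing assertions of Lemma \ref{fit vanishing} with an explicit determination of the combinatorial objects $S^a_{\rm min}$, $\wp_a(S,v)$, $c^a_{S,v}$ and of the idempotents $e_v, e_{\bf 1}, e_I, e_{(a),S}$ in each of the two special cases.

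For the case $F = K$, so that $G$ is trivial, I would exploit that $\xi(\ZZ[G])=\ZZ$ and that every central idempotent $e_\psi, e_v, e_I$ equals $0$ or $1$, which in particular makes $1-e_v+e_{\bf 1}=1$. I would then split into three sub-cases in $a$: when $a \ge |S|$, both $\widehat G_{(a),S}$ and $\wp_a^\ast(S)$ are empty and $c^a_{S,v}=0$, so both sides of (\ref{decomp}) vanish; when $a < |S|-1$, Lemma \ref{fit vanishing}(ii) together with $e_{\bf 1}=1$ forces $\mathcal{F}'_a=0$, so again both sides vanish; finally, when $a = |S|-1$ one has $S^a_{\rm min}=S\setminus\{v_0\}$, $\wp_a^\ast(S)=\{S\setminus\{v_0\}\}$ and $e_{S\setminus\{v_0\}}=1$, and I would conclude by enumerating the value of $c^a_{S,v}$ in each of the sub-cases $v=v_0$ and $v\ne v_0$ to verify that both sides reduce to $\mathcal{F}'_{|S|-1}$ as $\ZZ$-modules.

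For the case $a = 0$ (with arbitrary $G$), the definition (\ref{new def}) forces $S_\psi=\emptyset$ for every $\psi \in \widehat G'_{0,S}$, whence $S^0_{\rm min}=\emptyset$; this immediately gives $\wp^\ast_0(S)=\wp_0(S,v)=\{\emptyset\}$ and $c^0_{S,v}=1-0-1=0$, so the right-hand side of (\ref{decomp}) reduces to $e_\emptyset \cdot \mathcal{F}'_0$. For the left-hand side, I would invoke Lemma \ref{fit vanishing}(iii) (applicable since $v\notin S^0_{\rm min}$) to discard the $(e_v-e_{\bf 1})$-contribution, reducing it to $e_{(0),S}\cdot\mathcal{F}'_0$; Lemma \ref{fit vanishing}(i) then kills all $e_\psi$-components indexed by $\psi\in\widehat G_{(0),S}\setminus\widehat G_{0,S}$, while, when $|S|>1$, Lemma \ref{fit vanishing}(ii) removes the $e_{\bf 1}$-component. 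Comparison with the explicit expression $e_\emptyset = e_{\bf 1} + {\sum}_{\psi \in \widehat G'_{0,S}} e_\psi$ then delivers the desired identity.

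The main delicacy I anticipate is the bookkeeping in the sub-case $a=|S|-1$ of the trivial-extension case, where $c^a_{S,v}$ takes different (and possibly negative) integer values depending on whether $v$ coincides with the auxiliary place $v_0$; this must be reconciled with the simultaneous appearance or non-appearance of the unique indexing set $I=S\setminus\{v_0\}$ in $\wp_a(S,v)$ so that the two sides agree as $\xi(\ZZ[G])$-modules.
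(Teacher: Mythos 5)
Your argument for the case $a=0$ with $G$ non-trivial matches the paper's proof precisely: you invoke the same vanishing statements from Lemma \ref{fit vanishing} to reduce the left side to $e_{0,S}\cdot\mathcal{F}'_0$, compute $S^0_{\rm min}=\emptyset$, $c^0_{S,v}=0$ and $\wp_0(S,v)=\{\emptyset\}$, and then split according to whether $|S|=1$ or $|S|>1$ to identify $e_\emptyset\cdot\mathcal{F}'_0$ with $e_{0,S}\cdot\mathcal{F}'_0$. For the $G$-trivial case the paper simply cites \cite[Prop.~3.3]{dbalb}, whereas you give an explicit hands-on verification by splitting into the regimes $a\ge|S|$, $a<|S|-1$, and $a=|S|-1$ (with further case analysis on whether $v$ equals $v_0$); the arithmetic you describe is correct — in particular $c^a_{S,v}=-1$ when $v\ne v_0$ and $a=|S|-1\ge1$ still yields the same $\ZZ$-module on the right side, since multiplication by $-1$ is a unit. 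This direct route is self-contained and spares the reader the external reference, at the cost of slightly more bookkeeping; it establishes the same conclusion. Both approaches rely, as you correctly state, on the preliminary reduction via Remark \ref{asinLY}(ii) from ${\rm Fit}^{{\rm tr},a}_{\ZZ[G]}(h)$ to $\mathcal{F}'_a={\rm Fit}^a_{\ZZ[G]}(\tilde h)$.
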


\begin{proof} In the first case $G$ is trivial and $\mathcal{F}'_a$ is the $a$-th Fitting ideal of the abelian group $\mathcal{S}^T_S(L)^{\rm tr}$. The validity of Theorem \ref{main result alg} in this case therefore follows directly from \cite[Prop. 3.3]{dbalb}. In the rest of the argument, we will therefore assume that $a=0$ and $G$ is not trivial.

In this case, Lemma \ref{fit vanishing} (i) and (iii) combine to imply that the left hand side of (\ref{decomp}) is equal to
$e_{0,S}\cdot \mathcal{F}'_0$. In addition, $S^a_{\rm min}=\emptyset$ and so for each $v$ in $S$ one has $c_{S,v}^0 = 1 - 0 - 1 = 0$ and $\wp_0(S,v) = \{\emptyset\}$. If $|S| = 1$, then (\ref{def eI}) implies that 

\[ e_\emptyset = e_{\bf 1} + {\sum}_{\psi \in \widehat G'_{0,S}}e_\psi = {\sum}_{\psi \in \widehat G_{0,S}}e_\psi  = e_{0,S},\] %
whilst if $|S| >1$ then $e_\emptyset = e_{\bf 1} +e_{0,S}$. Since $e_{\bf 1} \cdot \mathcal{F}_0 = 0$ if $|S|>1$ by Lemma \ref{fit vanishing}(ii), this shows that in both cases the right hand side of (\ref{decomp}) is equal to $e_{0,S}\cdot \mathcal{F}'_0$, as required.
\end{proof}

The next result deals with the case that $a$ is `large'. 

\begin{proposition}\label{first step} Theorem \ref{main result alg} is valid if $a \ge |S|-1$. \end{proposition}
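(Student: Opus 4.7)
The plan is to apply Remark \ref{asinLY}(ii) to reduce to proving (\ref{decomp}) with $h = \tilde h^{\rm tr}$ replaced by $\tilde h$ and each term ${\rm Fit}^{{\rm tr}, a}_{\ZZ[G]}(h)$ replaced by $\mathcal{F}'_a := {\rm Fit}^a_{\ZZ[G]}(\tilde h)$. I then split into two ranges for $a$.

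For $a \ge |S|$ I would show both sides of (\ref{decomp}) vanish. On the right, $c^a_{S,v} = 0$ holds by definition, and $\widehat G'_{a,S}$ is empty (any $\psi$ in this set would need $|S_\psi| = a \ge |S|$, forcing $S_\psi = S$ and hence $\psi = {\bf 1}$, which is explicitly excluded), so $S^a_{\rm min} = \emptyset$ and $\wp_a(S,v) = \emptyset$. On the left, the convention ensures that ${\bf 1}\notin \widehat G_{(a),S}$, and any non-trivial $\psi \in \widehat G_{(a),S}$ has $|S_\psi| = |S|$ so that $v \in S_\psi$, $e_v e_\psi = e_\psi$, and therefore $(1-e_v+e_{\bf 1})e_\psi = 0$.

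For $a = n := |S|-1$ I would combine Lemma \ref{fit vanishing}(i), which kills every non-trivial $\psi \in \widehat G_{(a),S}\setminus \widehat G_{a,S}$ (characterized by $|S_\psi| = |S|$), with the observation that the idempotent $1-e_v+e_{\bf 1}$ kills every non-trivial $\psi$ with $v\in S_\psi$. The surviving characters are then exactly the trivial character and the non-trivial $\psi$ with $S_\psi = S\setminus\{v\}$, giving
$$ (1-e_v+e_{\bf 1})e_{(a),S}\cdot\mathcal{F}'_a = e_{\bf 1}\mathcal{F}'_a + {\sum}_{\{\psi \,:\, S_\psi = S\setminus\{v\}\}} e_\psi\mathcal{F}'_a. $$
On the right, the constraints $|I|=n$, $I\subseteq S^a_{\rm min}$ and $v\notin I$ combined with $|S|=n+1$ force $I = S\setminus\{v\}$, so $\wp_a(S,v)$ is either $\{S\setminus\{v\}\}$ or $\emptyset$. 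Using the fact that a non-empty $S^a_{\rm min}$ contains some $S_\psi$ of cardinality $n$ and is therefore of cardinality at least $n$, I would enumerate the four possible configurations determined by $|S^a_{\rm min}|\in\{0, n, n+1\}$ together with whether $v\in S^a_{\rm min}$. A direct computation in each configuration shows $c^a_{S,v} \in \{-1, 0, 1\}$, and combining this with the explicit form $e_I = e_{\bf 1} + \sum_{\{\psi:S_\psi=I\}}e_\psi$ verifies the equality (\ref{decomp}) in each case.

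The main obstacle will be this configuration-by-configuration arithmetic of $c^a_{S,v}$: the first summand on the right-hand side coincides with $e_{\bf 1}\mathcal{F}'_a$ when $c^a_{S,v}=\pm 1$ and vanishes when $c^a_{S,v}=0$, so the claimed equality hinges on $c^a_{S,v}$ taking exactly the right sign/parity in each of the four cases. The directness of the sum on the right-hand side then turns out to be automatic, since in each configuration at most one of its two summands is non-zero.
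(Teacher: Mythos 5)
Your proposal follows the same route as the paper: discharge $a\ge|S|$ by showing both sides of (\ref{decomp}) vanish, then for $a=|S|-1$ split into sub-cases determined by $|S^a_{\rm min}|$ and whether $v\in S^a_{\rm min}$. The case arithmetic for $c^a_{S,v}$ and the reduction via Remark \ref{asinLY}(ii) match the paper. However, one intermediate claim is wrong and should be flagged: for $a=|S|$ you assert that $S_\psi=S$ forces $\psi=\eins$. That is false in general. Having $S_\psi=S$ only means that each $N_v$ (and hence the group $N_S$ they generate) is contained in $\ker(\psi)$, and $N_S$ can be a proper subgroup of $G$, so $\widehat G'_{|S|,S}$ may well be non-empty (the paper devotes a separate sub-case to $a=|S|$, $S^a_{\rm min}=S$, working directly with the idempotent $e_{N_S}$). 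Your argument nevertheless reaches the right conclusion, for two reasons: on the right-hand side, $\wp_a(S,v)=\emptyset$ holds whether $S^a_{\rm min}$ is $\emptyset$ or $S$, since any $I\in\wp_{|S|}(S^a_{\rm min})$ would equal $S$ and hence contain $v$; and your left-hand-side argument only uses the correct observation that $v\in S_\psi$ gives $e_ve_\psi=e_\psi$. So the error is a broken justification rather than a broken conclusion, but it should be repaired before the proof can be considered complete.

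Two smaller points. First, the sums you write over $\{\psi : S_\psi = S\setminus\{v\}\}$ should, strictly, range over $\{\psi\in\widehat G'_{a,S}: S_\psi=S\setminus\{v\}\}$ as in the definition (\ref{def eI}) of $e_I$; the difference is harmless because the omitted characters lie in $\widehat G_{(a),S}\setminus\widehat G_{a,S}$ and so are killed by Lemma \ref{fit vanishing}(i), but this deserves to be said. Second, the paper invokes Lemma \ref{second step} to dispose of the cases $G$ trivial and $a=0$ before the sub-case analysis; your enumeration based on ``a non-empty $S^a_{\rm min}$ contains some $S_\psi$'' tacitly assumes $G$ non-trivial (for $G$ trivial, $S^a_{\rm min}$ is defined by convention as $S\setminus\{v_0\}$ or $\emptyset$, not as a union of sets $S_\psi$), so you should either invoke Lemma \ref{second step} as the paper does or note that the identity still follows by a direct check in that degenerate case.
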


\begin{proof} By using the result of Lemma \ref{fit vanishing}(i) one can easily check that both sides of (\ref{decomp}) vanish if $a > |S|$. It is thus enough to consider $a$ equal to either $|S|-1$ or $|S|$. Our argument then splits into several subcases, depending on the cardinality of $S_{\rm min}^a$. Following Lemma \ref{second step} we always assume $a>0$.

We consider first the case that $S_{\rm min}^a = \emptyset$ and $a>0$, and hence that $\widehat G'_{a,S} = \emptyset$. In this case Lemma \ref{fit vanishing}(i) and (ii) combine to imply $e_{(a),S}\cdot \mathcal{F}'_a = e_{a,S}\cdot \mathcal{F}'_a$ is equal to $e_{\bf 1}\cdot \mathcal{F}'_a$ if $a = |S|-1$ and vanishes if $a = |S|$. One also has $\wp_a(S,v) = \emptyset$ (since $a >0$) and so the right hand side of (\ref{decomp}) is equal to $c_{S,v}^a e_{\bf 1}\cdot \mathcal{F}'_a$. The claimed equality is thus true in this case since $c_{S,v}^a$ is equal to $1-0-0 =1$ if $a = |S|-1$ and to $0$ if $a = |S|$.

In the remainder of the argument we assume $S_{\rm min}^a \not= \emptyset$ and hence that $|S_{\rm min}^a|$ is equal to either $|S|-1$ (so $a = |S|-1$) or $|S|$ (so $S_{\rm min}^a = S$).

We consider first the case $|S_{\rm min}^a|=|S|-1 = a$ and $v \in S_{\rm min}^a$. In this case one has $N_v \subseteq \ker(\psi)$ for all $\psi \in \widehat G_{a,S}'$ so $(1-e_v)e_{a,S} = 0$ and Lemma \ref{fit vanishing}(i) 
implies the left hand side of (\ref{decomp}) is equal to $e_{\bf 1}\cdot\mathcal{F}'_a$. Given this, the claimed equality follows from the fact that $\wp_a(S,v) = \emptyset$ and $c_{S,v}^a = 0-1-0 = -1$.

We next assume $|S_{\rm min}^a|=|S|-1 = a$ and $v \notin S_{\rm min}^a$ so that $S_{\rm min}^a = S\setminus \{v\}$. In this case Lemma \ref{fit vanishing}(i) and (iii) together imply that the left hand side of (\ref{decomp}) is equal to $e_{a,S}\cdot\mathcal{F}'_a$. In addition, one has $c_{S,v}^a = 1 - 1-0 = 0$ and the unique element of $\wp_a(S,v)$ is equal to $I:= S_{\rm min}^a$ so $e_I = e_{a,S}$ and the right hand side of (\ref{decomp}) is also equal to $e_{a,S}\cdot\mathcal{F}'_a$.

We now consider the case $S_{\rm min}^a = S$ and $a = |S|-1$. For $v$ in $S$ we write $N(v)$ for the subgroup generated by $N_{v'}$ as $v'$ varies over $S\setminus \{v\}$. Then for each $\psi$ in $\widehat G_{a,S}'$ one has $e_\psi e_v = 0$ if and only if $N(v) \subseteq \ker(\psi)$. This implies $(1-e_v)e_{a,S} = (e_{N(v)}-e_{\bf 1})e_{a,S}$ and hence that the left hand side of (\ref{decomp}) is equal to $e_{N(v)}e_{a,S}\cdot \mathcal{F}'_a$. The claimed equality thus follows from the fact that, in this case, $c_{S,v}^a = 0$, $\wp_a(S,v)$ has a single element $I_v := S\setminus \{v\}$ and it is straightforward to check that $e_{I_v}$ is equal to $e_{N(v)}e_{a,S}$.

Finally, we assume $a = |S|$ and $S^a_{\rm min}\not= \emptyset$ (and hence that both $S_{\rm min}^a = S$ and $G$ is not trivial). In this case one verifies that $e_{(a),S} = e_{a,S} = e_{N_S}-e_{\bf 1}$ with $N_S$ the normal subgroup of $G$ generated by $N_v$ as $v$ varies over $S$. For each $v$ in $S$ one therefore has $(1-e_v+ e_{\bf 1})e_{(a),S} = (1-e_v)e_{N_S} =0$, where the last equality is valid since $N_v\subseteq N_S$, and so the left hand side of (\ref{decomp}) vanishes. On the other hand, in this case the right hand side of (\ref{decomp}) vanishes since for any $v$ in $S$ both $c_{S,v}^a = 0$ and $\wp_a(S,v)$ is empty.

This completes the proof of Theorem \ref{main result alg} in the case that $a \ge |S|-1$. \end{proof}

\subsubsection{}\label{main case} Following Lemma \ref{second step} and Proposition \ref{first step} we assume in the remainder of the argument that $G$ is not trivial and that $0< a < |S|-1$.

We note that in this case Lemma \ref{fit vanishing}(ii) implies that the term $1-e_v+e_{\bf 1}$ on the left hand side of (\ref{decomp}) can be replaced by $1-e_v$ and that the first summand on the right hand side of (\ref{decomp}) can be omitted. 

Now Lemma \ref{fit vanishing}(i) implies that the term $e_{(a),S}$ in the left hand side of (\ref{decomp}) can be replaced by $e_{a,S}$, so we are left to prove that
\begin{equation}\label{enoughdecomp}(1-e_v)e_{a,S}\cdot\mathcal{F}'_a={\bigoplus}_{I\in\wp_a(S,v)}e_I\cdot\mathcal{F}'_a\end{equation}
and, from the definition of non-commutative Fitting invariants, it is enough to prove that this equality is valid after localising at every $p$.
We henceforth fix a prime $p$. 

Then, since both of the $\ZZ_{(p)}[G]$-modules $P^0_{(p)}$ and $P_{(p)}$ are free of rank $d$, after modifying $\phi$ appropriately if necessary, we may and will assume that $P^0_{(p)}=P_{(p)}$. We then write $M_p$ for the matrix of $\phi_{(p)}$ with respect to the induced basis $\widehat{\underline{b}}$ of $P_{(p)}$ as in (\ref{extrabasisproperties}).

For each $J\subseteq [d]$ we write $m_{p}(J)$ for the ideal of $\xi(\ZZ_{(p)}[G])$ generated by reduced norms of elements in the set $M_{p}(J)$ of all matrices obtained from $M_{p}$ by replacing all entries in each column indexed by an integer in $J$ by arbitrary elements of $\ZZ_{(p)}[G]$. Then, by the very definition of $a$-th Fitting invariants, we have that
\begin{equation}\label{key equality} \mathcal{F}'_{a,(p)} = \Fit^a_{\ZZ_{(p)}[G]}(M_p) = {\sum}_{J\in \wp_a(d)} m_{p}(J)\end{equation}
with $\wp_a(d) := \wp_a(\{i \in \ZZ: 1\le i\le d\})$. Next we note that, for $\psi$ in $\widehat G \setminus \{{\bf 1}\}$, one has 
\[ v \notin S_\psi \Leftrightarrow N_v \not\subseteq \ker(\psi) \Leftrightarrow (1-e_v)e_\psi \not=0\]
and hence, 
%
%
 for $a\in \mathbb{N}_0$, also 
\begin{equation}\label{useful idem relation} (1-e_v)e_{a,S} = {\sum}_{I \in\wp_a(S,v)}(e_I-e_{\bf 1}).\end{equation}
This equality combines with (\ref{key equality}) to imply that

\begin{align*} (1-e_v)e_{a,S}\cdot\mathcal{F}'_{a,(p)}
=\, &({\sum}_{I\in \wp_a(S,v)}(e_I-e_{\bf 1}))\cdot({\sum}_{J\in \wp_a(d)}m_{p}(J))\\
=\, &{\sum}_{I \in \wp_a(S,v)}(e_I-e_{\bf 1})\cdot m_{p}(I) \\
=\, &{\sum}_{I\in \wp_a(S,v)}(e_I-e_{\bf 1}) \cdot \mathcal{F}'_{a,(p)},\end{align*}
where the second and third equalities follow from the result of Lemma \ref{lemma one} below.

To deduce the equality (\ref{enoughdecomp}) in the case that $a < |S|-1$ it thus suffices to observe firstly that the last sum in the above formula is direct since for distinct elements $I_1$ and $I_2$ of $\wp^\ast_a(S)$ the idempotents $e_{I_1}-e_{\bf 1}$ and $e_{I_2}-e_{\bf 1}$ are orthogonal, and then that for each such $I$ Lemma \ref{fit vanishing}(ii) implies $(e_I -e_{\bf 1})\cdot \mathcal{F}'_{a,(p)} = e_I \cdot \mathcal{F}'_{a,(p)}$.

In the following result we use the correspondence $v_i \leftrightarrow i$ to identify $\wp_a(S_0)$ with the subset $\wp_a([n])$ of $\wp_a([d])$.

\begin{lemma}\label{lemma one} Take $\psi\in \widehat G_{a,S}\setminus \{{\bf 1}\}$ with $v \notin S_\psi$. Then $S_{\psi}$ belongs to $\wp_a([d])$ and for each $J$ in $\wp_a([d])\setminus \{S_\psi\}$ one has $e_\psi \cdot m_{p}(J) =0$. \end{lemma}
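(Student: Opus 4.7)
The first claim that $S_\psi\in\wp_a([d])$ is immediate: since $\psi\in\widehat G_{a,S}\setminus\{{\bf 1}\}$ one has $|S_\psi|=a$, and the hypothesis $v\notin S_\psi$ places $S_\psi$ in $S_0$, identified via $v_i\leftrightarrow i$ with $[n]\subseteq[d]$.

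For the vanishing of $e_\psi\cdot m_p(J)$ when $J\ne S_\psi$, the plan is to exhibit a zero-structure of the $\psi$-component matrix $M_\psi\in M_{d\psi(1)}(\CC)$ obtained from $e_\psi M_p$ (in the sense of the splitting used in the proof of Lemma~\ref{fit vanishing}) at the row and column indices in $S_\psi$. For each $i\in[n]$ write $J_i:=\ker(\ZZ[G]\to\ZZ[G/G_{v_i}])$. The formula $\psi(T_{G_{v_i}})=|G_{v_i}|\dim V_\psi^{G_{v_i}}$ combined with Remark~\ref{na comms}(i) shows that $e_\psi J_i=0$ if $v_i\in S_\psi$ and $e_\psi J_i=e_\psi\CC[G]$ otherwise. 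Together with (\ref{extrabasisproperties}) this gives
\[
e_\psi\cdot\ker(\varrho_{L,S}\circ\varpi)=\bigoplus_{j\in [d]\setminus S_\psi}e_\psi\CC[G]\hat b_j,
\]
a free $e_\psi\CC[G]$-module of rank $d-a$. A straightforward rank count via (\ref{selmer lemma II}) and the multiplicity computation in the proof of Lemma~\ref{fit vanishing} shows $e_\psi\ker(\varpi)$ has the same $e_\psi\CC[G]$-rank; since $\ker(\varpi)\subseteq\ker(\varrho_{L,S}\circ\varpi)$, equality after applying $e_\psi$ follows. Hence $\im(e_\psi\phi)=\bigoplus_{j\notin S_\psi}e_\psi\CC[G]\hat b_j$, which forces $e_\psi(M_p)_{ji}=0$ for every $j\in S_\psi$ and $i\in[d]$: the rows of $M_\psi$ indexed by $S_\psi$ vanish.

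The dual ``column vanishing'' $e_\psi(M_p)_{ji}=0$ for every $i\in S_\psi$ and $j\in[d]$ I would extract from the finer form of $\tilde h$ provided by \cite[Prop.~5.6]{bms1} (equivalently \cite[\S 5.4]{bks} or \cite[Lem.~6.15]{bses2}): beyond (\ref{extrabasisproperties}), the construction of $\tilde h$ has the property that $\phi(\hat b_i)\in J_i\cdot P$ for every $i\in[n]$, reflecting the local relations imposed at each such place. Since $e_\psi J_i=0$ whenever $v_i\in S_\psi$, this gives $e_\psi\phi(\hat b_i)=0$ for every $i\in S_\psi$, as required.

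With both vanishings in place, fix $J\in\wp_a([d])\setminus\{S_\psi\}$ and let $M'_p$ be any matrix obtained from $M_p$ by replacing the columns indexed by $J$ with arbitrary elements of $\ZZ_{(p)}[G]^d$. Reordering rows and columns of $M'_\psi$ so that the indices in $S_\psi$ (respectively $J$) come first, the row vanishing yields the block form
\[
M'_\psi=\begin{pmatrix}A & 0\\ B & C\end{pmatrix},
\]
where $C$ is the $\psi$-splitting of $e_\psi M_p(\bar S_\psi,\bar J)$ and is independent of the replacement, so that $e_\psi\cdot{\rm Nrd}_{\QQ[G]}(M'_p)=\det(M'_\psi)=\pm\det(A)\det(C)$. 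Since $|J|=|S_\psi|$ and $J\ne S_\psi$, the set $\bar J\cap S_\psi$ is non-empty; choosing any $i_0$ in it, the column vanishing forces the column of $C$ indexed by $i_0$ to be zero, so $\det(C)=0$ and every generator of $e_\psi m_p(J)$ vanishes. The main obstacle is the column vanishing, which is not visible from (\ref{extrabasisproperties}) alone and requires the dual local structure of $\phi$ at the places $v_i\in[n]$ furnished by the cited construction of $\tilde h$.
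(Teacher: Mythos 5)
The rank-count argument in your first paragraph is a valid alternative to the paper's explicit place-by-place computation, and it correctly establishes the \emph{single} vanishing the paper needs: with the paper's convention $\phi_{(p)}(\hat b_i)=\sum_j\mu_{i,j}\hat b_j$ and $(M_p)_{ij}=\mu_{i,j}$, your computation gives $e_\psi\mu_{i,j}=0$ for all $i\in[d]$ and all $j\in S_\psi$, i.e.\ every column of $M_p$ indexed by $S_\psi$ vanishes under $e_\psi$. That one vanishing already finishes the proof: for $J\in\wp_a([d])\setminus\{S_\psi\}$ one has $S_\psi\setminus J\neq\emptyset$, and for any $k$ in that set the $k$-th column of any matrix in $M_p(J)$ is left untouched by the replacement and hence is zero after applying $e_\psi$ (equivalently $q_\psi$); a matrix with a zero column has vanishing reduced norm, so $e_\psi\cdot m_p(J)=0$. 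This is precisely the paper's conclusion, reached there by observing that $v_k$ splits completely in $L^\psi$ so that $\sum_j\mu_{i,j}w_j=0$ forces $q_\psi(\mu_{i,k})=0$.

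Your proof instead goes on to build a block-determinant argument around a second, ``dual'' vanishing $e_\psi\phi(\hat b_i)=0$ for $i\in S_\psi$, which you candidly flag as not visible from (\ref{extrabasisproperties}) and which you propose to extract from an unverified additional property $\phi(\hat b_i)\in J_i\cdot P$ of the presentation $\tilde h$. This is a genuine gap: that property is established neither in the paper nor in the references as used here, and — crucially — it is not needed. There is also a matrix-convention slip that infects the final step: you index your matrix as $(M_p)_{ji}$ (the transpose of the paper's $M_p$) and then speak of replacing ``columns indexed by $J$'' of this transposed matrix, but $m_p(J)$ is defined by replacing columns of the paper's $M_p$ (which are rows of your transpose). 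As a result the block form you write down does not line up with what $m_p(J)$ actually does, so $\det(M'_\psi)=\pm\det(A)\det(C)$ is not about the right matrices. The short conclusion above, which uses only the vanishing you did prove, avoids both problems; the dual vanishing and the block decomposition should simply be dropped.
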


\begin{proof} Set $G_\psi := G/\ker(\psi)$ and $L^\psi := L^{{\rm ker}(\psi)}$. We write $q_\psi:\QQ[G]\to\QQ[G_\psi]$ for the canonical projection.
We  write $M_{p,\psi}$ for the image in $M_d(\ZZ_{(p)}[G_\psi])$ of $M_{p}$ under the map that applies $q_\psi$ to each entry. We claim that each column of $M_{p,\psi}$ that corresponds to an integer in $S_\psi\setminus J$ must vanish. 

In that case, if $J \not= S_\psi$, then the image in $M_d(\ZZ_{(p)}[G_\psi])$ of any matrix in $M_{p}(J)$ would have at least one column of zeroes and so its reduced norm in $\xi(\ZZ_{(p)}[G_\psi])$ would have to vanish. It would follow that $e_\psi \cdot m_{p}(J)=0$, as claimed.

To finally prove our claim we 
fix a place $v_k\in S_\psi$ corresponding to the $k$-th column of $M_{p,\psi}$ with $k$ in $S_\psi\setminus J$. Then $v_k$ splits completely in $L^\psi/K$.

For any $i\in[d]$ we write $\phi_{(p)}(\hat b_i)={\sum}_{j=1}^{j=d}\mu_{i,j}\hat b_j$ with each $\mu_{i,j}$ in $\ZZ_{(p)}[G]$. Then since $\phi_{(p)}(\hat b_i)$ belongs to $\im(\phi_{(p)})=\ker(\varpi_{(p)})$, the properties (\ref{extrabasisproperties}) imply that in $Y_{L,S_0,(p)}$ one has 
\begin{equation*}\label{zerocolumns}0=\varrho_{L,S,(p)}(\varpi_{(p)}(\phi_{(p)}(\hat b_i)))={\sum}_{j=1}^{j=d}\mu_{i,j}\varrho_{L,S,(p)}(\varpi_{(p)}(\hat b_j))={\sum}_{j=1}^{j=n}\mu_{i,j}w_{j,L}.\end{equation*}

Now this equality implies that $\mu_{i,j}w_{j,L}$ vanishes for every $i\in[d]$ and $j\in[n]$. It follows that the element $q_\psi(\mu_{i,k})\cdot w_{k,L^\psi}$ vanishes in $Y_{L^\psi,S_0,(p)}$ for every $i\in[d]$. Since $v_k$ splits completely in $L^\psi/K$, we finally deduce that $q_\psi(\mu_{i,k})=0$ for every $i\in[d]$, as was claimed.
\end{proof}


\section{Generalised Stark elements}\label{hnase section}

In this section we use the theory of reduced exterior powers, as developed by Sano and the first author in \cite{bses}, to formulate, in Definition \ref{hnase}, an appropriate generalisation of the classical notion of `Stark element'.  

To do this, we fix a Dedekind domain $\Lambda$ that is a subring of $\CC$ and write $\mathcal{F}$ for its field of fractions. We let $\A$ be a $\Lambda$-order in a finite-dimensional separable $\mathcal{F}$-algebra $A$ and set $A_\CC:=\CC\otimes_{\mathcal{F}}A$. Given an $\A$-module $M$ we write $M_{\rm tor}$ for its $\Lambda$-torsion submodule and $M_{\rm tf}$ for the quotient of $M$ by $M_{\rm tor}$. 
For any field extension $E$ of $\mathcal{F}$ we also set $E\cdot M:=E\otimes_{\Lambda}M$.

Throughout the section we also fix a (non-empty) set of places $S$ of $K$ as in (\ref{data fix}).

\subsection{Bimodules and functors}\label{dataasin}

We assume to be given a finitely generated $(\mathcal{A},\Lambda[G])$-bimodule $\Pi$ that has both of the following properties. 


\begin{itemize}
\item[($\Pi_1$)] $\Pi$ is a locally-free $\mathcal{A}$-module.
 \item[($\Pi_2$)] The association $W\mapsto W\otimes_\mathcal{A}\Pi$ induces an injection from the set of isomorphism classes of simple right $A_\CC$-modules to the set of isomorphism classes of simple right $\CC[G]$-modules.
\end{itemize}


\begin{remark}\label{wedd bij}{\em The property ($\Pi_2$) induces a bijection $\Pi_*$ from the set of Wedderburn components ${\rm Wed}_A$ of $A_\CC$ to a subset $\Upsilon_\Pi$ of $\widehat G$ and hence a commutative diagram 
\begin{equation}\label{key commute}\begin{CD}
K_1(\QQ[G]) @> {\rm Nrd}_{\QQ[G]}  >> \zeta(\QQ[G])^\times @> \subset  >> \zeta(\CC[G])^\times = {\prod}_{\widehat G}\CC^\times\\
@V \mu^1_\Pi VV @. @VV \iota_\Pi V\\
 K_1(A) @> {\rm Nrd}_A >> \zeta(A)^\times @>\subset  >> \zeta(A_\CC)^\times = {\prod}_{{\rm Wed}_A}\CC^\times.
\end{CD}\end{equation}
Here $K_1(A)$ is the $K_1$-group of the category of finitely generated $A$-modules, $\mu^1_\Pi$ sends each automorphism $\alpha$ of a finitely generated left $\QQ[G]$-module $V$ to the class of the  automorphism ${\rm id}_\Pi\otimes_{\ZZ[G]}\alpha$ of $\Pi\otimes_{\ZZ[G]}V$ and $\iota_\Pi$ sends $(z_\chi)_{\chi\in \widehat G}$ to $(z_{\Pi_*(C)})_{C\in {\rm Wed}_A}$.}\end{remark}

\begin{example}\label{exam1}{\em In concrete applications, the following bimodules $\Pi$ will arise.\

(i) If $\Lambda\subset\QQ$ and $A$ is a direct factor of $\QQ[G]$, then for any homomorphism of rings $\kappa:\Lambda[G] \to \mathcal{A}$  one can set $\Pi_\kappa := \mathcal{A}$. In all cases the lattice $\Pi_\kappa$ has the properties $(\Pi_1)$ and $(\Pi_2)$, and $\Upsilon_{\Pi_\kappa}$ is the subset of $\widehat{G}$ comprising characters which occur in $A_\CC$. 

(ii) As a special case of (i), for any $a\in\NN_0$ with $a < |S|$ and any set $I\in \wp^\ast_a(S)$, we obtain a $\Lambda$-order $\Lambda[G]e_I$ in the algebra $\QQ[G]e_I$. Then the $(\Lambda[G]e_I,\Lambda[G])$-bimodule $\Pi_I:=\Lambda[G]e_I$ has the properties $(\Pi_1)$ and $(\Pi_2)$ with $\Upsilon_{\Pi_I}=\{\eins\}\cup\{\chi\in\G'_{a,S}:\,S_\chi=I\}$. Analogous claims are valid with $e_I$ replaced by $e_{(a),S}$.

(iii) As another special case of (i), fix a cyclic subgroup $C$ of $G$ and an idempotent $e_{-}$ of $\Lambda[C]$ that is central in $\Lambda[G]$ and has the property that $\iota_\#(e_{-})=e_{-}$. As explained in \cite[Exam. 3.2 (iii)]{bms1}, the $\Lambda$-order $\Lambda[G]e_{-}$ is locally-Gorenstein in the sense of Def. 3.1 of loc. cit., and the $(\Lambda[G]e_{-},\Lambda[G])$-bimodule $\Pi_{-}:=\Lambda[G]e_{-}$ has the properties $(\Pi_1)$ and $(\Pi_2)$, as well as the property $(\Pi_3)$ stated in loc. cit.

(iv) Let $\psi$ be a representation $G \to {\rm GL}_{\psi(1)}(\mathcal{O}_\psi)$ for a finite extension $\mathcal{O}_\psi$ of $\La$. 
 Set $\mathcal{A}_\psi := \mathcal{O}_\psi$ and $\Pi_\psi := \mathcal{O}_\psi^{\psi(1)}$, regarded as an $(\mathcal{O}_\psi,\Lambda[G])$-bimodule via $\psi$. Then 
$\Pi_\psi$ has the properties $(\Pi_1)$ and $(\Pi_2)$ 
with $\Upsilon_{\Pi_\psi} = \{\psi\}$.
}\end{example}

\begin{remark}{\em Any bimodule $\Pi$ as above gives rise to functors $M \mapsto {^\Pi}M$ and $M\mapsto {_\Pi}M$ from the category of left $G$-modules to the category of left $\mathcal{A}$-modules by setting
\[ {^\Pi}M := H^0(G,\Pi\otimes_\ZZ M)\,\,\,\text{ and }\,\,\,{_\Pi}M:= H_0(G,\Pi\otimes_\ZZ M) = \Pi\otimes_{\ZZ[G]}M,\]
where the left action of $G$ on the tensor product is via $g(\pi\otimes m) = (\pi)g^{-1}\otimes g(m)$. These functors are respectively left and right exact.}\end{remark}

We next fix a surjective homomorphism of $\mathcal{A}$-modules 
\begin{equation}\label{surj Pi map} \pi: {_{\Pi}}X_{L,S}\to Y_\pi\end{equation}
in which $Y_\pi$ is locally-free. We then define an associated central idempotent of $A$ by setting
 $e_\pi:={\sum} e$, where the sum is over all primitive idempotents $e$ of $\zeta(A)$ with  $e(\mathcal{F}\cdot\ker(\pi))=0$.

\begin{remark}\label{cases of r}{\em  We write $r_{L,S}^\Pi$ for the maximal possible rank of a locally-free $\mathcal{A}$-quotient of the module ${_{\Pi}}X_{L,S}$. Then one has $e_\pi\neq 0 \Longleftrightarrow {\rm rk}_\A(Y_\pi)=r_{L,S}^\Pi$. In addition, in natural cases it is possible to describe $r_{L,S}^\Pi$ explicitly. 
For example, if $(_{\Pi}X_{L,S})_{\rm tf}$ is a locally-free $\mathcal{A}$-module, as is automatically the case if $\mathcal{A}$ is a Dedekind domain, then 
 $r_{L,S}^\Pi = {\rm rk}_{\mathcal{A}}((_{\Pi}X_{L,S})_{\rm tf})$. In general, if $v\in S$, then $_{\Pi}Y_{L,S\setminus\{v\}}$ is a quotient of $_{\Pi}X_{L,S}$ and so $|S|\geq r_{L,S}^\Pi\ge | S^{\Pi}_v|$ with
\[ S^{\Pi}_v := \{ v' \in S\setminus\{v\}: \text{ the }\mathcal{A}\text{-module }\,H_0(G_{v'},\Pi)_{\rm tf} \,\text{ is both non-zero and locally-free}\}.\]
%
Note also that, in the setting of Example \ref{exam1}(i), the $\mathcal{A}$-module $H_0(G_{v'},\Pi_\kappa)_{\rm tf}$ is non-zero and locally-free if $\kappa$ sends each element of $G_{v'}$ to the identity of $A$. 
}\end{remark}

\subsection{Equivariant derivatives of $L$-series}


\subsubsection{}For any $\chi$ in $\G$ 
and any $r\in \mathbb{N}_0$ with $r \leq {\rm ord}_{s=0}L_{S}(\chi,s)$, we set
$$L^r_{S}(\chi,0):=\lim_{s\to 0} s^{-r}L_{S}(\chi,s).$$
We then associate to our fixed map $\pi$ in (\ref{surj Pi map}) an element
of $\zeta(\CC[G])$ by setting

\[ \theta_{S}^{\pi}(0) := {\sum}_{\chi\in \Upsilon_\Pi} e_\chi L_{S}^{r_{\pi}(\chi)}(\check\chi,0)\quad\text{with}\quad r_\pi(\chi) := {\rm dim}_\CC(W_\chi\otimes_{A_\CC}(\CC\!\cdot\! Y_\pi)).\]
%
Here we have fixed, for each $\chi\in \Upsilon_\Pi$, a simple right $A_\CC$-module $W_\chi$ for which the associated $\CC[G]$-module $W_\chi\otimes_{A_\CC}(\CC\cdot \Pi)$ has character $\chi$. 
%
%

In the next result we refer to the non-negative integer $r^\Pi_{L,S}$ defined in Remark \ref{cases of r} and to the map $\iota_\Pi$ defined in Remark \ref{wedd bij}.

\begin{lemma}\label{stick vanishing} $\iota_\Pi(\theta_{S}^{\pi}(0)) = e_\pi\iota_\Pi(\theta_{S}^{\pi}(0))$ and, if ${\rm rk}_\mathcal{A}(Y_\pi) \not= r_{L,S}^\Pi$, then $\theta_{S}^{\pi}(0)=0$.
\end{lemma}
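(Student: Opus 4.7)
The plan is to reduce both assertions to the key inequality
\[
r_\pi(\chi) \leq {\rm ord}_{s=0} L_S(\check\chi, s),\qquad \chi \in \Upsilon_\Pi,
\]
with equality precisely when the primitive idempotent $\tilde e_C \in \zeta(A_\CC)$ at the Wedderburn component $C := \Pi_*^{-1}(\chi)$ satisfies $\tilde e_C \leq e_\pi$ (viewed in $\zeta(A_\CC)$ after embedding $\zeta(A) \hookrightarrow \zeta(A_\CC)$).

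To establish the inequality, I would apply the right-exact functor $W_\chi \otimes_{A_\CC}(-)$ to the surjection induced by $\pi$ on $\CC$-coefficients, obtaining a surjection $W_\chi \otimes_{A_\CC} \CC \cdot {_{\Pi}}X_{L,S} \twoheadrightarrow W_\chi \otimes_{A_\CC} \CC \cdot Y_\pi$ of $\CC$-spaces. Comparing dimensions gives the inequality, with equality if and only if $W_\chi \otimes_{A_\CC} \CC \cdot \ker(\pi) = 0$, equivalently $\tilde e_C \cdot (\CC \cdot \ker(\pi)) = 0$, equivalently $\tilde e_C \leq e_\pi$. To identify the upper bound with ${\rm ord}_{s=0} L_S(\check\chi, s)$, I would use the associativity isomorphism
\[
W_\chi \otimes_{A_\CC} \CC \cdot {_{\Pi}}X_{L,S} \cong (W_\chi \otimes_{\mathcal{A}} \CC\Pi) \otimes_{\CC[G]} \CC X_{L,S},
\]
together with the defining property of $\Pi_*$ that $W_\chi \otimes_\mathcal{A} \CC\Pi$ is a right $\CC[G]$-module of character $\chi$; this rewrites the dimension as the multiplicity of the simple left $\CC[G]$-module $V_\chi$ in $\CC \cdot X_{L,S}$, which by the Dirichlet regulator (\ref{lambdadef}) together with the standard Tate order-of-vanishing formula equals ${\rm ord}_{s=0} L_S(\check\chi, s)$.

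Given the key inequality, both claims follow formally. For the first, the commutative diagram (\ref{key commute}) expresses $\iota_\Pi(\theta_S^\pi(0))$ as a sum indexed by $C \in {\rm Wed}_A$ of terms $\tilde e_C \cdot L_S^{r_\pi(\chi)}(\check\chi, 0)$ with $\chi = \Pi_*(C)$; for any $C$ with $\tilde e_C \not\leq e_\pi$ the strict inequality $r_\pi(\chi) < {\rm ord}_{s=0} L_S(\check\chi, s)$ forces $L_S^{r_\pi(\chi)}(\check\chi, 0) = \lim_{s\to 0} s^{-r_\pi(\chi)} L_S(\check\chi, s) = 0$, yielding $\iota_\Pi(\theta_S^\pi(0)) = e_\pi \cdot \iota_\Pi(\theta_S^\pi(0))$. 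For the second, Remark \ref{cases of r} shows that the hypothesis ${\rm rk}_\mathcal{A}(Y_\pi) \neq r_{L,S}^\Pi$ forces $e_\pi = 0$, and so the first claim gives $\iota_\Pi(\theta_S^\pi(0)) = 0$; since $\theta_S^\pi(0)$ is supported on the idempotents $\{e_\chi : \chi \in \Upsilon_\Pi\}$ and the restriction of $\iota_\Pi$ to the corresponding subspace $\bigoplus_{\chi \in \Upsilon_\Pi} \CC e_\chi$ is a $\CC$-linear bijection onto $\zeta(A_\CC)$, we deduce $\theta_S^\pi(0) = 0$.

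The most delicate step will be the careful bookkeeping of left- versus right-module conventions in the tensor computation that identifies $\dim_\CC(W_\chi \otimes_{A_\CC} \CC \cdot {_{\Pi}}X_{L,S})$ with ${\rm ord}_{s=0} L_S(\check\chi, s)$; in particular, getting the contragredient $\check\chi$ (rather than $\chi$) to appear in the resulting $L$-series requires correctly converting the right $\CC[G]$-module $W_\chi \otimes_\mathcal{A} \CC\Pi$ into a left module and invoking Tate's formula in the proper normalisation.
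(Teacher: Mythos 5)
Your proposal is correct and follows essentially the same route as the paper: both reduce the lemma to the identity $\dim_\CC(W_\chi\otimes_{A_\CC}\CC\cdot{_\Pi}X_{L,S})={\rm ord}_{s=0}L_S(\chi,s)$ (via Tate's formula plus the chain of isomorphisms through the bimodule $\Pi$), and then decompose this dimension as $r_\pi(\chi)+\dim_\CC(W_\chi\otimes_{A_\CC}\ker(\pi))$ by applying $W_\chi\otimes_{A_\CC}(-)$ to the exact sequence coming from $\pi$. Your only micro-deviations (phrasing it as an inequality with an equality criterion, and routing the second claim through $e_\pi=0$ from Remark \ref{cases of r} and injectivity of $\iota_\Pi$ on $\bigoplus_{\chi\in\Upsilon_\Pi}\CC e_\chi$ rather than directly observing that $\QQ\cdot\ker(\pi)$ contains a copy of $A$) are equivalent reformulations, not a genuinely different argument; and your worry about the $\chi$ versus $\check\chi$ bookkeeping is in fact harmless here, since $X_{L,S}$ is defined over $\QQ$ and hence has a self-dual character, forcing $L_S(\chi,s)$ and $L_S(\check\chi,s)$ to vanish to the same order at $s=0$.
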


\begin{proof} For each $\chi$ in $\widehat{G}$ set $V_\chi := W_\chi\otimes_{A_\CC}(\CC\cdot \Pi)$. Then, since this module has character $\chi$, an analysis of the functional equation of Artin $L$-series shows that the order of vanishing at $s=0$ of the meromorphic function $L_{S}(\chi,s)$ is given by
\begin{equation}\label{fe equality} {\rm ord}_{s=0}L_{S}(\chi,s) = {\rm dim}_\CC( H^0(G,\Hom_\CC(V_{\check\chi},\CC\!\cdot\! X_{L,S})))\end{equation}
(for details see, for example, \cite[Chap. I, Prop. 3.4]{tate}). Taken together with the natural composite isomorphisms of vector spaces
\begin{align*} H^0(G,\Hom_\CC(V_{\check\chi},\CC\cdot X_{L,S})) \cong &\, H_0(G,\Hom_\CC(V_{\check\chi},\CC\!\cdot\! X_{L,S}))\\
 \cong &\, H_0(G,V_{\chi}\otimes_\CC \CC\!\cdot\! X_{L,S}) \\
 \cong &\,  V_{\chi}\otimes_{\CC[G]}(\CC\!\cdot\! X_{L,S}) \cong W_\chi\otimes_{A_\CC}(\CC\cdot{_\Pi}X_{L,S})\end{align*}
(where the last isomorphism follows from the definition of ${_\Pi}X_{L,S}$) this shows that
\begin{equation*}\label{orderassum} {\rm ord}_{s=0}L_{S}(\chi,s) =  {\rm dim}_\CC(W_\chi\otimes_{A_\CC}(\CC\cdot{_\Pi}X_{L,S})) = r_\pi(\chi) + {\rm dim}_\CC ( W_\chi\otimes_{A_\CC}\ker(\pi)).\end{equation*}

This formula implies, in particular, that if $W_\chi\otimes_{A_\CC}\ker(\pi)\not= (0)$, then $L_{S}^{r_\pi(\chi)}(\chi,0)=0$. This in turn implies the claimed equality
$\iota_\Pi(\theta_{S}^{\pi}(0)) = e_\pi\iota_\Pi(\theta_{S}^{\pi}(0))$ since each character $\chi$ in $\Upsilon_\Pi$ for which $W_\chi\otimes_{A_\CC}\ker(\pi)$ vanishes corresponds (via the projection $\iota_\Pi$ in (\ref{key commute})) to a unique primitive central idempotent $e$ of $A$ for which $e(\QQ\cdot \ker(\pi))$ vanishes.

The second claim also follows from the same argument. This is because if ${\rm rk}_\mathcal{A}(Y_\pi) \not= r_{L,S}^\Pi$, then ${\rm rk}_\mathcal{A}(Y_\pi) < r_{L,S}^\Pi$ so that $\QQ\cdot \ker(\pi)$ contains a submodule isomorphic to $A$ and hence $W_\chi\otimes_{A_\CC}\ker(\pi)$ does not vanish for any $\chi$ in $\Upsilon_\Pi$. 
\end{proof}

\subsubsection{}Upon appropriate specialisation, the above construction recovers elements that have arisen in previous studies of Stark's conjecture. 

To explain this we note that, for $a\in \mathbb{N}_0$, the definition of $\G_{(a),S}$ ensures that the  function 
\[ \theta^{(a)}_{L/K,S}(s):= {\sum}_{\chi\in \G_{(a),S}} e_\chi L_{S}^{a\cdot\chi(1)}(\check\chi,s).\]
is holomorphic at $s=0$. The following result describes the link between the value at $s=0$ of this function, values of the form $\theta_S^\pi(0)$ and the element $\theta^{\langle a\rangle}_{L/K,S}(0)$ used in the Introduction. 



\begin{lemma}\label{exam 2} Fix $a\in \NN_0$ with $a < |S|$ and $I\in \wp^\ast_a(S)$. Write $\Pi$ for the bimodule $\Pi_I$ defined in Example \ref{exam1}(ii), and $\pi_I$ for the natural (surjective) map from ${_\Pi}X_{L,S}$ to ${_\Pi}Y_{L,I}$.  

\begin{itemize}
\item[(i)] The $\Lambda[G]e_I$-module ${_{\Pi}}Y_{L,I}$ is free of rank $a$, and so $\pi_I$ is a map of the form (\ref{surj Pi map}).
\item[(ii)] $\theta^{\pi_I}_{S}(0) = e_I(\theta^{(a)}_{L/K,S}(0))$. 

\item[(iii)] If $a < |S|-1$, then for $v\in S$ one has $\theta^{\langle a\rangle}_{L/K,S}(0)(1-e_v)e_{(a),S} = {\sum}_{I\in \wp_a(S,v)} \theta^{(a)}_{L/K,S}(0)e_I.$
\end{itemize}
\end{lemma}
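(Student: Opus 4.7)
The plan is to handle the three parts in sequence: part (i) by a direct analysis of the idempotent $e_I$, part (ii) as a formal consequence of (i) and the definitions, and part (iii) by combining (ii) with an order-of-vanishing argument.

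For part (i), the key observation is that for each $\psi\in\Upsilon_{\Pi_I} = \{\mathbf{1}\}\cup\{\psi\in\widehat G'_{a,S}: S_\psi = I\}$ and each $v\in I$, Remark \ref{na comms}(i) gives $G_v\subseteq\ker\psi$, so $he_\psi = e_\psi$ for every $h\in G_v$; summing over $\psi$ yields $(h-1)e_I = 0$ and hence, by centrality of $e_I$, that right multiplication by $h$ acts as the identity on $\Pi_I = \Lambda[G]e_I$. Since $Y_{L,\{v\}} = \Lambda[G]\otimes_{\Lambda[G_v]}\Lambda$, this identifies ${_{\Pi_I}}Y_{L,\{v\}} = \Lambda[G]e_I\otimes_{\Lambda[G_v]}\Lambda$ with $\Lambda[G]e_I$ itself, making ${_{\Pi_I}}Y_{L,I} = \bigoplus_{v\in I}{_{\Pi_I}}Y_{L,\{v\}}$ a free $\Lambda[G]e_I$-module of rank $a$. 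The surjectivity of $\pi_I$ then follows by tensoring the canonical $G$-equivariant surjection $X_{L,S}\twoheadrightarrow Y_{L,I}$ (well-defined because $a<|S|$ forces $I\subsetneq S$) with $\Pi_I$ over $\Lambda[G]$. Given (i), part (ii) is formal: $\CC\cdot Y_{\pi_I}$ is free of rank $a$ over $A_\CC$, so $r_{\pi_I}(\chi) = a\chi(1)$ for every $\chi\in\Upsilon_{\Pi_I}$ and hence $\theta_S^{\pi_I}(0) = \sum_{\chi\in\Upsilon_{\Pi_I}}e_\chi L_S^{a\chi(1)}(\check\chi,0)$; since $\Upsilon_{\Pi_I}\subseteq\widehat G_{(a),S}$ (with $\mathbf{1}\in\widehat G_{(a),S}$ as $a<|S|$) and $e_Ie_\chi = e_\chi$ precisely when $\chi\in\Upsilon_{\Pi_I}$, this coincides with $e_I\theta^{(a)}_{L/K,S}(0)$.

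For part (iii), the identity $e_\chi = (\chi(1)/|G|)\sum_g\check\chi(g)g$ rewrites the definition as $\theta^{\langle a\rangle}_{L/K,S}(0) = \sum_\chi L_S^{a\chi(1)}(\check\chi,0)e_\chi$, with sum over those $\chi$ for which $L_S^{a\chi(1)}(\check\chi,s)$ is holomorphic at $s=0$. Multiplying by $(1-e_v)e_{(a),S}$ kills the trivial character (since $e_\mathbf{1}(1-e_v)=0$) and retains precisely the $\chi\in\widehat G'_{(a),S}$ with $v\notin S_\chi$. The order-of-vanishing formula ${\rm ord}_{s=0}L_S(\check\chi,s) = \sum_{v'\in S}\chi(T_{G_{v'}})/|G_{v'}|$ recalled in the proof of Lemma \ref{stick vanishing} shows that $L_S^{a\chi(1)}(\check\chi,0)$ vanishes for any $\chi\in\widehat G'_{(a),S}\setminus\widehat G'_{a,S}$, because then either $|S_\chi|>a$ or condition (ii) of (\ref{new def}) fails, forcing the order to exceed $a\chi(1)$. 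Hence the LHS collapses to $\sum_{\chi\in\widehat G'_{a,S},\,v\notin S_\chi}L_S^{a\chi(1)}(\check\chi,0)e_\chi$. On the other side, using (ii) together with the vanishing $L_S^a(\mathbf{1},0) = 0$ (which follows from ${\rm ord}_{s=0}\zeta_{K,S}(s) = |S|-1 > a$), each term $\theta^{(a)}_{L/K,S}(0)e_I$ reduces to $\sum_{\chi\in\widehat G'_{a,S},\,S_\chi=I}L_S^{a\chi(1)}(\check\chi,0)e_\chi$; summing over $I\in\wp_a(S,v)$, and noting that $\bigcup_{\chi\in\widehat G'_{a,S}}S_\chi = S^a_{\rm min}$ so that the constraint $S_\chi\in\wp_a(S,v)$ reduces to $v\notin S_\chi$, recovers exactly the same expression.

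The main obstacle is the vanishing step in (iii): the characters in $\widehat G'_{(a),S}\setminus\widehat G'_{a,S}$ survive the idempotent multiplication on the LHS and must be shown to contribute zero leading values, which relies critically on condition (ii) of (\ref{new def}) via the order-of-vanishing computation, while the trivial character contribution on the RHS must be killed by the hypothesis $a < |S|-1$. Part (i) by contrast is essentially formal once the centrality of $e_I$ and the kernel property $G_v\subseteq\ker\psi$ for $v\in S_\psi$ are invoked.
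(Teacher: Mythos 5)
Your proof is correct and takes essentially the same approach as the paper's. The only organizational difference is in part (iii): the paper invokes the previously established idempotent identity (\ref{useful idem relation}) and then applies the vanishing result from Lemma \ref{fit vanishing}(i), whereas you re-derive the same conclusion by expanding both sides character-by-character; the key analytic inputs (the vanishing of $L_S^{a}(\mathbf{1},0)$ since $a<|S|-1$, and the order-of-vanishing argument showing $L_S^{a\chi(1)}(\check\chi,0)=0$ for $\chi\in\widehat{G}'_{(a),S}\setminus\widehat{G}'_{a,S}$) are identical.
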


\begin{proof} There is a canonical decomposition of $\Lambda[G]e_I$-modules
\[ {_{\Pi}}Y_{L,I} := \Lambda[G]e_I\otimes_{\ZZ[G]}Y_{L,I}={\bigoplus}_{v\in I}(\Lambda[G]e_I\otimes_{\ZZ[G_v]}\ZZ).\]
Since by Remark \ref{na comms}(i) one has $G_v\subseteq\ker(\psi)$ for each $v$ in $I$ and each $\psi$ with $S_\psi=I$, as well as for $\psi=\eins$, each summand in the above decomposition is $\Lambda[G]e_I$-free of rank one. This proves claim (i). 

To prove claim (ii) we note $\Upsilon_\Pi=\{\eins\}\cup\{\chi\in\G'_{a,S}:\,S_\chi=I\}$. 
By claim (i), one also has $r_{\pi_I}(\chi)=a\cdot\chi(1)$ for every $\chi\in\Upsilon_\Pi$. Hence,  since $e_I={\sum}_{\chi\in\Upsilon_\Pi}e_\chi$, the claim is true since  
\[ \theta^{\pi_I}_{S}(0) = {\sum}_{\chi\in \Upsilon_\Pi} e_\chi L_{S}^{a\cdot\chi(1)}(\check\chi,0)
=  e_I\bigl({\sum}_{\chi\in\G_{(a),S}} e_\chi L_{S}^{a\cdot\chi(1)}(\check\chi,0)\bigr) = e_I(\theta^{(a)}_{L/K,S}(0)).\]

The equality in claim (iii) is true since $\theta^{(a)}_{L/K,S}(0)e_{\bf 1} = 0$ (as $a < |S|-1$) and hence 
\begin{align*} {\sum}_{I\in \wp_a(S,v)} \theta^{(a)}_{L/K,S}(0)e_I =&\, {\sum}_{I\in \wp_a(S,v)} \theta^{(a)}_{L/K,S}(0)(e_I-e_{\bf 1})\\ 
=&\, \theta^{(a)}_{L/K,S}(0)(1-e_v)e_{a,S} = \theta^{\langle a\rangle}_{L/K,S}(0)(1-e_v)e_{(a),S}.
\end{align*}
Here the second equality follows from (\ref{useful idem relation}) and the third is true since $\theta^{(a)}_{L/K,S}(0)e_\psi = \theta^{\langle a\rangle}_{L/K,S}(0)e_\psi$ for $\psi\in \widehat G_{a,S}$ whilst (\ref{fe equality}) combines with the argument of Lemma \ref{fit vanishing}(i) to imply $\theta^{\langle a\rangle}_{L/K,S}(0)e_\psi = 0$ for $\psi\in\widehat G_{(a),S}\setminus \widehat G_{a,S}$.  
\end{proof}

\begin{example}\label{exam 2r}{\em \

\noindent{}(i)  Write $V$ for the subset of $S$ comprising places that split completely in $L/K$ and assume $V \not= S$. Then $\wp_{|V|}^*(S)=\{V\}$ and ${\rm ord}_{s=0}L_{S}(\chi,s)\geq |V|\cdot\chi(1)$ for $\chi\in\G$. For $\chi\neq\eins$, this is an equality if and only if $S_\chi=V$. It follows that $\theta^{\pi_V}_{S}(0)$ is equal to $e_V(\theta^{(|V|)}_{L/K,S}(0))=\theta^{(|V|)}_{L/K,S}(0)$ and so  %
coincides with the `higher-order Stickelberger element' studied in \cite{dals}. \


 \noindent{}(ii) 
In the setting of Example \ref{exam1}(iv) there is a natural surjection $\pi=\pi^{\psi}$ of $\mathcal{O}_\psi$-modules from 
${_{\Pi_\psi}}X_{L,S}$ to $({_{\Pi_\psi}}X_{L,S})_{\rm tf}$. 
For this map one has $\theta_{S}^{\pi}(0) = e_\psi L_{S}^\ast(\check\psi,0)$, where  $L^\ast_{S}(\check\psi,0)$ denotes the leading term of $L_{S}(\check\psi,s)$ at $s=0$.
}\end{example}

\subsection{The definition of generalised Stark elements}\label{Stark elements} 
%
\subsubsection{}\label{reduced review} For the reader's convenience, we quickly recall the relevant algebraic constructions from \cite{bses}. For this we note that, 
for some finite index set $I$, there is a product decomposition   
$A = {\prod}_{i\in I}A_i$,
in which each component $A_i$ is both simple and unique up to isomorphism. For each index $i$ fix a splitting field $E_i$ for $A_i$ over $\zeta(A_i)$ and a simple $E_i\otimes_{\zeta(A_i)}A_i$-module $V_i$ (cf. Remark \ref{fixedconventions} below regarding these choices).

Then, for each $r \in \mathbb{N}_0$, there exists a canonical `$r$-th reduced exterior power functor' 
$M \mapsto {{\bigwedge}}^r_AM$ 
from the category of finitely generated $A$-modules to the category of $\zeta(A)$-modules. This functor behaves well under scalar extension.   
For any finitely generated $A$-module $M$ and any integer $s$ with $0\leq s\leq r$, there exists a natural duality pairing
$${\bigwedge}_A^rM\times{\bigwedge}_{A^{\rm op}}^s\Hom_A(M,A)\to{\bigwedge}_A^{r-s}M; \quad (m,\varphi)\mapsto\varphi(m).$$

If one fixes ordered $E_i$-bases of the spaces $V_i$, then ordered subsets $\{m_j\}_{1\leq j\leq r}$ of $M$ and $\{\varphi_j\}_{1\leq j\leq r}$ of $\Hom_A(M,A)$ give rise to elements $\wedge_{j=1}^{j=r}m_j$ of ${\bigwedge}_A^rM$ and $\wedge_{j=1}^{j=r}\varphi_j$ of ${\bigwedge}_{A^{\rm op}}^r\Hom_A(M,A)$. It is known that $(\wedge_{i=1}^{i=r}\varphi_i)(\wedge_{j=1}^{j=r}m_j)$ 
belongs to $\zeta(A)$ and 
depends only on the subsets $\{m_j\}_{1\leq j\leq r}$ and $\{\varphi_j\}_{1\leq j\leq r}$ (see \cite[Lem. 4.10]{bses}). 


For a finitely generated (left) $\A$-module $N$, the `$r$-th reduced Rubin lattice' is the $\xi(\A)$-lattice obtained by setting 
$${{{{\bigcap}}}}_\mathcal{A}^rN:=\{a\in{\bigwedge}_A^r(\mathcal{F}\cdot N):(\wedge_{i=1}^{i=r}\varphi_i)(a)\in\xi(\mathcal{A})\text{ for all }\varphi_1,\ldots,\varphi_r\in\Hom_\mathcal{A}(N,\mathcal{A})\}.$$
This lattice is independent of the chosen bases of the $V_i$, and its basic properties are established in \cite[Th. 4.17]{bses}. 

\begin{remark}\label{fixedconventions}{\em Different choices of the modules $\{V_i\}_{i \in I}$ give rise to isomorphic reduced exterior powers (cf. \cite[Rem. 4.4]{bses}). 
In the present setting, all constructions are made over the fixed global field $K$ and will be normalised as follows. We fix an algebraic closure $K^c$ of $K$ and a representation ${\rm Gal}(K^c/K)\to{\rm GL}_{\chi(1)}(\CC)$ for each irreducible complex character of ${\rm Gal}(K^c/K)$ with open kernel. Then this family of representations gives rise to all of the data necessary to construct a functorial theory of reduced exterior powers over algebras of the form $\mathcal{F}[{\rm Gal}(M/K)]$ with $M$ a finite Galois extension of $K$ in $K^c$ (cf. \cite[Rem. 4.9]{bses}). In particular, for $M/K$ is abelian, the above constructions recover classical exterior powers and the `Rubin lattices' introduced in \cite{R} (cf. \cite[Rem. 4.18]{bses}).
}\end{remark}

\subsubsection{}
With $\pi$ as in (\ref{surj Pi map}), we now set $r:={\rm rk}_\A(Y_\pi)$ and fix an $r$-tuple $x_\bullet = (x_i)_{1\le i\le r}$ in $Y_\pi$. Then, by definition of the idempotent $e_\pi$, the Dirichlet regulator isomorphism $R_{L,S}$ from (\ref{lambdadef}) induces an isomorphism of $\zeta(e_\pi A_\CC)$-modules
\[ \lambda^{\pi}_{L,S}: e_\pi({{\bigwedge}}_{A_\CC}^{r} \CC\cdot {^\Pi}\mathcal{O}_{L,S}^\times) \stackrel{\sim}{\to} e_\pi({{\bigwedge}}_{A_\CC}^{r} \CC\cdot Y_{\pi})\]
and, since $\iota_\Pi(\theta_{S}^{\pi}(0)) = e_{\pi}\iota_\Pi(\theta_{S}^{\pi}(0))$ (by Lemma \ref{stick vanishing}), the element $\iota_\Pi(\theta_{S}^{\pi}(0)) \cdot \wedge_{i=1}^{i=r}x_i$ belongs to $\im(\lambda_{L,S}^{\pi})$. We may therefore make the following definition.

\begin{definition}\label{hnase} {\em The `generalised Stark element (relative to $x_\bullet$)' is the element
%
$$\varepsilon^\pi_{x_\bullet} := (\lambda^{\pi}_{L,S})^{-1}(\iota_\Pi(\theta_{S}^{\pi}(0)) \cdot \wedge_{i=1}^{i=r}x_i) \in e_\pi({{\bigwedge}}_{A_\CC}^{r} \CC\cdot {^\Pi}\mathcal{O}_{L,S}^\times).$$
%
}
\end{definition}

The following examples show that this definition extends several different constructions that have been made in the literature.

\begin{remark}\label{rubin-stark context}{\em  
If $a<|S|$ and $I$ belongs to $\wp_a^*(S)$, for each $v$ in $I$ we fix a place $w_v$ of $L$ above $v$. We then define an $a$-tuple $x_{I}=\{w_v\}_{v\in I}$ in $Y_{L,I}$ and obtain an element
\begin{equation}\label{eta def} \varepsilon_{L/K,S}^{I}:=\varepsilon^{\pi_I}_{x_{I}}\in e_I\cdot{\bigwedge}_{\CC[G]}^a (\CC\cdot\mathcal{O}^\times_{L,S}).\end{equation}
%

If $I$ comprises places that split completely in $L/K$, then $\varepsilon_{L/K,S}^I$ 
can be thought of as a `non-abelian Rubin-Stark element'.  
In particular, if $G$ is abelian and $I$ is the set of all places in $S$ that split completely in $L$, then Lemma \ref{exam 2}(ii) and Example \ref{exam 2r}(i) imply $\varepsilon_{L/K,S}^{I}$ is the element $\varepsilon_{S}$ of $\br\cdot{\wedge}^{r}_{\ZZ [G]}\mathcal{O}^\times_{L,S}$ used by Rubin in \cite[Conj. B$'$]{R}.} \end{remark}

\begin{remark}\label{stark context}{\em Assume the notation and hypotheses of Examples \ref{exam1}(iv) and \ref{exam 2r}(ii). Write $\pi$ for the map $\pi^\psi$, set $\mathcal{O}:= \mathcal{O}_\psi$ and write $E$ for the quotient field of $\mathcal{O}$ and $r$ for the rank of the (locally-free) $\mathcal{O}$-module $({_{\Pi_\psi}}X_{L,S})_{\rm tf}$. Let $\underline{b} = \{ b_{i}\}_{1\le i\le r}$ be any subset of $({_{\Pi_\psi}}X_{L,S})_{\rm tf}$ that is linearly independent over $\mathcal{O}$. Then the data $(\underline{b},\pi)$ can be used in Definition \ref{hnase} and, in this case, the Stark elements $\varepsilon^\pi_{\underline{b}}$ are related to those that occur in the refined Stark conjecture of \cite[Conj. 2.6.1]{dals}. In particular, if $r=1$ and $\psi$ is non-trivial, then the formula (\ref{fe equality}) implies that there is a unique place $v$ in $S$ for which $H^0(G_v,\Pi_\psi)$ is non-zero and, for suitable primitive idempotents $f_\psi$ of $E[G]$, the elements $\varepsilon^\pi_{\{|G|\cdot f_\psi (w)\}_{w\mid v}}$ recover the elements that are studied in Stark's original articles \cite{stark0, stark} and the subsequent article of Chinburg \cite{chin}. For more details of these connections see the discussion in \S\ref{cs}.}
\end{remark}

\subsubsection{}\label{L-invariants}

We end this section by clarifying the relationship between generalised Stark elements and the Artin $\mathscr{L}$-invariants defined in (\ref{key elements}). For this purpose, we assume given a second set $T$ as in (\ref{data fix}) and associate to any $\psi \in \Hom_G(\mathcal{O}_{L,S,T}^{\times},X_{L,S})$ a $\zeta(\RR[G])$-valued regulator
\begin{equation}\label{equivariantreg}R(\psi):={\rm Nrd}_{\RR[G]}((\RR\cdot\psi)\circ (R_{L,S})^{-1}).\end{equation}


For each $a \in \mathbb{N}_0$ with $a < |S|$ and each $I\in \wp^\ast_a(S)$ we 
consider the maps
\[\Hom_G(\mathcal{O}_{L,S,T}^\times, X_{L,S}) \xrightarrow{\nu_{S,I,*}}\Hom_G(\mathcal{O}_{L,S,T}^\times, Y_{L,I}) \xleftarrow{\nu_{I,*}} \Hom_G(\mathcal{O}_{L,S,T}^\times, \ZZ[G]^a)\]
where $\nu_{S,I}$ is the natural projection $X_{L,S} \to Y_{L,I}$ and $\nu_I$ the surjective homomorphism $\ZZ[G]^a \to Y_{L,I}$ sending each element $b_v$ of the standard basis $\{b_v\}_{v \in I}$ of $\ZZ[G]^a$ to $w_v$. We write $\Hom^I_G(\mathcal{O}_{L,S,T}^\times, X_{L,S})$ for the full pre-image of $\im(\nu_{I,*})$ under the map $\nu_{S,I,*}$.

\begin{remark}\label{easy split case}{\em If $S^a_{\rm min}$ comprises $a$ places that split completely in $L/K$, then $\wp^*_a(S) = \{I\}$ with $I := S^a_{\rm min}$ and, since the $G$-module $Y_{L,I}$ is free of rank $a$, one has 
\[ \Hom^I_G(\mathcal{O}_{L,S,T}^\times, X_{L,S}) = \Hom_G(\mathcal{O}_{L,S,T}^\times, X_{L,S}).\]
In general, the exponent of ${\rm cok}(\nu_{I,*})\cong {\rm Ext}^1_G(\mathcal{O}_{L,S,T}^\times,\ker(\nu_I))$  divides $|G|$ and so
\[ |G|\cdot \Hom_G(\mathcal{O}_{L,S,T}^\times, X_{L,S})\subseteq \Hom^I_G(\mathcal{O}_{L,S,T}^\times, X_{L,S})\]
for every 
$I$ in $\wp^\ast_a(S)$.}\end{remark}

\begin{lemma}\label{ell inv interpretation} For $a\in \mathbb{N}_0$ with $a< |S|$ and  $I\in \wp_a^\ast(S)$ one has
\begin{multline*}  \{({\wedge}_{v \in I}\varphi_v)(\varepsilon_{L/K,S}^I): \varphi_v\in\Hom_G(\mathcal{O}_{L,S,T}^{\times},\ZZ[G])\} =
\\ \xi(\ZZ[G])\cdot \{e_I(\theta^{(a)}_{L/K,S}(0))\cdot R(\psi): \psi \in \Hom^I_G(\mathcal{O}_{L,S,T}^{\times},X_{L,S})\}.\end{multline*}
\end{lemma}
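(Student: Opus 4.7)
The plan is to unfold $\varepsilon_{L/K,S}^{I}$ via Definition~\ref{hnase} and Lemma~\ref{exam 2}, apply the duality pairing of reduced exterior powers recalled in \S\ref{reduced review} to write the left-hand side as $e_I(\theta^{(a)}_{L/K,S}(0))$ times an explicit reduced norm, and then identify that reduced norm with $R(\psi)$ for an appropriate $\psi\in\Hom^I_G(\mathcal{O}_{L,S,T}^\times,X_{L,S})$. Concretely, Lemma~\ref{exam 2}(i) provides $\{w_v\}_{v\in I}$ as a $\Lambda[G]e_I$-basis of ${_{\Pi_I}}Y_{L,I}$, while Lemma~\ref{exam 2}(ii) gives $\theta_S^{\pi_I}(0)=e_I(\theta^{(a)}_{L/K,S}(0))$, so that
\[ \varepsilon_{L/K,S}^{I}=(\lambda^{\pi_I}_{L,S})^{-1}\bigl(e_I(\theta^{(a)}_{L/K,S}(0))\cdot {\wedge}_{v\in I}w_v\bigr). \]

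Given an $a$-tuple $\underline{\varphi}=(\varphi_v)_{v\in I}$ in $\Hom_G(\mathcal{O}_{L,S,T}^\times,\ZZ[G])$, each $\varphi_v$ induces an $\CC[G]e_I$-linear map on $\CC\cdot{^{\Pi_I}}\mathcal{O}_{L,S}^\times$. The duality pairing then yields
\[ ({\wedge}_{v\in I}\varphi_v)(\varepsilon_{L/K,S}^{I}) = e_I(\theta^{(a)}_{L/K,S}(0))\cdot{\rm Nrd}_{\CC[G]e_I}(M_{\underline{\varphi}}), \]
where $M_{\underline{\varphi}}$ is the $a\times a$ matrix with $(v',v)$-entry $\varphi_{v'}\bigl((\pi_I\circ R_{L,S})^{-1}(w_v)\bigr)$. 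To connect this to $R(\psi)$, I would first assemble $\underline{\varphi}$ into $\tilde\varphi\colon\mathcal{O}_{L,S,T}^\times\to\ZZ[G]^a$ with $\nu_I\circ\tilde\varphi\in\im(\nu_{I,*})$, then take any $\ZZ[G]$-linear lift $\psi$ of $\nu_I\circ\tilde\varphi$ along the surjection $\nu_{S,I}$ (which automatically lies in $\Hom^I_G$). The crucial observation is that $e_I\cdot\ker(\nu_{S,I})=0$: for $\chi\in\Upsilon_{\Pi_I}\setminus\{\eins\}$ one has $S_\chi=I$, so the dimension formula (\ref{fe equality})---invoked exactly as in the proof of Lemma~\ref{fit vanishing}(i)---forces $e_\chi\cdot\ker(\nu_{S,I})=0$; the trivial character component is then handled separately using the definition of $X_{L,S}$. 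Consequently $e_I\nu_{S,I}$ is an isomorphism, and it transports the restriction of $\psi\circ R_{L,S}^{-1}$ to the $e_I$-component of $\RR\cdot X_{L,S}$ into precisely the endomorphism represented by $M_{\underline{\varphi}}$; passing to reduced norms gives ${\rm Nrd}_{\CC[G]e_I}(M_{\underline{\varphi}})=e_I\cdot R(\psi)$ and hence the inclusion $\subseteq$.

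For the reverse inclusion, I would begin with $\psi\in\Hom^I_G$, choose $\tilde\psi=(\varphi_v)_{v\in I}$ satisfying $\nu_I\circ\tilde\psi=\nu_{S,I}\circ\psi$ (which exists by the very definition of $\Hom^I_G$), and then run the same computation to obtain $({\wedge}_{v\in I}\varphi_v)(\varepsilon_{L/K,S}^{I})=e_I(\theta^{(a)}_{L/K,S}(0))\cdot R(\psi)$. The $\xi(\ZZ[G])$-multiplier on the right is then absorbed by the natural $\xi(\ZZ[G])$-action, since rescaling any single $\varphi_v$ by an element of $\xi(\ZZ[G])$ rescales the corresponding reduced norm, and correspondingly varies $R(\psi)$ through a $\xi(\ZZ[G])$-orbit. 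The step I expect to be the main obstacle is the precise matching in the second paragraph---verifying that the matrix whose reduced exterior power pairing with $\varepsilon_{L/K,S}^{I}$ is computed above coincides, after transport by $e_I\nu_{S,I}$, with the matrix of $\psi\circ R_{L,S}^{-1}$ on the $e_I$-part; this requires careful bookkeeping of the reduced exterior power conventions of Remark~\ref{fixedconventions} and of the vanishing of $e_I\cdot\ker(\nu_{S,I})$, together with the (harmless) observation that the $\Ext^1_G$-obstruction to lifting $\nu_I\circ\tilde\varphi$ is killed by $|G|$, so that, after absorbing an element of $\xi(\ZZ[G])$ if necessary, an integral lift $\psi$ always exists.
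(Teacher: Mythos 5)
Your proposal follows the same overall plan as the paper's proof: unfold $\varepsilon_{L/K,S}^I$ via Definition \ref{hnase} and Lemma \ref{exam 2}(ii), apply the reduced exterior power duality pairing to rewrite the left-hand side as $e_I(\theta^{(a)}_{L/K,S}(0))$ times a reduced norm, and match that reduced norm with $e_I\cdot R(\psi)$ using the fact that $e_I$ kills the kernel of $\nu_{S,I}$. That much is correct.

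The gap is in your construction of $\psi$ from $\varphi_\bullet$. You propose to form $\tilde\varphi=(\varphi_v)_v$ and then ``take any $\ZZ[G]$-linear lift $\psi$ of $\nu_I\circ\tilde\varphi$ along the surjection $\nu_{S,I}$'', and you flag the $\Ext^1_G$-obstruction to the existence of such a lift, suggesting it is ``killed by $|G|$'' and can be ``absorbed into $\xi(\ZZ[G])$''. This is both unnecessary and, if taken literally, incorrect: multiplying by $|G|$ would only establish a weaker containment, not the claimed set equality. The point you miss is that no lift needs to be selected abstractly, because one can always be written down explicitly. Since $a<|S|$, the set $S\setminus I$ is non-empty, so one may fix a place $w_I\in(S\setminus I)_L$ and define $\psi_{\varphi_\bullet}(x):=\sum_{v\in I}\varphi_v(x)\cdot(w_v-w_I)\in X_{L,S}$. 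This is a genuine $G$-homomorphism $\mathcal{O}^\times_{L,S,T}\to X_{L,S}$, and since $\nu_{S,I}(w_v-w_I)=w_v$ one has $\nu_{S,I,*}(\psi_{\varphi_\bullet})=\nu_{I,*}(\tilde\varphi)$, so $\psi_{\varphi_\bullet}\in\Hom^I_G$ automatically — no obstruction arises. The same auxiliary place $w_I$ also makes the computation of the pairing cleaner: one uses the matrix $M(\varphi_\bullet):=\bigl((\varphi_v\circ R_{L,S}^{-1})(w_{v'}-w_I)\bigr)_{v,v'\in I}$, noting that $\{e_I(w_v-w_I)\}_v$ is an $\RR[G]e_I$-basis of $e_I(\RR\cdot X_{L,S})$, so $e_I\cdot M(\varphi_\bullet)$ is literally the matrix of $e_I\bigl((\RR\cdot\psi_{\varphi_\bullet})\circ R_{L,S}^{-1}\bigr)$ in that basis, which gives the identity ${\rm Nrd}(M(\varphi_\bullet))e_I=e_IR(\psi_{\varphi_\bullet})$ directly, avoiding the awkward and not quite well-defined expression $(\pi_I\circ R_{L,S})^{-1}(w_v)$ in your matrix. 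Once the explicit $\psi_{\varphi_\bullet}$ is in hand, both inclusions follow as you outline, using that $e_I$ annihilates $\ker(\nu_{S,I,*})$ so that $e_IR(\psi)=e_IR(\psi_{\varphi_\bullet})$ whenever $\nu_{S,I,*}(\psi)=\nu_{S,I,*}(\psi_{\varphi_\bullet})$.
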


\begin{proof} Fix $w_I \in(S\setminus I)_L$. For each $\varphi_\bullet := \{\varphi_v\}_{v \in I} \subset \Hom_G(\mathcal{O}_{L,S,T}^{\times},\ZZ[G])$ the definition (\ref{eta def}) of $\varepsilon_{L/K,S}^I$ combines with Lemma \ref{exam 2}(ii) and with the definition of $\pi_I$ to imply
\begin{align}\label{first calc} ({\wedge}_{v \in I}\varphi_v)(\varepsilon_{L/K,S}^I) = &\, ({\wedge}_{v \in I}\varphi_v)((\lambda^{\pi_I}_{L,S})^{-1}(e_I(\theta^{(a)}_{L/K,S}(0))\cdot {\wedge}_{v' \in I}w_{v'}))\\
= &\, e_I(\theta^{(a)}_{L/K,S}(0))\cdot ({\wedge}_{v \in I}\varphi_v)\left(({\bigwedge}^a_{\RR[G]} R_{L,S}^{-1})({\wedge}_{v' \in I}(w_{v'}-w_I))\right)\notag\\
= &\, e_I(\theta^{(a)}_{L/K,S}(0))\cdot {\rm Nrd}_{\RR[G]}(M(\varphi_\bullet)),\notag\end{align}
where we use the matrix 
\[ M(\varphi_\bullet) := ((\varphi_v\circ R_{L,S}^{-1})(w_{v'}-w_I))_{v,v' \in I} \in {\rm M}_{a}(\RR[G]),\]
so that the third equality in (\ref{first calc}) follows from \cite[Lem. 4.10]{bses}.

For $v \in I$ we now write $\psi_{v}$ for the composite homomorphism $\mathcal{O}_{L,S,T}^\times \to \ZZ[G]\to X_{L,S}$ where the first arrow is $\varphi_v$ and the second sends the trivial element of $G$ to $w_v-w_I$, and we then define $\psi_{\varphi_\bullet}$ to be the sum ${\sum}_{v \in I}\psi_v$. We note $\{e_I(w_v-w_I)\}_{v \in I}$ is an $\RR[G]e_I$-basis of $e_I(\RR\cdot X_{L,S})$ and that, with respect to this basis, the matrix of the endomorphism obtained by restricting $(\RR\otimes_\ZZ\psi_{\varphi_\bullet})\circ R_{L,S}^{-1}$ to $e_I(\RR\cdot X_{L,S})$ is equal to $e_I\cdot M(\varphi_\bullet)$. This implies that 
\[ e_I\cdot {\rm Nrd}_{\RR[G]}(M(\varphi_\bullet))= e_I\cdot {\rm Nrd}_{\RR[G]}((\RR\cdot\psi)\circ (R_{L,S})^{-1}) = e_I\cdot R(\psi_{\varphi_\bullet})\]
and hence, by (\ref{first calc}), that
\[ ({\wedge}_{v \in I}\varphi_v)(\varepsilon_{L/K,S}^I) =    e_I(\theta^{(a)}_{L/K,S}(0))\cdot R(\psi_{\varphi_\bullet}).\]

Next we note $e_I$ annihilates $\ker(\nu_{S,I,*})$ and hence that for any homomorphisms $\psi$ and $\psi'$ in $\Hom_G(\mathcal{O}_{L,S,T}^{\times},X_{L,S})$ one has $e_I\cdot R(\psi) = e_I\cdot R(\psi')$ whenever $\nu_{S,I,*}(\psi) = \nu_{S,I,*}(\psi')$.

To deduce the claimed equality from the last displayed equality it is thus enough to note $\Hom^I_G(\mathcal{O}_{L,S,T}^{\times},X_{L,S})$ is the subset of $\Hom_G(\mathcal{O}_{L,S,T}^{\times},X_{L,S})$ comprising all $\psi$ for which there exists a homomorphism $\psi_{\varphi_\bullet}$ of the above sort with $\nu_{S,I,*}(\psi) = \nu_{S,I,*}(\psi_{\varphi_\bullet})$.
\end{proof}

\section{Weil-Stark elements and the leading term conjecture}\label{5}

In this section we first review the leading term conjecture  ${\rm LTC}(L/K)$ for $L/K$, as stated in \cite{dals}, and the construction of Weil-Stark elements from \cite{bms1}. 

We then prove that, if suitable components of ${\rm LTC}(L/K)$ are valid, the corresponding generalised Stark elements are Weil-Stark elements.  

We continue to fix data $\Lambda$, $A$, $\A$, $\Pi$ and $\pi$ as in \S \ref{dataasin}.


\subsection{Review of the leading term conjecture}\label{eTNCreview} 

\subsubsection{}We write $D(\A)$ for the derived category of $\A$-modules and $D^{{\rm lf}}(\A)$ for its full triangulated subcategory comprising complexes isomorphic to a bounded complex of finitely generated locally-free $\A$-modules. (We note in passing that in the case $\A=\ZZ[G]$, a theorem of Swan \cite{swan} implies that $D^{{\rm lf}}(\ZZ[G])$ coincides with the full subcategory of $D(\ZZ[G])$ comprising complexes that are perfect.)

We fix sets $S$ and $T$ as in (\ref{data fix}) and use the `Weil-\'etale cohomology with compact support' complex $R\Gamma_{c}((\co_{L,S})_\W,\ZZ)$ constructed in \cite[\S2.2]{bks}. We recall that this complex has a natural interpretation in terms of Lichtenbaum's theory of Weil-\'etale cohomology (see \cite[Rem. 4.6]{bms1}) and defines an object of $D^{{\rm lf}}(\ZZ[G])$ up to canonical isomorphism. The complex 
\[ C_{L,S}:=R\Hom_\ZZ(R\Gamma_{c}((\co_{L,S})_\W,\ZZ),\ZZ)[-2]\]
is therefore also an object of $D^{{\rm lf}}(\ZZ[G])$ that is defined up to canonical isomorphism. 


Since each place $v$ in $T$ is unramified in $L$, there exists an exact sequence of $G$-modules 
\begin{equation}\label{resolve kappa} 0 \to \ZZ[G] \xrightarrow{x\to x(1-\sigma_v^{-1}{\rm N}v)} \ZZ[G] \to {\bigoplus}_{w \in \{v\}_L}\kappa^\times_w\to 0,\end{equation}
in which $\kappa_w$ denotes the residue field of $w$. This resolution implies $\bigl({\bigoplus}_{w \in \{v\}_L}\kappa^\times_w\bigr)[0]$ defines an object of $D^{\rm lf}(\ZZ[G])$ and hence that there exists an exact triangle in $D^{\rm lf}(\ZZ[G])$ of the form 
\begin{equation}\label{Ttriangle}C_{L,S,T}\longrightarrow C_{L,S}\longrightarrow\bigl({\bigoplus}_{w\in T_L}\kappa_w^\times\bigr)[0]\longrightarrow,\end{equation}
where the second arrow is the canonical morphism constructed in \cite[\S2.2]{bks}. In addition, by the 
argument of \cite[Lem. 4.5]{bms1}, one knows that 
\[ {_\Pi}C_{L,S,T}:=\Pi\otimes_{\ZZ[G]}^{\mathbb{L}}C_{L,S,T}\]
defines an object of $D^{\rm lf}(\A)$ that is acyclic outside degrees zero and one and such that $H^0({_\Pi}C_{L,S,T}) = {^\Pi}\mathcal{O}_{L,S,T}^\times$ and $H^1({_\Pi}C_{L,S,T})= {_\Pi}\mathcal{S}_S^T(L)^{{\rm tr}}$. 

\subsubsection{}
The equivariant $L$-function associated to the data $L/K, S$ and $T$ is defined by setting
$$\theta_{L/K,S,T}(s):={\sum}_{\chi\in \widehat G}L_{S,T}(\check \chi,s)e_\chi,$$
where $L_{S,T}(\check \chi,s)$ is the $S$-truncated $T$-modified Artin $L$-function for $\check\chi$.
The leading term of $\theta_{L/K,S,T}(s)$ at $s=0$ is then defined by setting
\begin{equation*}\label{thetastar}\theta_{L/K,S,T}^\ast(0):={\sum}_{\chi\in \widehat G}L_{S,T}^\ast(\check \chi,0)e_\chi\in\zeta(\RR[G])^\times.\end{equation*}

For any extension field $E$ of $\calF$ we write $A_E$ for the (semisimple) $E$-algebra $E\otimes_\calF A$ and $K_0(\A,A_E)$ for the relative algebraic $K$-group
 of the inclusion $\A\subset A_E$. This group is functorial in the pair $(\A,A_E)$ and also
 sits in a long exact sequence of relative $K$-theory 
\begin{equation}\label{relative}K_1(\A) \longrightarrow K_1(A_E)
\xrightarrow{\partial_{\A,A_E}} K_0(\A,A_E)\\
\longrightarrow K_0(\A).
\end{equation}
We recall that to each pair $(C,t)$ comprising an object $C$ of $D^{{\rm lf}}(\A)$ for which $E\otimes_\Lambda C$ is acyclic outside degrees $b$ and $b+1$ for some integer $b$ and an isomorphism of $A_E$-modules $t:E\otimes_\Lambda H^b(C) \cong E\otimes_\Lambda H^{b+1}(C)$ one can define a canonical `refined Euler characteristic' $\chi_{\A}(C,t)$ in $K_0(\A,A_E)$. We further recall that, if either $E=\CC$, or if $\Lambda\subset\QQ$ and $E\subseteq\RR$, then there exists a canonical `extended boundary homomorphism' of abelian groups 
\[ \delta_{\A,A_E}: \zeta(A_E)^\times \to K_0(\A,A_E)\]
with the property that the connecting homomorphism $\partial_{\A,A_E}$ in (\ref{relative}) factors as the composite 
$\partial_{\A,A_E} = \delta_{\A,A_E}\circ {\rm Nrd}_{A_E}$. 
If $\A = \ZZ[G]$ and $A_E = \RR[G]$, then we often abbreviate $\partial_{\A,A_\CC}$ and $\delta_{\A,A_\CC}$ to $\partial_G$ and $\delta_G$ respectively. 

\subsubsection{} We now provide a convenient reinterpretation of ${\rm LTC}(L/K)$.

\begin{proposition}\label{ltc2} The following claims are valid.\begin{itemize}
\item[(i)] ${\rm LTC}(L/K)$ is valid if and only if, in $K_0(\ZZ[G],\RR[G])$, one has
$$\delta_{G}(\theta_{L/K,S,T}^*(0))) = \chi_{\ZZ[G]}(C_{L,S,T},R_{L,S}).$$
\item[(ii)] If ${\rm LTC}(L/K)$ is valid then, in $K_0(\mathcal{A},A_\CC)$, one has
\begin{equation}\label{ltcPi} \delta_{\mathcal{A},A_\CC}(\iota_\Pi(\theta_{L/K,S,T}^*(0))) = \chi_{\mathcal{A}}({_\Pi}C_{L,S,T},R^\Pi_{L,S}),\end{equation}
where $R^\Pi_{L,S}$ denotes the the restriction of $\CC\cdot R_{L,S}$ to $\CC\cdot {_\Pi}\mathcal{O}_{L,S,T}^\times$.
\item[(iii)]  Assume that $A$ is a direct factor  of $\QQ[G]$ and (\ref{ltcPi}) is valid with $\Pi = \mathcal{A}$. Then, for every normal subgroup $H$ of $G$,  (\ref{ltcPi}) is also valid with $(L/K,A,\A,\A)$ replaced by $(L^H/K,\rho(A),\rho(\A),\rho(\A))$, where $\rho:\QQ[G]\to\QQ[G/H]$ is the canonical map.
\end{itemize}
\end{proposition}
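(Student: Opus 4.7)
The plan for part (i) is essentially to unpack definitions. The leading term conjecture \({\rm LTC}(L/K)\) as formulated in \cite{dals} is already phrased in terms of an equality in \(K_0(\ZZ[G], \RR[G])\) comparing an analytic class (the image under \(\delta_G\) of the leading term of the equivariant \(L\)-function) with an algebraic class (a refined Euler characteristic of a canonical complex trivialised by the Dirichlet regulator). I would identify the complex used in \cite{dals} with \(C_{L,S,T}\) up to canonical isomorphism in \(D^{\rm lf}(\ZZ[G])\), and verify that the prescribed trivialisation agrees with \(R_{L,S}\) under the canonical identifications of \(H^0(C_{L,S,T})\) with \(\mathcal{O}_{L,S,T}^\times\) and of \(H^1(C_{L,S,T})\) with \(\mathcal{S}_S^T(L)^{\rm tr}\). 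Both matchings are direct consequences of the construction of \(C_{L,S,T}\) recalled above and of the exact sequences \eqref{selmer lemma I} and \eqref{selmer lemma II}.

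For part (ii) the strategy is functorial scalar extension. Property \((\Pi_1)\) ensures that the derived functor \(\Pi \otimes_{\ZZ[G]}^{\mathbb{L}} (-)\) sends \(D^{\rm lf}(\ZZ[G])\) into \(D^{\rm lf}(\A)\) and transports the pair \((C_{L,S,T}, R_{L,S})\) to the pair \(({_\Pi}C_{L,S,T}, R^\Pi_{L,S})\). The naturality of refined Euler characteristics under this scalar extension sends the class \(\chi_{\ZZ[G]}(C_{L,S,T}, R_{L,S})\) to \(\chi_{\A}({_\Pi}C_{L,S,T}, R^\Pi_{L,S})\) under the induced homomorphism \(K_0(\ZZ[G], \RR[G]) \to K_0(\A, A_\CC)\). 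Simultaneously, the commutativity of the diagram \eqref{key commute}, together with the factorisations \(\partial_G = \delta_G \circ {\rm Nrd}_{\RR[G]}\) and \(\partial_{\A, A_\CC} = \delta_{\A, A_\CC} \circ {\rm Nrd}_{A_\CC}\), shows that this same map carries \(\delta_G(\theta_{L/K,S,T}^*(0))\) to \(\delta_{\A, A_\CC}(\iota_\Pi(\theta_{L/K,S,T}^*(0)))\). Combining these two compatibilities, the equality from (i) implies \eqref{ltcPi}.

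For part (iii) I would exploit a Galois-descent property of the Weil-\'etale construction. Specifically, I claim the construction of \(C_{L,S,T}\) in \cite{bks} produces a canonical isomorphism \(\ZZ[G/H] \otimes_{\ZZ[G]}^{\mathbb{L}} C_{L,S,T} \cong C_{L^H,S,T}\) in \(D^{\rm lf}(\ZZ[G/H])\), which refines the descent isomorphism \eqref{Seldescent} on cohomology and is compatible with the regulator trivialisations. Granting this, I would view \(\rho(\A)\) as a \((\rho(\A), \ZZ[G])\)-bimodule via the composite \(\ZZ[G] \to \ZZ[G/H] \to \rho(\A)\) and apply the strategy of (ii): the image of the \(L/K\) equation \eqref{ltcPi} (with \(\Pi = \A\)) under the natural map \(K_0(\A, A_\CC) \to K_0(\rho(\A), \rho(A)_\CC)\) is, by associativity of derived tensor and functoriality of refined Euler characteristics, precisely the \(L^H/K\) analogue of \eqref{ltcPi} with \(\Pi = \rho(\A)\). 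The hypothesis of (iii) then delivers the desired conclusion.

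The principal technical obstacle lies in the Weil-\'etale descent step used in (iii): one must promote the Selmer-group descent \eqref{Seldescent} to an isomorphism at the level of complexes in \(D^{\rm lf}(-)\), and check that it carries \(R_{L,S}\) to \(R_{L^H,S}\). This amounts to a careful tracking of the functoriality of the construction of \cite{bks} under change of extension, via a standard spectral sequence and base-change argument. The only other non-routine point is the precise identification of complexes and trivialisations in (i), which is essentially built into the formulation of LTC.
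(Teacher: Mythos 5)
Your treatments of (ii) and (iii) are essentially what the paper does: for (ii) one uses the commutative square relating $\delta_G$, $\delta_{\A,A_\CC}$, $\iota_\Pi$ and the relative $K$-group map $\mu^{\rm rel}_\Pi$ induced by $\Pi\otimes^{\mathbb{L}}_{\ZZ[G]}-$, together with the fact that this map carries the refined Euler characteristic of $(C_{L,S,T},R_{L,S})$ to that of $({_\Pi}C_{L,S,T},R^\Pi_{L,S})$; for (iii) the paper likewise relies on Galois descent for the triple $(\theta^*,C,R)$ (which it calls "well-known") combined with the observation that $\iota_{\rho(\A)}$ factors through $\iota_\Pi$, which is close in spirit to your derived-tensor route through $K_0(\A,A_\CC)\to K_0(\rho(\A),\rho(A)_\CC)$.

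However, there is a genuine gap in your proposed argument for (i). You take it for granted that "the complex used in \cite{dals}" is "canonically isomorphic" to $C_{L,S,T}$ and that its trivialisation "agrees with $R_{L,S}$". It is not, and it does not. The formulation of ${\rm LTC}(L/K)$ in \cite[\S 6.1]{dals} is the equality
\[\delta_G(\theta^*_{L/K,\Sigma}(0)) = \chi_{\ZZ[G]}(C_{L,\Sigma},R_{L,\Sigma}),\]
where $\Sigma = S\cup S'$ for an auxiliary set $S'$ of completely split places chosen so that ${\rm Cl}_\Sigma(L)$ vanishes, and there is no $T$-modification at all. So one must actually compare $(\theta^*_{L/K,\Sigma},C_{L,\Sigma},R_{L,\Sigma})$ with $(\theta^*_{L/K,S,T},C_{L,S,T},R_{L,S})$: the $L$-function, the complex and the regulator all change. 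The paper does this by exploiting the exact triangles (\ref{Ttriangle}) and (\ref{Sigmatriangle}) and the additivity of refined Euler characteristics, yielding
\[\chi_{\ZZ[G]}(C_{L,\Sigma},R_{L,\Sigma}) = \delta_G(\pi_{S'}) - \delta_G(\gamma_T) + \chi_{\ZZ[G]}(C_{L,S,T},R_{L,S}),\]
with $\pi_{S'} = \prod_{v\in S'}{\rm Nrd}_{\RR[G]}(\log({\rm N}v))$, and verifying that precisely the same correction terms $\delta_G(\pi_{S'}) - \delta_G(\gamma_T)$ appear on the $L$-function side. This is a non-trivial computation (using the cohomology sequence of (\ref{Sigmatriangle}), the resolutions (\ref{resolve kappa}), and a commuting square relating $R_{L,S}$, $R_{L,\Sigma}$ and a logarithmic map on $Y_{L,S'}$), not something that follows from a canonical identification of complexes. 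Your sketch would simply not produce the correction terms, and so cannot establish the stated equivalence.
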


\begin{proof} We set $C:=C_{L,S,T}$ and first explain how to deduce claim (ii) from claim (i).

We write $\mu^{\rm rel}_\Pi:K_0(\ZZ[G],\RR[G])\to K_0(\A,A_\CC)$ for the canonical homomorphism that is induced by the functor ${_\Pi}-$.
Then one need only use the commutative diagram
\begin{equation*}\begin{CD}
\zeta(\RR[G])^\times @> \delta_G  >> K_0(\ZZ[G],\RR[G])\\
@V \iota_\Pi VV @VV \mu^{\rm rel}_\Pi V\\
\zeta(A_\CC)^\times @> \delta_{\mathcal{A},A_\CC} >> K_0(\mathcal{A},A_\CC)
\end{CD}\end{equation*}
(following from the commutativity of (\ref{key commute}) and the naturality of connecting homomorphisms in relative $K$-theory) and the fact that the map $\mu^{\rm rel}_\Pi$ sends $\chi_{\ZZ[G]}(C,R_{L,S})$ to $\chi_{\mathcal{A}}({_\Pi}C,R^\Pi_{L,S})$

We now proceed to prove claim (i). 
We fix, as we may, a finite set $S'$ of places of $K$ that split completely in $L/K$, disjoint from $S\cup T$, and large enough that ${\rm Cl}_\Sigma(L)$ vanishes for $\Sigma:=S\cup S'$ (see \cite[Lem. 5.1.1]{dals}).
Then, by making explicit the result of \cite[Prop. 2.4 (ii)]{bks}, one finds that there is a canonical exact triangle in $D^{\rm lf}(\ZZ[G])$ 
\begin{equation}\label{Sigmatriangle}C_{L,S}\longrightarrow C_{L,\Sigma}\longrightarrow Y_{L,S'}[0]\oplus Y_{L,S'}[-1]\longrightarrow,\end{equation}
that induces the exact cohomology sequence
$$0\to\co_{L,S}^\times\to\co_{L,\Sigma}^\times\to Y_{L,S'}\to\mathcal{S}_S(L)^{\rm tr}\to X_{L,\Sigma}\to Y_{L,S'}\to 0,$$
where the first arrow is inclusion, the second arrow is induced by taking valuations at each place in $S'_L$, the third arrow is the composition $Y_{L,S'}\to {\rm Cl}_S(L)\to \mathcal{S}_S(L)^{\rm tr}$, the fourth arrow is the composition $\mathcal{S}_S(L)^{\rm tr}\to X_{L,S}\to X_{L,\Sigma}$ and the fifth arrow is canonical.


We next recall that ${\rm LTC}(L/K)$ is formulated in \cite[\S 6.1]{dals} as the equality
$$\delta_{G}(\theta_{L/K,\Sigma}^*(0))) = \chi_{\ZZ[G]}(C_{L,\Sigma},R_{L,\Sigma})$$
and that, setting $\pi_{S'} := {\prod}_{v\in S'}{\rm Nrd}_{\RR[G]}(\log({\rm N}v))$,  one has
\[ \delta_{G}(\theta_{L/K,\Sigma}^*(0))) =  \delta_G\bigl(\pi_{S'}\bigr)+\delta_{G}(\theta_{L/K,S}^*(0)))=\delta_G\bigl(\pi_{S'}\bigr)-\delta_G(\gamma_T)+\delta_{G}(\theta_{L/K,S,T}^*(0))).\]

It will therefore be enough to show that
\begin{equation}\label{enoughECs}\chi_{\ZZ[G]}(C_{L,\Sigma},R_{L,\Sigma})= \delta_G\bigl(\pi_{S'}\bigr)-\delta_G(\gamma_T)+\chi_{\ZZ[G]}(C_{L,S,T},R_{L,S}).\end{equation}
In turn, this equality will follow upon applying the additivity properties of refined Euler characteristics (as in \cite[Lem. A.1.2]{dals}) to the exact triangles (\ref{Sigmatriangle}) and (\ref{Ttriangle}), after recalling the exactness of the sequences of the form (\ref{resolve kappa}) and observing the commutativity of the canonical exact diagram of $\RR[G]$-modules
\begin{equation*}
\begin{CD}
0 @> >>  \RR\cdot\co_{L,S}^\times @>  >>  \RR\cdot\co_{L,\Sigma}^\times @>  >> \RR\cdot Y_{L,S'} @> >> 0\\
@. @V R_{L,S} VV @V R_{L,\Sigma} VV @VV {\rm Log}_{S'} V @.\\
0 @> >>  \RR\cdot X_{L,S} @>  >> \RR\cdot X_{L,\Sigma}@>  >> \RR\cdot Y_{L,S'} @> >> 0,
\end{CD}\end{equation*}
where ${\rm Log}_{S'}$ maps a place $w$ above a place $v\in S'$ to $\log({\rm N}v)w$.

Indeed, if we set $\kappa_T^\times:={\bigoplus}_{w\in T_L}\kappa_w^\times$, then one has
\begin{align*}\chi_{\ZZ[G]}(C_{L,\Sigma},R_{L,\Sigma})=&\chi_{\ZZ[G]}(Y_{L,S'}[0]\oplus Y_{L,S'}[-1],{\rm Log}_{S'})+\chi_{\ZZ[G]}(C_{L,S},R_{L,S})\\
=&\delta_G({\rm Nrd}_{\RR[G]}({\rm Log}_{S'}))+\chi_{\ZZ[G]}(C_{L,S},R_{L,S})\\
=&\delta_G\bigl(\pi_{S'}\bigr)+\chi_{\ZZ[G]}(\kappa_T^\times[0],0)+\chi_{\ZZ[G]}(C_{L,S,T},R_{L,S})\\
=&\delta_G\bigl(\pi_{S'}\bigr)-\chi_{\ZZ[G]}(\kappa_T^\times[-1],0)+\chi_{\ZZ[G]}(C_{L,S,T},R_{L,S})\\
=&\delta_G\bigl(\pi_{S'}\bigr)-\delta_G(\gamma_T)+\chi_{\ZZ[G]}(C_{L,S,T},R_{L,S}).
\end{align*}
This verifies the equality (\ref{enoughECs}) and thus completes the proof of claim (ii).

Finally, we observe that in the setting of claim (iii), the map $\iota_{\rho(\A)}$ factors through $\iota_\Pi$. Given this observation, and the result of claim (ii), the result of claim (iii) can be derived as a direct consequence of the (well-known) behaviour of $\theta_{L/K,S,T}^*(0)$, $C_{L,S,T}$ and $R_{L,S}$ under change of extension. 
\end{proof}

\begin{remark}{\em If $L$ is a number field, then ${\rm LTC}(L/K)$ is equivalent to the equivariant Tamagawa number conjecture for $(h^0({\rm Spec}(L)),\ZZ[G])$ (cf. \cite[Rem. 6.1.1(i)]{dals}). In this way, the result of Proposition \ref{ltc2}(iii) corresponds to a property of equivariant Tamagawa numbers under Galois descent (cf. \cite[Rem. 6.1.1(ii)]{dals}).}\end{remark} 

\begin{remark}\label{Picomponent}{\em We say that `${\rm LTC}(L/K,\Pi)$ is valid' if the displayed equality in Proposition \ref{ltc2}(ii) is valid (with the order $\mathcal{A}$, and hence algebra $A$, implied by the fact $\Pi$ is regarded as a left $\mathcal{A}$-module). In particular, if $\Pi = \ZZ[G]$ (as a $(\ZZ[G],\ZZ[G])$-bimodule), then ${\rm LTC}(L/K,\Pi)$ is valid if and only if ${\rm LTC}(L/K)$ is valid. However, if, in the setting of Examples \ref{exam1}(iv) and \ref{exam 2r}(ii) and Remark \ref{stark context}, one has $\Pi = \Pi_\psi$ for $\psi\in \widehat{G}$, then the argument of \cite[\S12.1]{dals} shows that ${\rm LTC}(L/K,\Pi)$ is valid if and only if $\psi$ validates the `Strong-Stark Conjecture' of Chinburg \cite{chinburgomega}. 
It follows that ${\rm LTC}(L/K,\Pi)$ depends critically on $\Pi$ 
and can be much weaker than ${\rm LTC}(L/K)$. }\end{remark}

\subsection{Weil-Stark elements}
\subsubsection{}
We first recall the construction of Weil-Stark elements.  
%
%
We write $D^{\rm lf}(\A)_{\rm is}$ for the subcategory of $D^{\rm lf}(\A)$ in which morphisms are restricted to be isomorphisms, and $\calP(C)$ for the category of graded invertible modules over a commutative ring $C$ (though we omit any reference to the gradings). Then \cite[Th. 5.2]{bses} gives a `reduced determinant functor' $$\D_{\A}:D^{\rm lf}(\A)_{\rm is}\to\calP(\xi(\A)),$$ that respects exact triangles in $D^{\rm lf}(\A)$ and is such that 
$\D_{\A}( M[0]) =  {{{{\bigcap}}}}_{\A}^aM$
for every locally-free $\A$-module $M$ of rank $a$. 
The approach of loc. cit. also defines associated functors
$$\D^\diamond_A:{\rm Mod}(A)\to\calP(\zeta(A))\,\,\,\,\,\,\text{ and }\,\,\,\,\,\,\D_A:D^{\rm lf}(A)_{\rm is}\to\calP(\zeta(A)),$$
where ${\rm Mod}(A)$ is the category of finitely generated $A$-modules. The determinant functors $\D_{\A}$ and $\D_A$ are then compatible with extension of scalars. 

We now write ${_\Pi}C$ for the complex $\Pi\otimes_{\ZZ[G]}^{\mathbb{L}}C_{L,S}$ in $D(\A)$ and set $r := {\rm rk}_\A(Y_\pi)$. For each prime ideal $\frp$ of $\Lambda$ we then fix, as we may, an $\A_{(\frp)}$-basis $\underline{b}_{\pi,\frp}$ 
of $Y_{\pi,(\frp)}$. This choice of basis induces a composite homomorphism of $\xi(\A_{(\frp)})$-modules
\begin{align*}\label{Thetap}\Theta_{\underline{b}_{\pi,\frp}}:\D_{A}(\calF\cdot {_\Pi}C)
\cong&\,\,\D^\diamond_{A}\bigl(\calF\cdot{^\Pi}\mathcal{O}_{L,S}^\times\bigr)\otimes_{\zeta(A)}\D^\diamond_{A}\bigl(\calF\cdot{_\Pi}\mathcal{S}_S(L)^{\rm tr}\bigr)^{-1}\\ \notag
\to&\,\,\D^\diamond_{Ae_\pi}\bigl(e_\pi(\calF\cdot{^\Pi}\mathcal{O}_{L,S}^\times)\bigr)\otimes_{\zeta(A)e_\pi}\D^\diamond_{Ae_\pi}\bigl(e_\pi(\calF\cdot{_\Pi}X_{L,S})\bigr)^{-1}\\ \notag
\cong&\,\,\D^\diamond_{Ae_\pi}\bigl(e_\pi(\calF\cdot{^\Pi}\mathcal{O}_{L,S}^\times)\bigr)\otimes_{\zeta(A)e_\pi}\D^\diamond_{Ae_\pi}\bigl(e_\pi(\calF\cdot Y_\pi)\bigr)^{-1}\\ \notag
\cong&\,\,\D^\diamond_{Ae_\pi}\bigl(e_\pi(\calF\cdot{^\Pi}\mathcal{O}_{L,S}^\times)\bigr)\\ \notag
=&\,\,e_\pi\bigl(\calF\cdot{{{\bigcap}}}^r_{\A}{^\Pi}\mathcal{O}_{L,S}^\times\bigr).
\end{align*}
Here the first isomorphism is induced by the `passage to cohomology' map from \cite[Prop. 5.14 (i)]{bses} for the complex of $\A$-modules ${_\Pi}C$, 
the arrow is induced by multiplication by $e_\pi$ and by the exact sequence (\ref{selmer lemma II}), the second isomorphism is induced by $\pi$, and the third isomorphism is induced by the canonical isomorphism
\begin{equation*}\label{diamond}\zeta(A)e_\pi\cong\D^\diamond_{Ae_\pi}\bigl(e_\pi(\calF\cdot Y_{\pi})\bigr),\quad e_\pi\mapsto e_\pi\wedge_{i=1}^{i=r}b_{\frp,i}.\end{equation*}


The lattice $\W_\frp:=\Theta_{\underline{b}_{\pi,\frp}}\bigl(\D_{\A_{(\frp)}}({_\Pi}C_{(\frp)})\bigr)$ is independent of the choice of basis $\underline{b}_{\pi,\frp}$ of $Y_{\pi,(\frp)}$. It is also proved in \cite[Lem. 4.7]{bms1} that the module of `Weil-Stark elements' 
$$\W_{L/K,S}^{\Pi,\pi}:={{{\bigcap}}}_{\frp}\W_{\frp}$$ 
is a full $\xi(\A)e_\pi$-submodule of $e_\pi(\calF\cdot{{{\bigcap}}}^r_{\A}{^\Pi}\mathcal{O}_{L,S}^\times)$ that, for each $\frp$, has  $(\W_{L/K,S}^{\Pi,\pi})_{(\frp)}=\W_\frp$. 

%
%
%

\begin{remark}\label{ntWSe}{\em  The above construction implies $\W^{\Pi,\pi}_{L/K,S}\not= (0)$ if and only if 
${\rm rk}_\A(Y_\pi)$ is equal to the maximal possible rank $r_{L,S}^\Pi$ of a locally-free $\mathcal{A}$-quotient of ${_{\Pi}}X_{L,S}$ (cf. Remark \ref{cases of r}).}\end{remark}

\subsubsection{}
The next result establishes a link between these Weil-Stark elements and the generalised Stark elements introduced in  \S\ref{hnase section}.

\begin{theorem}\label{eTNC} If ${\rm LTC}(L/K;\Pi)$ is valid, then for each map $\pi$ as in (\ref{surj Pi map}) and each $r^\Pi_{L,S}$-tuple $x_\bullet$ in $Y_\pi$, the generalised Stark element $\varepsilon_{x_\bullet}^{\pi}$ belongs to $\W_{L/K,S}^{\Pi,\pi}$.\end{theorem}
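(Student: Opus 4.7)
The plan is to combine the reformulation of ${\rm LTC}(L/K;\Pi)$ provided by Proposition~\ref{ltc2}(ii) with the reduced determinant functor formalism of \cite{bses} to deduce, at each prime $\frp$ of $\Lambda$, that the image under $\Theta_{\underline{b}_{\pi,\frp}}$ of the $\xi(\mathcal{A}_{(\frp)})$-lattice $\D_{\mathcal{A}_{(\frp)}}({_\Pi}C_{(\frp)})$ is precisely the $\xi(\mathcal{A}_{(\frp)})$-lattice generated by $\varepsilon^\pi_{x_\bullet}$ (at least when ${\rm rk}_\mathcal{A}(Y_\pi)=r^\Pi_{L,S}$).

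First I would remove the auxiliary set $T$ from the $\Pi$-version of LTC. Applying additivity of refined Euler characteristics to the triangle obtained from (\ref{Ttriangle}) by applying $\Pi\otimes^{\mathbb{L}}_{\ZZ[G]}(-)$, and computing the refined Euler characteristic of $\Pi\otimes^{\mathbb{L}}_{\ZZ[G]}(\bigoplus_{w\in T_L}\kappa_w^\times)[0]$ by means of the resolution (\ref{resolve kappa}) (as in the proof of Proposition~\ref{ltc2}(i)), one finds
\begin{equation*}
\chi_{\mathcal{A}}({_\Pi}C_{L,S,T},R^\Pi_{L,S}) = \chi_{\mathcal{A}}({_\Pi}C,R^\Pi_{L,S}) + \delta_{\mathcal{A},A_{\CC}}(\iota_\Pi(\gamma_T)),
\end{equation*}
where ${_\Pi}C := \Pi \otimes^{\mathbb{L}}_{\ZZ[G]} C_{L,S}$. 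Combined with the standard relation $\theta^\ast_{L/K,S,T}(0) = \gamma_T\cdot\theta^\ast_{L/K,S}(0)$, this reduces ${\rm LTC}(L/K;\Pi)$ to the equivalent equality
\begin{equation*}
\chi_{\mathcal{A}}({_\Pi}C,R^\Pi_{L,S}) = \delta_{\mathcal{A},A_{\CC}}(\iota_\Pi(\theta^\ast_{L/K,S}(0)))
\end{equation*}
in $K_0(\mathcal{A},A_{\CC})$.

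Second, I would unravel this equality locally via the reduced determinant functor $\D_\mathcal{A}$. At each prime $\frp$ of $\Lambda$, the displayed equality is equivalent to the assertion that any generator $\delta_\frp$ of the invertible $\xi(\mathcal{A}_{(\frp)})$-module $\D_{\mathcal{A}_{(\frp)}}({_\Pi}C_{(\frp)})\subset\D_A(\calF\cdot{_\Pi}C)$ is identified, under the isomorphism $\D_A(\calF\cdot{_\Pi}C)\cong\zeta(A_{\CC})$ induced by the Dirichlet regulator $R^\Pi_{L,S}$ and the exact sequence (\ref{selmer lemma II}), with the scalar $\iota_\Pi(\theta^\ast_{L/K,S}(0))$ modulo $\xi(\mathcal{A}_{(\frp)})^\times$. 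Now, by construction, the composite $\lambda^\pi_{L,S}\circ\Theta_{\underline{b}_{\pi,\frp}}$ implements precisely this trivialisation: the algebraic steps in the construction of $\Theta_{\underline{b}_{\pi,\frp}}$ (passage to cohomology, multiplication by $e_\pi$, the surjection $\pi$, and the basis $\underline{b}_{\pi,\frp}$) replicate the algebraic factors of the Euler characteristic trivialisation, and postcomposition with $\lambda^\pi_{L,S}$ supplies the remaining Dirichlet regulator. Writing $\eta_\frp := \Theta_{\underline{b}_{\pi,\frp}}(\delta_\frp)$ and $\underline{b}_{\pi,\frp} = (b_i)_{1\le i\le r}$, this gives
\begin{equation*}
\lambda^\pi_{L,S}(\eta_\frp) \in \xi(\mathcal{A}_{(\frp)})^\times \cdot e_\pi \cdot \iota_\Pi(\theta^\ast_{L/K,S}(0)) \cdot (\wedge_{i=1}^{r}b_i).
\end{equation*}

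Third, since $e_\pi\cdot\iota_\Pi(\theta^\ast_{L/K,S}(0))=\iota_\Pi(\theta^\pi_S(0))$ (as follows from the definition of $e_\pi$ and the calculation within the proof of Lemma~\ref{stick vanishing}), applying $(\lambda^\pi_{L,S})^{-1}$ gives $\eta_\frp\in\xi(\mathcal{A}_{(\frp)})^\times\cdot\varepsilon^\pi_{\underline{b}_{\pi,\frp}}$, and then the $\mathcal{A}_{(\frp)}$-multilinearity of the reduced exterior product on any given $r^\Pi_{L,S}$-tuple $x_\bullet$ in $Y_\pi$ yields $\varepsilon^\pi_{x_\bullet}\in\W_\frp$ for every $\frp$. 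Intersecting over $\frp$ then gives $\varepsilon^\pi_{x_\bullet}\in\W^{\Pi,\pi}_{L/K,S}$. The remaining case ${\rm rk}_\mathcal{A}(Y_\pi)\neq r^\Pi_{L,S}$ is trivial, since then $\theta^\pi_S(0)=0$ by Lemma~\ref{stick vanishing} (so $\varepsilon^\pi_{x_\bullet}=0$) and $\W^{\Pi,\pi}_{L/K,S}=0$ by Remark~\ref{ntWSe}. The principal difficulty lies in the second step: one must carefully verify that $\Theta_{\underline{b}_{\pi,\frp}}$ really implements the algebraic half of the Euler characteristic trivialisation, tracing conventions through the construction of $\D_\mathcal{A}$, controlling the role of (\ref{selmer lemma II}) (whose kernel ${\rm Cl}_S^T(L)$ is finite and so disappears upon rationalisation), and handling the change-of-basis ambiguity in $\underline{b}_{\pi,\frp}$.
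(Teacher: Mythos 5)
Your proposal is correct and follows essentially the same route as the paper: reformulate ${\rm LTC}(L/K;\Pi)$ via Proposition \ref{ltc2}(ii), translate the resulting $K_0(\A,A_\CC)$ equality into a statement about the $\xi(\A_{(\frp)})$-lattice $\D_{\A_{(\frp)}}({_\Pi}C_{(\frp)})$ and its image under $\Theta_{\underline{b}_{\pi,\frp}}$, reconcile the $T$-modification using the triangle (\ref{Ttriangle}) together with the resolutions (\ref{resolve kappa}), and pass from the chosen local basis to an arbitrary $r$-tuple via multilinearity (handling the degenerate rank case through Lemma \ref{stick vanishing} and Remark \ref{ntWSe}). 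The paper packages the compatibility you flag as the ``principal difficulty'' into the commutative square (\ref{detsquare}) inside an unconditional Lemma \ref{StarkWeil} (stating $\calL^{\Pi}_{L,S,T}\cdot\varepsilon^\pi_{x_\bullet}\in\W^{\Pi,\pi}_{L/K,S}$), which is a cleaner modularisation but not a different argument.
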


\begin{proof} The regulator $R^\Pi_{L,S}$ induces a canonical isomorphism of (graded) $\zeta(A_\CC)$-modules $\lambda^\Pi_{L,S}: \D_{A_\CC}(\CC\cdot ({_\Pi}C_{L,S,T})) \to \zeta(A_\CC)$ and we set 
$$\calL^{\Pi}_{L,S,T}:=\iota_\Pi(\theta_{L/K,S,T}^*(0))^{-1}\cdot \lambda^\Pi_{L,S}(\D_\A({_\Pi}C_{L,S,T}))\subset \zeta(A_\CC).$$ 

We next note that Proposition \ref{ltc2}(ii) combines with the general result of \cite[Th. 4.8(i)]{bses2} to imply that if ${\rm LTC}(L/K;\Pi)$ is valid, then the pre-image under $\lambda^\Pi_{L,S}$ of $\iota_\Pi(\theta^\ast_{L/K,S,T}(0))$ is a $\xi(\A)$-basis of the determinant module $\D_\A({_\Pi}C_{L,S,T})$ and hence that 
\begin{equation}\label{ltc3} \calL^{\Pi}_{L,S,T} =\xi(\A).\end{equation}
Given this, the result of Theorem \ref{eTNC} is completed by the result of Lemma \ref{StarkWeil} below.\end{proof}

\begin{lemma}\label{StarkWeil} For each $\pi$ and each $r^\Pi_{L,S}$-tuple $x_\bullet$ in $Y_\pi$ one has $\calL^{\Pi}_{L,S,T}\cdot\varepsilon_{x_\bullet}^{\pi}\in \W^{\Pi,\pi}_{L/K,S}$.
\end{lemma}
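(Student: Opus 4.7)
The plan is to verify the inclusion at each localisation at a prime $\frp$ of $\Lambda$ by explicitly comparing $\lambda^\Pi_{L,S}$ and $\Theta_{\underline{b}_{\pi,\frp}}$ at the level of reduced determinants. The first step is to reduce to the case $T = \emptyset$: applying the functor ${_\Pi}(-)$ to the triangle (\ref{Ttriangle}) and using the additivity of $\D_\A$ on distinguished triangles produces a canonical isomorphism
\[
\D_\A({_\Pi}C_{L,S,(\frp)}) \;\cong\; \D_\A(C_{L,S,T,(\frp)}) \otimes_{\xi(\A_{(\frp)})} \D_\A\bigl({_\Pi}(\kappa_T^\times)[0]_{(\frp)}\bigr).
\]
The last factor is rationally trivial, and the perfect resolution (\ref{resolve kappa}) shows that its canonical trivialisation scales the natural basis class by $\gamma_T$ (up to sign). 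Since $\theta_{L/K,S,T}^\ast(0) = \gamma_T\cdot\theta_{L/K,S}^\ast(0)$, this identifies $\calL^\Pi_{L,S,T}$ with $\iota_\Pi(\theta^\ast_{L/K,S}(0))^{-1}\cdot\lambda^\Pi_{L,S}(\D_\A({_\Pi}C_{(\frp)}))$, and so reduces the lemma to the analogous claim in which both $\lambda^\Pi_{L,S}$ and $\Theta_{\underline{b}_{\pi,\frp}}$ act on the same module $\D_\A({_\Pi}C_{(\frp)})$.

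I next unwind both trivialisations in parallel. Both $\Theta_{\underline{b}_{\pi,\frp}}$ and (the reduced version of) $\lambda^\Pi_{L,S}$ begin by applying the passage-to-cohomology isomorphism of \cite[Prop. 5.14(i)]{bses}, multiplying by $e_\pi$, using the rational form of (\ref{selmer lemma II}) to replace $\mathcal{S}_S(L)^{\rm tr}$ by $X_{L,S}$, and composing with $\pi$ to descend to $Y_\pi$. They differ only at the final step: $\Theta_{\underline{b}_{\pi,\frp}}$ trivialises $\D^\diamond_A(e_\pi\calF\cdot Y_\pi)$ via the basis $\underline{b}_{\pi,\frp}$ and returns a value in $e_\pi(\calF\cdot{{{\bigcap}}}^r_\A{^\Pi}\mathcal{O}_{L,S}^\times)$, whereas $\lambda^\Pi_{L,S}$ applies the $r$-th wedge of the Dirichlet regulator $R_{L,S}$ to also trivialise $\D^\diamond_A(e_\pi\CC\cdot{^\Pi}\mathcal{O}_{L,S}^\times)$, returning a scalar in $\zeta(A_\CC)$. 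Comparing these two exit routes yields the key identity
\[
\lambda^\pi_{L,S}\bigl(\Theta_{\underline{b}_{\pi,\frp}}(z)\bigr) = \lambda^\Pi_{L,S}(z)\cdot \wedge_{i=1}^r b_i
\]
for any $\xi(\A_{(\frp)})$-basis $z$ of $\D_\A({_\Pi}C_{(\frp)})$, where $b_\bullet := \underline{b}_{\pi,\frp}$ and $\lambda^\pi_{L,S}$ is the wedge-power regulator appearing in Definition \ref{hnase}.

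Finally, I identify the residual discrepancy with the Stark element. The proof of Lemma \ref{stick vanishing} shows that $\iota_\Pi(\theta^\pi_S(0)) = e_\pi\cdot \iota_\Pi(\theta^\ast_{L/K,S}(0))$, so Definition \ref{hnase} combined with the previous display rearranges to
\[
\iota_\Pi(\theta^\ast_{L/K,S}(0))^{-1}\cdot \lambda^\Pi_{L,S}(z)\cdot \varepsilon^\pi_{b_\bullet} \;=\; \Theta_{\underline{b}_{\pi,\frp}}(z) \;\in\; \W^{\Pi,\pi}_{L/K,S,(\frp)},
\]
which settles the case $x_\bullet = b_\bullet$. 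For a general $r$-tuple $x_\bullet$ in $Y_\pi$, the element $e_\pi\cdot\wedge_{i=1}^r x_i$ lies in the reduced Rubin lattice $e_\pi{{{\bigcap}}}^r_\A Y_\pi$, which is $\xi(\A_{(\frp)})$-free of rank one with basis $e_\pi\wedge b_i$; writing $e_\pi\wedge x_i = \mu\cdot e_\pi\wedge b_i$ with $\mu\in\xi(\A_{(\frp)})$, one obtains $\varepsilon^\pi_{x_\bullet} = \mu\cdot \varepsilon^\pi_{b_\bullet}$ and the full inclusion follows from the $\xi(\A_{(\frp)})$-stability of $\W^{\Pi,\pi}_{L/K,S,(\frp)}$. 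The main obstacle in this plan is the bookkeeping in the reduction of Step 1, where the canonical trivialisation of the length-one acyclic complex attached to (\ref{resolve kappa}) must be shown to scale the evident basis class by exactly $\gamma_T$, with the correct sign in the non-commutative setting.
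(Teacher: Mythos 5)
Your proposal is correct and reproduces the structure of the paper's own proof: the reduction to a fixed $\A_{(\frp)}$-basis $\underline{b}_{\pi,\frp}$ via \cite[Lem.~4.13]{bses}, the commutativity of what the paper calls the square (\ref{detsquare}) (your ``key identity''), the cancellation of $\iota_\Pi(\gamma_T)$ coming from (\ref{Ttriangle}) and (\ref{resolve kappa}), and the use of Lemma \ref{stick vanishing} to relate $\theta^\pi_S(0)$ and $\theta^\ast_{L/K,S}(0)$ on $e_\pi$. The only organisational difference is that you isolate the passage from $C_{L,S,T}$ to $C_{L,S}$ as a preliminary step, whereas the paper folds that cancellation into a single chain of equalities; a minor typographical slip appears in your Step 1 where the first tensor factor should be $\D_\A({_\Pi}C_{L,S,T,(\frp)})$ rather than $\D_\A(C_{L,S,T,(\frp)})$, but this does not affect the argument.
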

\begin{proof} By Lemma \ref{stick vanishing}, we may and will assume that ${\rm rk}_\A(Y_\pi)=r:=r^\Pi_{L,S}$.
It is enough to fix an arbitrary prime $\frp$ of $\Lambda$ and prove  $\calL^{\Pi}_{L,S,T}\cdot\varepsilon_{x_\bullet}^{\pi}\in \W_\frp$.

We fix, as we may, an $\A_{(\frp)}$-basis $\underline{b}_{\pi,\frp}=\{b_{\frp,i}\}_{1\leq i\leq r}$ of $Y_{\pi,(\frp)}$. By \cite[Lem. 4.13]{bses}, there is $\mu$ in $\xi(\A_{(\frp)})$ for which $\wedge_{i=1}^{i=r}x_i=\mu\cdot\wedge_{i=1}^{i=r}b_{\frp,i}$. But then
$$\varepsilon_{x_\bullet}^{\pi}=\iota_\Pi(\theta^\pi_S(0))\cdot(\lambda^{\pi}_{L,S})^{-1}(\mu\cdot\wedge_{i=1}^{i=r}b_{\frp,i})=\mu\cdot\iota_\Pi(\theta^\pi_S(0))\cdot(\lambda^{\pi}_{L,S})^{-1}(\wedge_{i=1}^{i=r}b_{\frp,i}).$$
We set $\varepsilon_{\underline{b}_{\pi,\frp}}:=\iota_\Pi(\theta^\pi_S(0))\cdot(\lambda^{\pi}_{L,S})^{-1}(\wedge_{i=1}^{i=r}b_{\frp,i})$ and deduce that it is enough to prove that 
\begin{equation}\label{equalityforbasis}(\calL^{\Pi}_{L,S,T})_{(\frp)}\cdot\varepsilon_{\underline{b}_{\pi,\frp}}=\W_\frp.\end{equation}

Writing $\sigma_{\underline{b}_{\pi,\frp}}:e_\pi\bigl({\bigwedge}_{A_\CC}^{r}\CC\cdot Y_\pi\bigr)\stackrel{\sim}{\longrightarrow}e_\pi\zeta(A_\CC)$ for the isomorphism that maps $e_\pi\wedge_{i=1}^{i=r}b_{\frp,i}$ to $e_\pi$, there is commutative square
\begin{equation}\label{detsquare}
\xymatrix{ {\rm d}_{A_\CC}(\CC\cdot {_\Pi}C_{L,S,T})  \ar@{->}[rr]^{e_\pi\lambda_{L,S}^\Pi} \ar@{->}[d]_{(\Theta_{\underline{b}_{\pi,\frp}})_\CC}  & & e_\pi\zeta(A_\CC)  \\
e_\pi({{\bigwedge}}_{A_\CC}^r \CC\cdot {^\Pi}\mathcal{O}_{L,S}^\times)   \ar@{->}[rr]^{\lambda^{\pi}_{L,S}} &  &  e_\pi\bigl(\bigwedge_{A_\CC}^{r}\CC\cdot Y_\pi\bigr)  \ar@{->}[u]_{\sigma_{\underline{b}_{\pi,\frp}}} .
} 
\end{equation}

The claimed equality (\ref{equalityforbasis}) is now valid as a consequence of the injectivity of both $\sigma_{\underline{b}_{\pi,\frp}}$ and $\lambda^{\pi}_{L,S}$ and of the equality of lattices 

\begin{align*}\sigma_{\underline{b}_{\pi,\frp}}\bigl(\lambda^{\pi}_{L,S}\bigl((\calL^{\Pi}_{L,S,T})_{(\frp)}\cdot\varepsilon_{\underline{b}_{\pi,\frp}}\bigr)\bigr)
=\,&(\calL^{\Pi}_{L,S,T})_{(\frp)}\cdot\sigma_{\underline{b}_{\pi,\frp}}(\lambda^{\pi}_{L,S}(\varepsilon_{\underline{b}_{\pi,\frp}}))\\
=\,&(\calL^{\Pi}_{L,S,T})_{(\frp)}\cdot(e_\pi\cdot\iota_\Pi(\theta^{\pi}_S(0)))\\
=\,&\lambda^\Pi_{L,S}(\D_{\A_{(\frp)}}(({_\Pi}C_{L,S,T})_{(\frp)}))\cdot e_\pi\cdot\iota_\Pi(\theta^*_{L/K,S,T}(0)^{-1}\cdot\theta^{\pi}_S(0))\\
=\,&e_\pi\lambda^\Pi_{L,S}(\D_{\A_{(\frp)}}(({_\Pi}C_{L,S,T})_{(\frp)}))\cdot\iota_\Pi(\gamma_{T})^{-1}\\
=\,&e_\pi\lambda^\Pi_{L,S}(\D_{\A_{(\frp)}}(({_\Pi}C_{L,S})_{(\frp)}))\\
=\,&\sigma_{\underline{b}_{\pi,\frp}}\left(\lambda^{\pi}_{L,S}\left(\W_\frp\right)\right).
\end{align*}
Here the fourth equality uses Lemma \ref{stick vanishing} and the fact $\theta^*_{L/K,S,T}(0) = \gamma_T\cdot \theta^{*}_{L/K,S}(0)$ while 
the sixth equality from the commutativity of (\ref{detsquare}). To justify the fifth equality and thus complete the proof, one must simply apply the functor ${\rm d}_\A(-)$ to the exact triangle
\begin{equation*}\label{Ttriangle2} {_\Pi}C_{L,S,T} \longrightarrow {_\Pi}\,C_{L,S} \longrightarrow {_\Pi}\bigl({\bigoplus}_{w\in T_L}\kappa_w^\times\bigr)[0]\longrightarrow,\end{equation*}
in $D^{\rm lf}(\A)$ that is induced by (\ref{Ttriangle}) and 
note that
\begin{equation*}\label{dT} \D_{\A}({_\Pi}C_{L,S,T}) = \D_\A\bigl({_\Pi}\bigl({\bigoplus}_{w\in T_L}\kappa_w^\times\bigr)[-1]\bigr)\cdot \D_\A( {_\Pi}C_{L,S}) = \iota_\Pi(\gamma_{T})\cdot \D_\A({_\Pi}C_{L,S}),\end{equation*}
where the second equality follows from the resolutions (\ref{resolve kappa}) and the commutativity of (\ref{key commute}). 
\end{proof}

\begin{remark}\label{dals improve4}{\em If $\Pi = \Pi_\psi$ for some character $\psi$ in $\widehat{G}$, then ${\rm LTC}(L/K,\Pi_\psi)$ is valid if and only if $\psi$ validates the Strong-Stark Conjecture (cf. Remark \ref{Picomponent}) and the corresponding generalised Stark elements encompass the elements that occur in \cite[Conj. 2.6.1]{dals} and were studied earlier in \cite{chin, stark0, stark} (cf. Remark \ref{stark context}). In this setting, therefore, Theorem \ref{eTNC} incorporates a refinement of \cite[Conj. 2.6.1]{dals} and hence of the original conjectures of Stark and Chinburg. This case will be considered in detail in \S\ref{cs}.
}
\end{remark}

\subsubsection{}
To end this section, we derive a useful consequence of Theorem \ref{eTNC}. To do this we fix $a \in \mathbb{N}_0$ with  $a< |S|$ and $I\in \wp_a^\ast(S)$ and use the notation of Example \ref{exam1}(ii), Lemma \ref{exam 2}, Remark \ref{rubin-stark context} and Lemma \ref{ell inv interpretation}. We then set 
\[\W^{I}_{L/K,S}:=\W^{\Pi,\pi_I}_{L/K,S} \subset \QQ\cdot{{{\bigcap}}}^a_{\Lambda[G]e_I}{^{\Pi}}\mathcal{O}_{L,S}^\times\subseteq e_I\bigl(\bigwedge_{\QQ[G]}^{a}\QQ\cdot\co^\times_{L,S}\bigr).\]

\begin{corollary}\label{Weil-Linvariants} Fix $a\in \mathbb{N}_0$ with $a< |S|$ and $I\in \wp_a^\ast(S)$. Then, if ${\rm LTC}(L/K;\Lambda[G]e_I)$ is valid, one has $\W^{I}_{L/K,S}=\xi(\Lambda[G]e_I)\cdot\varepsilon^{I}_{L/K,S}$. In particular, in this case one has
\begin{multline}\label{first equiv}  \{({\wedge}_{i=1}^{i=a}\varphi_i)(\omega):\,\omega\in\W^{I}_{L/K,S},\,  \varphi_i\in\Hom_G(\mathcal{O}_{L,S,T}^{\times},\ZZ[G])\} =
\\ \xi(\Lambda[G])\cdot\{e_I(\theta^{(a)}_{L/K,S}(0))\cdot R(\psi): \psi \in \Hom^I_G(\mathcal{O}_{L,S,T}^{\times},X_{L,S})\}.\end{multline} 
\end{corollary}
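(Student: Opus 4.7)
The plan is to combine Theorem \ref{eTNC} with an identification already established in the proof of Lemma \ref{StarkWeil}, and then to translate the resulting description of $\W^I_{L/K,S}$ through Lemma \ref{ell inv interpretation}. Throughout, set $\A := \Lambda[G]e_I$ and take $\Pi := \Lambda[G]e_I$ as in Example \ref{exam1}(ii). The key observation is that by Lemma \ref{exam 2}(i) the tuple $x_I = \{w_v\}_{v \in I}$ is already a \emph{global} $\A$-basis of ${_{\Pi}}Y_{L,I}$, so one may take $\underline{b}_{\pi_I,\frp} = x_I$ uniformly for every prime $\frp$ of $\Lambda$. With this uniform choice, the element $\varepsilon_{\underline{b}_{\pi_I,\frp}}$ occurring in the proof of Lemma \ref{StarkWeil} coincides with $\varepsilon^I_{L/K,S}$ itself, independently of $\frp$.

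First I would invoke the equality (\ref{equalityforbasis}) from the proof of Lemma \ref{StarkWeil}, which in our setting reads $\W_\frp = (\calL^{\Pi}_{L,S,T})_{(\frp)} \cdot \varepsilon^I_{L/K,S}$. The assumed validity of ${\rm LTC}(L/K;\Pi)$, together with (\ref{ltc3}), forces $\calL^{\Pi}_{L,S,T} = \xi(\A)$, so that $\W_\frp = \xi(\A_{(\frp)}) \cdot \varepsilon^I_{L/K,S}$ for every $\frp$. Intersecting over $\frp$ and using $\xi(\A) = \bigcap_{\frp}\xi(\A_{(\frp)})$ should then yield the first assertion $\W^I_{L/K,S} = \xi(\Lambda[G]e_I) \cdot \varepsilon^I_{L/K,S}$.

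For the equality (\ref{first equiv}) one applies $\wedge_{i=1}^{i=a} \varphi_i$ to the elements of $\W^I_{L/K,S}$: by the first assertion such elements have the form $\mu \cdot \varepsilon^I_{L/K,S}$ with $\mu \in \xi(\Lambda[G]e_I)$, so the left-hand side of (\ref{first equiv}) equals $\xi(\Lambda[G]e_I) \cdot \{(\wedge_{v \in I}\varphi_v)(\varepsilon^I_{L/K,S}) : \varphi_v \in \Hom_G(\co_{L,S,T}^\times,\ZZ[G])\}$. Lemma \ref{ell inv interpretation} identifies this bracketed set with $\xi(\ZZ[G]) \cdot \{e_I(\theta^{(a)}_{L/K,S}(0)) R(\psi) : \psi \in \Hom^I_G(\co_{L,S,T}^\times,X_{L,S})\}$, and the match with the $\xi(\Lambda[G])$ appearing on the right-hand side of (\ref{first equiv}) is absorbed by the idempotent $e_I$: every element in sight lies in $e_I\zeta(\QQ[G])$, and the canonical surjection of orders $\Lambda[G]\twoheadrightarrow\Lambda[G]e_I$ yields $e_I\cdot\xi(\Lambda[G]) = \xi(\Lambda[G]e_I)$ by \cite[Lem. 3.2]{bses}.

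The main obstacle will be the intersection step, namely justifying the equality $\bigcap_{\frp}(\xi(\A_{(\frp)}) \cdot \varepsilon^I_{L/K,S}) = (\bigcap_{\frp}\xi(\A_{(\frp)})) \cdot \varepsilon^I_{L/K,S}$. To handle this I would transport both sides across the composite isomorphism $\sigma_{x_I} \circ \lambda^{\pi_I}_{L,S}$ of (\ref{detsquare}), which carries $\varepsilon^I_{L/K,S}$ to $e_{\pi_I}\cdot\iota_\Pi(\theta^{\pi_I}_S(0))$. The formula (\ref{fe equality}) used in the proof of Lemma \ref{stick vanishing} shows that for every $\chi$ with $e_\chi e_{\pi_I} \neq 0$ the leading value $L^{r_{\pi_I}(\chi)}_S(\check\chi,0)$ is non-zero, so $e_{\pi_I}\cdot\iota_\Pi(\theta^{\pi_I}_S(0))$ is a non-zero-divisor in $e_{\pi_I}\zeta(A_\CC)$; this makes multiplication by it injective and the intersection commutes with scalar multiplication. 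One then concludes via the standard local-global identity $e_{\pi_I}\xi(\A) = \bigcap_{\frp}e_{\pi_I}\xi(\A_{(\frp)})$, which holds because $e_{\pi_I}\xi(\A)$ is a finitely generated torsion-free $\Lambda$-submodule of the $\CC$-vector space $e_{\pi_I}\zeta(A_\CC)$.
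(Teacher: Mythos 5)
Your proposal is correct and follows essentially the same route as the paper's proof: you take $\Pi = \Lambda[G]e_I$, use Lemma \ref{exam 2}(i) to observe that $x_I$ is a global $\A$-basis of ${_\Pi}Y_{L,I}$ so that the element $\varepsilon_{\underline{b}_{\pi_I,\frp}}$ of Lemma \ref{StarkWeil} coincides with $\varepsilon^I_{L/K,S}$ at every prime, apply the local identities (\ref{equalityforbasis}), invoke (\ref{ltc3}) under the hypothesis ${\rm LTC}(L/K;\Lambda[G]e_I)$, and conclude via Lemma \ref{ell inv interpretation}. The only organisational difference is that the paper first packages the local equalities into the $p$-independent identity $\calL^{\Pi}_{L,S,T}\cdot\varepsilon^I_{L/K,S}=\W^I_{L/K,S}$ and then specialises $\calL^\Pi$ to $\xi(\A)$, whereas you specialise first and intersect afterwards; the two orderings are logically equivalent. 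Your treatment of the intersection step via the injectivity of multiplication by $e_{\pi_I}\iota_\Pi(\theta^{\pi_I}_S(0))$ (justified by the order-of-vanishing formula (\ref{fe equality})) is a correct and more explicit justification of a point the paper passes over silently, and the bookkeeping $e_I\xi(\Lambda[G])=\xi(\Lambda[G]e_I)$ is likewise correct.
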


\begin{proof} At the outset we claim that if for any $a\in \mathbb{N}_0$ with $a<|S|$ and $I\in \wp_a^*(S)$, one writes $\Pi$ for the $(\Lambda[G]e_I,\Lambda[G])$-bimodule $\Lambda[G]e_I$, then 

\[ \calL^{\Pi}_{L,S,T}\cdot\varepsilon^{I}_{L/K,S}=\W^{I}_{L/K,S}.\]
To see this, we observe $\underline{x}_I=\{w_v:\,v\in I\}$ is a $\Lambda[G]e_I$-basis of ${_\Pi}Y_{L,I}$ and so induces a  $\Lambda_{(\frp)}[G]e_I$-basis of $({_\Pi}Y_{L,I})_{(\frp)}$ for every prime ideal $\frp$ of $\Lambda$. The family of equalities (\ref{equalityforbasis}) for each such $\frp$ then combine to imply the above equality.

To prove the first claim, it is now enough to note that the validity of ${\rm LTC}(L/K;\Lambda[G]e_I)$ implies the equality (\ref{ltc3}), which translates to the current setting to combine with the above displayed equality to imply  $\W^{I}_{L/K,S}=\calL^{\Pi}_{L,S,T}\cdot\varepsilon^{I}_{L/K,S}=\xi(\Lambda[G]e_I)\cdot\varepsilon^{I}_{L/K,S}.$ Then (\ref{first equiv}) follows immediately upon combining the last equality with Lemma \ref{ell inv interpretation}. \end{proof}

\section{Arithmetic properties of Artin $\mathscr{L}$-invariants}\label{integrality section}

In this section we combine Theorem \ref{eTNC} with results from \cite{bms1} to derive a range of concrete predictions concerning the arithmetic properties of Artin $\mathscr{L}$-invariants. 

\subsection{Statement of the main result}\label{6.1}

Throughout this section, we let $\Lambda$ denote either $\ZZ$ or a localisation thereof at some prime number and we abbreviate $\Lambda\otimes_{\ZZ}-$ to $\Lambda\cdot -$.

We fix sets $S$ and $T$ as in (\ref{data fix}) and $a\in\NN_0$ with $a< |S|$ and set $\theta_{L/K,S,T}^{(a)}(0):=\gamma_{T}\cdot\theta_{L/K,S}^{(a)}(0)$. For $I\in \wp_a^\ast(S)$ we use the notation of Lemma \ref{ell inv interpretation} and set 
$$\Theta^{I}_{L/K,S,T}:=\xi(\Lambda[G])\cdot\{e_I(\theta^{(a)}_{L/K,S,T}(0))\cdot R(\psi): \psi \in \Hom^I_G(\mathcal{O}_{L,S,T}^{\times},X_{L,S})\}.$$


Finally, for $v\in S$ we define an ideal of $\xi(\Lambda[G])$ by setting 
\[ \mathfrak{m}^a_v(L/K) := \{ x\in \xi(\Lambda[G]): x\cdot (1-e_v+e_{\bf 1})e_{(a),S}\cdot\Lambda\cdot{\rm Fit}^{{\rm tr},a}_{\ZZ[G]}(\mathcal{S}_S^T(L)) \subseteq \Lambda\cdot{\rm Fit}^{{\rm tr},a}_{\ZZ[G]}(\mathcal{S}_S^T(L))\}.\] 
%

\begin{theorem}\label{main conj} Fix $a\in\NN_0$ with $a< |S|$ and assume 
 ${\rm LTC}(L/K;\Lambda[G]e_{(a),S})$ is valid.

\begin{itemize}
\item[(i)] Then, for each $v\in S$, there exists a locally-quadratic presentation $\tilde h=\tilde h_v$ of $\mathcal{S}_S^T(L)^{\rm tr}$ that is independent of $a$ and such that 
\begin{equation}\label{prediction1} (1-e_v+e_{\bf 1})e_{(a),S}\left(\Lambda\cdot{\rm Fit}_{\ZZ[G]}^a(\tilde h)\right) = c^a_{S,v}\Theta_{K}^a\oplus {\bigoplus}_{I\in \wp_a(S,v)} \Theta_{L/K,S,T}^I,\end{equation}
where, on the right hand side, we use the integer $c^a_{S,v}$ of Theorem \ref{main result alg} and the $\Lambda$-submodule $\Theta_{K}^a$ of $\RR e_{\bf 1}$ specified in Remark \ref{ThetaKK} below.

\item[(ii)] For any $v\in S$ one has $$\mathfrak{m}_{v}^a(L/K)\cdot\iota_\#\bigl({\sum}_{I\in \wp_a(S,v)} \Theta_{L/K,S,T}^I\bigr)\subseteq \Lambda\cdot{\rm Fit}^{{\rm tr},a}_{\ZZ[G]}(\mathcal{S}_S^T(L)).$$ 

\item[(iii)] If $a<|S|-1$ then for any $v$ in $S$, $x$ in $\mathfrak{m}_{v}^a(L/K)$ and $\varphi$ in the intersection $\bigcap_{I\in \wp_a(S,v)}\Hom_G^{I}(\co_{L,S,T}^\times,X_{L,S})$, one has
\begin{equation}\label{Linvproduct}(1-e_v)e_{(a),S}\cdot x\cdot \iota_\#(\mathscr{L}^a(\varphi))\,\in \,\iota_\#(\gamma_T^{-1})\cdot\left(\Lambda\cdot{\rm Fit}^{{\rm tr},a}_{\ZZ[G]}(\mathcal{S}_S^T(L))\right).\end{equation}
In particular, this containment is valid for any $\varphi$ in $|G|\cdot\Hom_G(\co_{L,S,T}^\times,X_{L,S})$.

\item[(iv)] Fix $I\in \wp^\ast_a (S)$ and $v_0\in S\setminus I$ and set $S' :=S_K^\infty\cup I\cup\{v_0\}$. Then $$|N_I|\cdot\delta(\Lambda[G])\cdot\Theta^{I}_{L/K,S,T}\subseteq \Lambda\cdot{\rm Ann}_{\ZZ[G]}({\rm Cl}^T_{S'}(L)),$$
where $N_I$ is the normal closure in $G$ of $\bigcup_{v\in I}G_v$ and $\delta(\Lambda[G])$ as in \S\ref{Whitehead section}.
\end{itemize}
\end{theorem}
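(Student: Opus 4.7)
My approach combines the algebraic decomposition of Theorem \ref{main result alg} with the arithmetic input of Corollary \ref{Weil-Linvariants}, which under the hypothesis ${\rm LTC}(L/K;\Lambda[G]e_{(a),S})$ identifies Fitting components with wedge evaluations of Stark elements. Since $a<|S|$, one checks directly from the definitions that $e_I\le e_{(a),S}$ for each $I\in\wp^\ast_a(S)$, so the assumed LTC hypothesis implies ${\rm LTC}(L/K;\Lambda[G]e_I)$ as needed.

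For part (i), I take $\tilde h=\tilde h_v$ to be the locally-quadratic presentation of $\mathcal{S}^T_S(L)^{\rm tr}$ from \S\ref{special cases} and apply Theorem \ref{main result alg} to its transpose $\tilde h^{\rm tr}$, which by Remark \ref{asinLY}(ii) is a locally-quadratic presentation of $\mathcal{S}^T_S(L)$. The resulting decomposition of ${\rm Fit}^{{\rm tr},a}_{\ZZ[G]}(\tilde h^{\rm tr})$ transfers to one of ${\rm Fit}^a_{\ZZ[G]}(\tilde h)$ via the transpose formula (\ref{transpose equation}) together with the $\iota_\#$-invariance of the idempotents $e_v$, $e_{(a),S}$, $e_I$ (the set $\widehat{G}_{(a),S}$ being stable under contragredient). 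Each summand $e_I\cdot\Lambda\cdot{\rm Fit}^a_{\ZZ[G]}(\tilde h)$ is then identified with $\Theta^I_{L/K,S,T}$ by noting that $e_I\otimes_{\ZZ[G]}\tilde h$ is a presentation of $_{\Pi_I}\mathcal{S}^T_S(L)^{\rm tr}$ as a $\Lambda[G]e_I$-module and invoking Corollary \ref{Weil-Linvariants} together with Lemma \ref{ell inv interpretation}; the factor $\gamma_T$ appears because of the $T$-modification relating $\theta^{(a)}_{L/K,S}(0)$ to $\theta^{(a)}_{L/K,S,T}(0)$. One defines $\Theta^a_K:=e_{\bf 1}\cdot\Lambda\cdot{\rm Fit}^a_{\ZZ[G]}(\tilde h)$.

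Parts (ii) and (iii) then follow formally. For (ii), apply $\iota_\#$ to the containment $\bigoplus_I\Theta^I_{L/K,S,T}\subseteq(1-e_v+e_{\bf 1})e_{(a),S}\cdot\Lambda\cdot{\rm Fit}^a_{\ZZ[G]}(\tilde h)$ coming from (i), use (\ref{transpose equation}) to replace ${\rm Fit}^a_{\ZZ[G]}(\tilde h)$ by ${\rm Fit}^{{\rm tr},a}_{\ZZ[G]}(\tilde h^{\rm tr})\subseteq{\rm Fit}^{{\rm tr},a}_{\ZZ[G]}(\mathcal{S}^T_S(L))$, and multiply by $\mathfrak{m}^a_v(L/K)$. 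For (iii), Lemma \ref{exam 2}(iii) yields
$$(1-e_v)e_{(a),S}\cdot\mathscr{L}^a(\varphi)=\gamma_T^{-1}{\sum}_{I\in\wp_a(S,v)}e_I(\theta^{(a)}_{L/K,S,T}(0))\cdot R(\varphi),$$
and each summand lies in $\Theta^I_{L/K,S,T}$ since $\varphi\in\bigcap_I\Hom^I_G(\mathcal{O}_{L,S,T}^\times,X_{L,S})$; applying $\iota_\#$ (using $\iota_\#$-invariance of the idempotent) and invoking (ii) gives the claim, while the final assertion follows from Remark \ref{easy split case}.

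Part (iv) is the principal obstacle, as it requires genuinely arithmetic input beyond what suffices for (i)--(iii). The plan is to rewrite $\Theta^I_{L/K,S,T}$ via Corollary \ref{Weil-Linvariants} as $\gamma_T$ times wedge evaluations of the Weil-Stark element $\varepsilon^I_{L/K,S}$ and then invoke results from \cite{bms1} relating such evaluations to class-group annihilators. Concretely, by descending to the subextension $L^{N_I}/K$, in which all places of $I$ split completely, the image of $\varepsilon^I_{L/K,S}$ becomes a Rubin-Stark-type element whose wedge evaluations, multiplied by $\delta(\Lambda[G])$, annihilate ${\rm Cl}^T_{S'}(L^{N_I})$ via the basic annihilation property of reduced Fitting invariants recalled at the end of \S\ref{app higher fits2} combined with the exact sequence (\ref{selmer lemma I}) for $L^{N_I}$. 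The factor $|N_I|$ then arises from lifting this annihilation back to ${\rm Cl}^T_{S'}(L)$ along the natural surjection ${\rm Cl}^T_{S'}(L)\to{\rm Cl}^T_{S'}(L^{N_I})$, whose kernel is annihilated by $|N_I|$ via cohomological arguments for $N_I$ acting on the ideles of $L$.
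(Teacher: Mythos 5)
Your handling of parts (ii) and (iii) matches the paper's approach and is fine: given (i), those parts follow formally from (\ref{transpose equation}), Remark \ref{asinLY}(ii) and Lemma \ref{exam 2}(iii). However, there are two genuine gaps in what you present for (i) and (iv).

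For part (i), the crucial step is identifying $e_I\cdot\Lambda\cdot{\rm Fit}^a_{\ZZ[G]}(\tilde h)$ with $\Theta^I_{L/K,S,T}$, and you claim this follows by ``noting that $e_I\otimes_{\ZZ[G]}\tilde h$ is a presentation of $_{\Pi_I}\mathcal{S}^T_S(L)^{\rm tr}$'' and invoking Corollary~\ref{Weil-Linvariants} with Lemma~\ref{ell inv interpretation}. But those two results only tell you that (under LTC) the $\xi(\Lambda[G]e_I)$-module $\W^I_{L/K,S}$ of Weil-Stark elements is generated by $\varepsilon^I_{L/K,S}$ and that its wedge evaluations equal the right-hand side $\Theta^I_{L/K,S,T}$ up to the $\gamma_T$-twist; they say nothing about how $\W^I_{L/K,S}$ relates to the Fitting invariant of $\tilde h$. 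That link is supplied by \cite[Th. 5.2(ii)]{bms1}, which is stated for $\Pi=\ZZ[G/N_I]$ over the quotient extension $L^{N_I}/K$ where the places in $I$ split completely (not for $\Pi_I=\Lambda[G]e_I$ over $L/K$, which is not a locally-Gorenstein order in the required sense). To use it one must first descend: this requires Proposition~\ref{last} (to show $e_{N_I}(\Theta^I_{L/K,S,T})=\Theta^a_{L^{N_I}/K,S,T}$), Remark~\ref{asinLY}(i) (to descend $\tilde h$ to $\tilde h_{N_I}$ compatibly), and Proposition~\ref{ltc2}(iii) (to obtain ${\rm LTC}(L^{N_I}/K)$ from ${\rm LTC}(L/K;\Lambda[G]e_{(a),S})$). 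None of this appears in your argument, so the identification does not close. You also define $\Theta^a_K$ tautologically as $e_{\bf 1}\cdot\Lambda\cdot{\rm Fit}^a_{\ZZ[G]}(\tilde h)$ rather than proving it equals the explicit quantity in Remark~\ref{ThetaKK}, which is a separate step (it uses the analytic class number formula for $K$).

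For part (iv), the explanation of the factor $|N_I|$ is wrong. You claim the kernel of the norm map ${\rm Cl}^T_{S'}(L)\to{\rm Cl}^T_{S'}(L^{N_I})$ is annihilated by $|N_I|$ and then appeal to a cohomological argument; but the kernel of the norm on $T$-ray class groups is not controlled by $|N_I|$ in any such way, and the argument as stated does not give the inclusion. What actually happens is arithmetic at the level of group rings: one writes $|N_I|\,\Theta^I_{L/K,S,T}=T_{N_I}\cdot\Theta^I_{L/K,S,T}$ (using $e_I=e_Ie_{N_I}$), observes that $T_{N_I}\cdot y\in\ZZ[G]$ precisely when $q_I(y)\in\ZZ[G/N_I]$, and uses that the action of $T_{N_I}$ on ${\rm Cl}^T_{S'}(L)$ factors through the field-theoretic norm to ${\rm Cl}^T_{S'}(L^{N_I})$. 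Combined with the annihilation over $L^{N_I}/K$ (which itself comes from \cite[Th. 5.2(iii), Rem. 5.4]{bms1}, Proposition~\ref{last} and Corollary~\ref{Weil-Linvariants} --- not from the degree-zero annihilation property at the end of \S\ref{app higher fits2}), this gives the required inclusion. Your ``kernel annihilated by $|N_I|$'' route is not a valid substitute.
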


\begin{remark}\label{ThetaKK}{\em In (i) we set $\Theta^{a}_{K}:=0$ if $a<|S|-1$ and $\Theta^{|S|-1}_{K} := \theta^{(a)}_{K/K,S,T}(0)\cdot R_{K,S,T}^{-1}\cdot\Lambda e_{\bf 1}$, with $R_{K,S,T}\in\RR^\times$ the determinant of the Dirichlet regulator $\RR\cdot \mathcal{O}_{K,S}^\times \cong \RR\cdot X_{K,S}$ with respect to any $\ZZ$-bases of $\mathcal{O}_{K,S,T}^\times$ and $X_{K,S}$.
}\end{remark}

\begin{remark}\label{dals improve cor} {\em Assume $L/K$ is non-trivial and fix a proper subset $V$ of $S$ comprising places that split completely in $L$. Set $a := |V|$ and fix $v \in S\setminus V$. Then $\wp_a(S,v) = \{V\}$, $S_{\rm min}^a = V$ and $e_{(a),S} = 1$ and so Lemma \ref{fit vanishing}(iii) implies the left hand side of (\ref{prediction1}) is equal to  ${\rm Fit}_{\ZZ[G]}^a(\tilde h)$. In this case, $c^a_{S,v}\cdot \Theta_{K}^a = 0$ and so Lemma \ref{ell inv interpretation} implies that (\ref{prediction1}) is valid if and only if  

\begin{equation}\label{Fitrhv}\notag\Lambda\cdot{\rm Fit}^a_{\ZZ[G]}(\tilde h)= \Lambda\cdot\{({\wedge}_{j=1}^{j=a}\varphi_j)(\gamma_T\cdot\varepsilon_{L/K,S}^V): \varphi_j\in\Hom_G(\mathcal{O}_{L,S,T}^{\times},\ZZ[G])\}.\end{equation}
This equality (and Remark \ref{rubin-stark context}) implies Theorem \ref{main conj}(i) recovers a result \cite[Th. 1.5]{bks} proved for abelian extensions by Kurihara, Sano and the first author. In general, this equality also implies that $\gamma_T\cdot\varepsilon_{L/K,S}^V\in \Lambda\cdot\bigcap_{\ZZ[G]}^{a}\co_{L,S}^\times$, as conjectured by Sano and the first author in \cite{bses2} (as a generalisation of the conjectures formulated by Rubin \cite{R} and Popescu \cite{Pop}). 
}
\end{remark}

\begin{remark}\label{dals improve}{\em Theorem \ref{main conj} refines the predictions of \cite[Conj. 2.4.1]{dals} (and hence, by specializing to the case $r=0$, of the `non-abelian Brumer-Stark Conjecture' formulated by Nickel in \cite{nickel-aif}), and its proof also improves upon the approach of \cite{dals} in two ways. Firstly, it avoids an important technical hypothesis (concerning the cohomological-triviality of roots of unity) that is required by the arguments used in \cite{dals}; secondly, it gives a much more direct approach to the main result (Th. 2.4.1) of \cite{dals}, thereby avoiding reliance on the extensive computations in \cite[\S6.4]{dals} and the technical constructions of Ritter and Weiss in \cite{RW0}. }
\end{remark}

\begin{remark}\label{dualversionuseful}
{\em Assume ${\rm LTC}(L/K;\Lambda[G]e_{(a),S})$ and, for $v\in S$, define an $\xi(\Lambda[G])$-ideal
\[ \mathfrak{n}^a_v(L/K) := \{x\in \xi(\Lambda[G]): x\cdot (1-e_v+e_{\bf 1})e_{(a),S}\cdot\Lambda\cdot{\rm Fit}^{{\rm tr},a}_{\ZZ[G]}(\mathcal{S}_S^T(L)) \subseteq \xi(\Lambda[G])\}.\] 
Then our argument proves 
%
%
$\mathfrak{n}_{v}^a(L/K)\cdot\iota_\#\bigl({\sum}_{I\in \wp_a(S,v)} \Theta_{L/K,S,T}^I\bigr)\subseteq\xi(\Lambda[G])$ for all $I\in \wp_a(S,v)$. In addition, in the setting of claim (iii), if $x\in \mathfrak{n}^a_v(L/K)$ then the left-hand side of (\ref{Linvproduct}) belongs to $\iota_\#(\gamma_T^{-1})\cdot\xi(\Lambda[G])$.
}\end{remark}

\begin{remark}\label{dals improve3}{\em 
If $G$ is abelian, the claims in Theorem \ref{main conj}, together with those outlined in Remark \ref{dualversionuseful}, 
recover the central conjecture (Conjecture 4.3) of Livingstone Boomla and the first author in \cite{dbalb}. We recall that the latter conjecture refines and extends the conjectures of Emmons and Popescu in \cite{EP} and Valli\`{e}res in \cite{vallieres} and can be investigated numerically using the methods developed by Bley in \cite{bley st} and by McGown, Sands and Valli\`{e}res in \cite{msv}. }\end{remark}




\begin{remark} \label{exp cor}
{\em For $a\in \mathbb{N}_0$ set $\mathcal{F}_a := \Lambda\cdot{\rm Fit}_{\ZZ[G]}^{{\rm tr},a}(\mathcal{S}_S^T(L))$ and, for each $v\in S$, also $\mathcal{F}_{a,v}' := \Lambda\cdot{\rm Fit}_{\ZZ[G]}^{{\rm tr},a}(h_v)$ with $h_v$ a presentation of $\mathcal{S}_S^T(L)$ as in Theorem \ref{main result alg}. 
Then, since $\mathcal{F}_{a,v}' \subseteq \mathcal{F}_a$, Theorem \ref{main result alg} combines with the definitions of the ideals $\mathfrak{n}^a_v(L/K)$ and $\mathfrak{m}^a_v(L/K)$ to imply that for $I$ in $\wp_a(S,v)$ one has $\mathfrak{n}^a_v(L/K)\cdot e_I\cdot\mathcal{F}'_{a,v} \subseteq \xi(\Lambda[G])$ and $\mathfrak{m}^a_v(L/K)\cdot e_I\cdot\mathcal{F}'_{a,v} \subseteq \mathcal{F}_a.$ 

An explicit computation of $\mathfrak{n}_{v}^a(L/K)$ and $\mathfrak{m}_{v}^a(L/K)$ in a general setting would be rather involved since it requires some knowledge of the Fitting invariants of $\mathcal{S}_S^T(L)$. It is, nevertheless, straightforward to construct non-zero elements of 
these ideals in a purely combinatorial way since, if $t$ is the (finite) index of $\delta(\Lambda[G])$ in $\zeta(\Lambda[G])$, then $\mathfrak{n}_{v}^a(L/K)$ and $\mathfrak{m}_{v}^a(L/K)$ both contain the lowest common multiple of the denominators of the coefficients of the element $t^{-1}(1-e_v+e_{\bf 1})e_{(a),S}$ of $\QQ[G]$. See \cite[Ex. 4.3 and 5.4]{dbalb} for examples of explicit computations.
}\end{remark}

\begin{remark} \label{exp rem}
{\em The observations made in Lemma \ref{fit vanishing} can be used to make the results of Theorem \ref{main conj} and of Remarks \ref{dualversionuseful} and \ref{exp cor} more explicit. 
%
If $\widehat G_{(a),S}=\widehat G$, then $e_{(a),S} = 1$ and so the
computation of $\mathfrak{n}^a_{v}(L/K)$ and $\mathfrak{m}_{v}^a(L/K)$ is easier. 
%
%
Assume now $S\not= S^a_{\rm min}$. Then Lemma \ref{fit vanishing} (iii) implies $(e_v-e_{\bf 1})e_{(a),S}{\rm Fit}_{\ZZ[G]}^a(\tilde h) = (0)$ for $v\in S\setminus S^a_{\rm min}$. Hence, for such $v$ the left hand side of (\ref{prediction1}) is  $e_{(a),S}\cdot (\Lambda\cdot{\rm Fit}_{\ZZ[G]}^a(\tilde h))$. 
 In particular, if both $\widehat G_{(a),S}=\widehat G$ and $S^a_{\rm min} \not= S$ then for $v\in S\setminus S^a_{\rm min}$, the left hand side of (\ref{prediction1}) is $\Lambda\cdot{\rm Fit}_{\ZZ[G]}^a(\tilde h)$ and $\mathfrak{n}^a_{v}(L/K) = \mathfrak{m}_{v}^a(L/K) = \xi(\Lambda[G])$.

}
\end{remark}

\subsection{The proof of Theorem \ref{main conj}}\label{6.2}

For notational ease, we only prove Theorem \ref{main conj} in the case $\Lambda=\ZZ$ (with the proof for $\Lambda=\ZZ_{(p)}$ being identical). Throughout \S \ref{6.2}, we fix $a< |S|$.

\subsubsection{} We first establish a useful fact concerning descent. 
%
For any normal subgroup $H$ of $G$ with $\Gamma:=G/H$ we set
$$\Theta_{L^H/K,S,T}^{a}:=\xi(\ZZ[\Gamma])\cdot \{\theta^{(a)}_{L^H/K,S,T}(0)\cdot R(\phi): \phi \in \Hom_\Gamma(\mathcal{O}_{L^H,S,T}^{\times},X_{L^H,S})\},$$
with $R(\phi)$ defined exactly as in (\ref{equivariantreg}) but for $L^H/K$, and regard it as a submodule of $\zeta(\CC[G])$ via the isomorphism $\zeta(\CC[\Gamma])\cong e_H\zeta(\CC[G])$.
We use the notation $N_I$ from Theorem \ref{main conj}(iv).



%
%
%

\begin{proposition}\label{last} Fix $I$ in $\wp_a^*(S)$. Fix a normal subgroup $H$ of $G$ that contains $N_I$ and set $\Gamma:=G/H$. 
Then $Y_{L^H,I}$ is a free $\ZZ[\Gamma]$-module of rank $a$ and $e_H(\Theta^{I}_{L/K,S,T})=\Theta_{L^H/K,S,T}^{a}.$
%

%
%
\end{proposition}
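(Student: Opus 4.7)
The plan is in four steps.

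First, for claim (i): since $N_I\subseteq H$, for each $v\in I$ the decomposition subgroup of $v$ in $\Gamma=G/H$ is the image $G_vH/H$ of $G_v$ in $\Gamma$, which is trivial. Hence each $v\in I$ splits completely in $L^H/K$, so $Y_{L^H,\{v\}}\cong\ZZ[\Gamma]$, and summing over the $a$ places in $I$ gives $Y_{L^H,I}\cong\ZZ[\Gamma]^a$.

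For the equality of ideals, the second step matches the analytic factors. Using that $e_H\xi(\ZZ[G])=\xi(\ZZ[\Gamma])$ (by Lemma 3.2 of \cite{bses}, recalled in \S\ref{Whitehead section}) and the identification $e_H\zeta(\CC[G])\cong\zeta(\CC[\Gamma])$, it suffices to show that
\[ e_H\cdot e_I\cdot\theta^{(a)}_{L/K,S,T}(0)=\theta^{(a)}_{L^H/K,S,T}(0). \]
Inflation invariance of Artin $L$-functions and of the factors defining $\gamma_T$ gives $L_{S,T}(\check\chi,s)=L_{S,T}(\check{\bar\chi},s)$ and $S_\chi=S_{\bar\chi}$ whenever $\chi\in\widehat G$ factors through $\Gamma$ as $\bar\chi$. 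By claim (i) one has $I\subseteq S_{\bar\chi}$ for \emph{every} $\bar\chi\in\widehat\Gamma$, and by the character-theoretic argument of Lemma \ref{fit vanishing}(i) one has $e_{\bar\chi}L_S^{a\bar\chi(1)}(\check{\bar\chi},0)=0$ whenever $\bar\chi\neq{\bf 1}$ and $|S_{\bar\chi}|>a$. Thus the only nontrivial contributions to $\theta^{(a)}_{L^H/K,S,T}(0)$ come from $\bar\chi$ with $S_{\bar\chi}=I$, together with the trivial character's contribution when $a=|S|-1$, and these precisely match $e_He_I\theta^{(a)}_{L/K,S,T}(0)$.

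Third, for the algebraic factor, I would introduce the restriction map
\[ \beta:\Hom_G^I(\co_{L,S,T}^\times,X_{L,S})\to\Hom_\Gamma(\co_{L^H,S,T}^\times,X_{L^H,S}),\quad \psi\mapsto\psi|_{\co_{L^H,S,T}^\times}, \]
which is well-defined since $X_{L,S}^H=X_{L^H,S}$, and whose image lies in $\Hom_\Gamma^I=\Hom_\Gamma$ by claim (i) together with Remark \ref{easy split case}. For each $\chi\in\widehat G$ with $H\subseteq\ker\chi$ the identity $e_\chi=e_\chi e_H$ implies that the $\chi$-isotypic components of $\RR\cdot\co_{L,S}^\times$ and $\RR\cdot X_{L,S}$ coincide with those of $\RR\cdot\co_{L^H,S}^\times$ and $\RR\cdot X_{L^H,S}$ respectively; under this identification $R_{L,S}$ restricts to $R_{L^H,S}$, so $e_HR(\psi)=R(\beta(\psi))$. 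Combined with the second step, this yields the inclusion $e_H\Theta^I_{L/K,S,T}\subseteq\Theta^a_{L^H/K,S,T}$.

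The hard part will be the reverse inclusion. For $\phi\in\Hom_\Gamma$ the natural candidate is $\psi_\phi:=\phi\circ N_{L/L^H}:\co_{L,S,T}^\times\to X_{L^H,S}\subseteq X_{L,S}$, which is $G$-equivariant because $H$ is normal in $G$; one must verify that $\psi_\phi\in\Hom_G^I$ by lifting its image in $Y_{L,I}$ (which in fact lies in $Y_{L^H,I}\cong\ZZ[\Gamma]^a$) through the canonical surjection $\ZZ[G]^a\twoheadrightarrow\ZZ[\Gamma]^a$. The resulting identity $\beta(\psi_\phi)=|H|\phi$ produces scalar factors $|H|^{k_\chi}$ on each $\chi$-component of the reduced norm, and these are the main obstacle: they must be absorbed by the $\xi(\ZZ[\Gamma])$-module structure. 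To achieve this I would exploit the $\QQ[G]$-semisimple decomposition $\co_{L,S,T}^\times\otimes\QQ\cong(\co_{L^H,S,T}^\times\otimes\QQ)\oplus W$ to construct a $\QQ$-valued lift $\tilde\phi:=\phi\circ e_H$ of $\phi$ (extending by zero on $W$) satisfying $\beta(\tilde\phi)=\phi$ exactly, and then vary $\phi$ over $\Hom_\Gamma$ (using multiplicativity of the reduced norm and additivity of $\Hom$) to show that the $\xi(\ZZ[\Gamma])$-submodule generated by $\{e_HR(\psi):\psi\in\Hom_G^I\}$ already contains every $R(\phi)$, completing the reverse inclusion.
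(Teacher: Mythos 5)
Your first step (freeness of $Y_{L^H,I}$) is correct, and your second step (matching the analytic factor via $e_H e_I\theta^{(a)}_{L/K,S,T}(0)=\theta^{(a)}_{L^H/K,S,T}(0)$) agrees in substance with the paper. The difficulties begin with the algebraic factor.

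In step 3 you assert that, under the identification of $\chi$-isotypic components for characters $\chi$ that factor through $\Gamma$, ``$R_{L,S}$ restricts to $R_{L^H,S}$, so $e_H R(\psi)=R(\beta(\psi))$''. This is false. The Dirichlet regulator does \emph{not} commute with the naive identifications: writing $w'=w_{L^H}$, one has $\log|u|_w=[L_w:(L^H)_{w'}]\log|u|_{w'}$ for $u\in\co_{L^H,S}^\times$, and tracing this through the standard identifications (inclusion on units, pushforward $w\mapsto w_{L^H}$ on $X$) from \cite[Chap. I, \S6.5]{tate} gives $e_H R_{L,S}=|H|\cdot R_{L^H,S}$, \emph{not} $e_H R_{L,S}=R_{L^H,S}$. (Under the pullback identification $w'\mapsto\sum_{w\mid w'}w$ the discrepancy is not even a single scalar — the local degrees $[L_w:(L^H)_{w'}]$ vary with $w'$.) Consequently $e_H R(\psi)\neq R(\beta(\psi))$ and the forward inclusion does not follow as claimed; the $|H|$ you later encounter in step 4 is already present here.

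Your step 4 correctly identifies $\beta(\psi_\phi)=|H|\phi$ as the obstacle, but the proposed remedy — constructing $\tilde\phi:=\phi\circ e_H$ ``extending by zero on $W$'' and arguing by multiplicativity — does not close the gap. The map $\tilde\phi$ is only $\QQ[G]$-linear, while the modules $\Theta^I_{L/K,S,T}$ and $\Theta^a_{L^H/K,S,T}$ are generated by $R(\psi)$ for \emph{integral} $\psi\in\Hom_G^I(\co_{L,S,T}^\times,X_{L,S})$ and $\Hom_\Gamma(\co_{L^H,S,T}^\times,X_{L^H,S})$, so a $\QQ$-valued lift does not produce an element of the correct source. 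The paper's proof avoids this: using the parametrisation from Lemma \ref{ell inv interpretation}, both sides are re-expressed in terms of $a$-tuples $\varphi_\bullet$ in $\Hom_G(\co_{L,S,T}^\times,\ZZ[G])^a$ (resp. over $\Gamma$), and the key fact is that the restriction map $\varrho_H:\Hom_G(\co_{L,S,T}^\times,\ZZ[G])\to\Hom_\Gamma(\co_{L^H,S,T}^\times,\ZZ[\Gamma])$ has image exactly $|H|\cdot\Hom_\Gamma(\co_{L^H,S,T}^\times,\ZZ[\Gamma])$ — that is, $|H|$ appears \emph{uniformly}, not as character-dependent factors $|H|^{k_\chi}$. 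This factor of $|H|$ then cancels \emph{exactly} against the factor of $|H|$ in $e_H R_{L,S}=|H|R_{L^H,S}$ inside the reduced norm, giving both inclusions at once. That precise cancellation is the mechanism your proposal is missing.
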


\begin{proof} 
It is clear that each place in $I$ splits completely in $L^H/K$ and hence that $Y_{L^H,I}$ is a free $\ZZ[\Gamma]$-module of rank $a$. To prove the remainng assertion, we note that $e_H\xi(\ZZ[G])=\xi(\ZZ[\Gamma])$ (by \cite[Lem. 3.2 (v)]{bses}), $e_{H}\cdot\theta^{(a)}_{L/K,S,T}(0)=\theta^{(a)}_{L^H/K,S,T}(0)$ and
, with $e_I^\Gamma$ the sum of primitive idempotents $e$ of $\zeta(\CC[\Gamma])$ with $e(\CC\cdot\ker\bigl(X_{L^H,S}\to Y_{L^H,I}\bigr))=0$, Lemma \ref{stick vanishing} implies  $\theta^{(a)}_{L^H/K,S,T}(0)=e_I^\Gamma\theta^{(a)}_{L^H/K,S,T}(0)$. It is thus enough to show that
\begin{equation}\label{enoughregs}e_H\cdot\{e_I R(\psi):\psi\in\Hom_G^{I}(\co_{L,S,T}^\times,X_{L,S})\}=\{e_I^\Gamma R(\phi): \phi \in \Hom_\Gamma(\mathcal{O}_{L^H,S,T}^{\times},X_{L^H,S})\}.\end{equation}
We fix a place $w_I$ in $(S\setminus I)_L$. For $v \in I$, we write $\kappa_v: \ZZ[G]\to X_{L,S}$ and $\kappa_v': \ZZ[\Gamma] \to X_{E,S}$ for the maps sending $g \in G$ to $g(w_v-w_I)$ and $\gamma\in \Gamma$ to $\gamma(w_{v,E}- w_{I,E})$. For $\varphi_\bullet=(\varphi_v)_{v\in I}$ in $\Hom_G(\co_{L,S,T}^\times,\ZZ[G])^{a}$, respectively $\Hom_\Gamma(\co_{L^H,S,T}^\times,\ZZ[\Gamma])^{a}$, we write $\psi_{\varphi_\bullet}$ for ${\sum}_{v \in I}\kappa^I_v\circ \varphi_v\in\Hom_G(\co_{L,S,T}^\times,X_{L,S})$, respectively $ {\sum}_{v \in I}\kappa'_v\circ \varphi_v \in \Hom_\Gamma(\mathcal{O}_{L^H,S,T}^{\times},X_{L^H,S})$. Then, since $\Hom_\Gamma^{I}(\co_{L^H,S,T}^\times,X_{L^H,S})=\Hom_\Gamma(\co_{L^H,S,T}^\times,X_{L^H,S})$ (cf. Remark \ref{easy split case}), the argument of Lemma \ref{ell inv interpretation} reduces the verification of (\ref{enoughregs}) to showing that
\begin{equation}\label{enoughregs2}e_H\cdot\{R(\psi_{\varphi_\bullet}):\varphi_\bullet\in\Hom_G(\co_{L,S,T}^\times,\ZZ[G])^{a}\}=\{R(\phi_{\varphi_\bullet}): \varphi_\bullet\in\Hom_\Gamma(\co_{L^H,S,T}^\times,\ZZ[\Gamma])^{a}\}.\end{equation}

We next write $\varrho_H$ for the natural composite homomorphism
\begin{equation*}\label{varrhoH}\Hom_{G}(\co_{L,S,T}^{\times},\ZZ[G])\to\Hom_{G}(\co_{L^H,S,T}^{\times},\ZZ[G])\to\Hom_{\Gamma}(\co_{L^H,S,T}^{\times},\ZZ[\Gamma]).\end{equation*} 
The first arrow here is surjective since $\mathcal{O}_{L,S,T}^{\times}/\mathcal{O}_{L^H,S,T}^{\times}$ is torsion-free. In addition, one has $\Hom_G(\mathcal{O}_{L^H,S,T}^{\times},\ZZ[G]) = \Hom_G(\mathcal{O}_{L^H,S,T}^{\times},T_H\cdot \ZZ[G])$ and the projection $\ZZ[G] \to \ZZ[\Gamma]$ sends $T_H$ to $|H|$. It follows that $\im(\varrho_H)=|H|\cdot \Hom_{\Gamma}(\mathcal{O}_{L^H,S,T}^{\times},\ZZ[\Gamma])$ and so (\ref{enoughregs2}) is obtained via 
\begin{align*}
&\,e_H\cdot\{R(\psi_{\varphi_\bullet}):\varphi_\bullet\in\Hom_G(\co_{L,S,T}^\times,\ZZ[G])^{a}\}\\=&\{{\rm Nrd}_{\CC[\Gamma]}(e_H(\CC\cdot\psi_{\varphi_\bullet})\circ e_H(R_{L,S})^{-1}):\varphi_\bullet\in\Hom_G(\co_{L,S,T}^\times,\ZZ[G])^{a}\}\\
=&\{{\rm Nrd}_{\CC[\Gamma]}(\CC\cdot\phi_{\varphi_\bullet}\circ e_H(R_{L,S})^{-1}):\varphi_\bullet\in|H|\cdot\Hom_{\Gamma}(\mathcal{O}_{L^H,S,T}^{\times},\ZZ[\Gamma])^{a}\}\\
=&\{{\rm Nrd}_{\CC[\Gamma]}(\CC\cdot\phi_{\varphi_\bullet}\circ (|H|e_H)(R_{L,S})^{-1}):\varphi_\bullet\in\Hom_{\Gamma}(\mathcal{O}_{L^H,S,T}^{\times},\ZZ[\Gamma])^{a}\}\\
=&\{{\rm Nrd}_{\CC[\Gamma]}(\CC\cdot\phi_{\varphi_\bullet}\circ R_{L^H,S}^{-1}):\varphi_\bullet\in\Hom_{\Gamma}(\mathcal{O}_{L^H,S,T}^{\times},\ZZ[\Gamma])^{a}\}\\
=&\{R(\phi_{\varphi_\bullet}): \varphi_\bullet\in\Hom_\Gamma(\co_{L^H,S,T}^\times,\ZZ[\Gamma])^{a}\}.&
\end{align*}
Here, the fourth equality uses the functoriality of the Dirichlet regulator map under change of field (as expressed by the commutativity of the diagram in \cite[Chap. I, \S6.5]{tate}).\end{proof}

\subsubsection{}We can now prove Theorem \ref{main conj}(i). At the outset we fix a place $v\in S$, a presentation $\tilde h=\tilde h_v$ of the $G$-module $\mathcal{S}_S^T(L)^{\rm tr}$ as in Theorem \ref{main result alg} and $a\in\NN_0$ with $a< |S|$. We set $\mathcal{F}_a:={\rm Fit}_{\ZZ[G]}^a(\tilde h)$.  
 %
%
To derive the decomposition (\ref{prediction1}) from Theorem \ref{main result alg} it is enough to prove 
\begin{equation}\label{first needed reduced} e_{\bf 1}\cdot \mathcal{F}_a =\begin{cases} 
0, &\text{if }a<|S|-1,\\ 
e_{\bf 1}\cdot \theta^{(a)}_{K/K,S,T}(0)\cdot R_{K,S,T}^{-1}\cdot\ZZ, &\text{if }a=|S|-1,\end{cases}\end{equation}
%
%
and, if ${\wp}_a(S,v)\not=\emptyset$, also
\begin{equation}\label{second needed} e_I\cdot  \mathcal{F}_a =  \Theta^{I}_{L/K,S,T} \,\,\,\text{ for all }\,\,I \in {\wp}_a(S,v).  \end{equation}
%
%

We then observe that (\ref{first needed reduced}) is valid if $a< |S|-1$ because $e_{\bf 1}\cdot \mathcal{F}_a=0$ by Lemma \ref{fit vanishing}(ii). 
Next we note that if $a = |S|-1$, then Remark \ref{asinLY}(i) with $H=G$ implies that $e_{\bf 1}\cdot \mathcal{F}_{a} = e_{\bf 1}\cdot {\rm Fit}_\ZZ^a(\mathcal{S}^T_S(K)^{\rm tr})$. 
%
Since $X_{K,S}$ is a free $\ZZ$-module of rank $a$, one also has 
\[ {\rm Fit}^{a}_{\ZZ}(\mathcal{S}_S^T(K)^{\rm tr})={\rm Fit}^{a}_{\ZZ}({\rm Cl}_S^T(K)\oplus X_{K,S})={\rm Fit}^0_{\ZZ}({\rm Cl}_S^T(K))=|{\rm Cl}_S^T(K)|\cdot\ZZ.\]
%
The verification of (\ref{first needed reduced}) now follows from the analytic class number formula for $K$. 

Turning now to (\ref{second needed}), we note first that if $G$ is trivial, then $\wp_a(S,v)\not=\emptyset$ only if both $a = |S|-1$ and $v = v_0$ in which case $\wp_a(S,v) = \{I\}$ with $I := S\setminus \{v_0\}$ and $e_I = e_{\bf 1} =  1$. In this case, therefore, the equality (\ref{second needed}) is equivalent to (\ref{first needed reduced}) and so is proved above.

In the sequel we assume $G$ is not trivial. In this case 
$e_I \cdot e_{N_I} = e_I$. This equality, combined with Proposition \ref{last} and Remark \ref{asinLY}(i) (both applied to $H=N_I$), 
implies that (\ref{second needed}) is valid if one has ${\rm Fit}_{\ZZ[G/N_I]}^a(\tilde h_{N_I}) = \Theta^{a}_{L^{I}/K,S,T}$, where $\tilde h_{N_I}$ denotes the $N_I$-coinvariance of $\tilde h$. 

To prove this we note that, since places in $I$ split completely in $L^I$, Proposition \ref{ltc2}(iii) implies ${\rm LTC}(L^{I}/K)$ follows from ${\rm LTC}(L/K;\ZZ[G]e_{(a),S})$. In addition, $\tilde h_{N_I}$ validates Theorem \ref{main result alg} relative to $L^{I}/K$ (cf. Remark \ref{asinLY}(i)) and so is related to Weil-Stark elements by the result of \cite[Th. 5.2(ii)]{bms1} for $L^I/K$, $S$, $\Pi=\ZZ[G/N_I]$ and $\pi:X_{L^{I},S}\to Y_{L^{I},I}$, which gives
$${\rm Fit}_{\ZZ[G/N_I]}^a(\tilde h_{N_I}) = \{({\wedge}_{i=1}^{i=a}\varphi_i)(\gamma_T\cdot\omega):\,\omega\in\W^{\Pi,\pi}_{L^{I}/K,S},\,  \varphi_i\in\Hom_{G/N_I}(\mathcal{O}_{L^{I},S,T}^{\times},\ZZ[G/N_I])\}.$$ 
The required equality follows directly upon combining the latter equality with Corollary \ref{Weil-Linvariants} and Example \ref{exam 2r}(i), 
also applied to $L^I/K$.
This proves Theorem \ref{main conj}(i). 

\subsubsection{}Turning now to claims (ii) and (iii) of Theorem \ref{main conj}, we first let $a$ be any integer with $0\leq a<|S|$. 
The equality (\ref{prediction1}) combines with (\ref{transpose equation}) and Remark \ref{asinLY}(ii) to imply that
\begin{align}\label{nasform}\mathfrak{m}_{v}^a(L/K)\cdot\iota_\#\bigl({{\sum}}_{I\in \wp_a(S,v)} \Theta_{L/K,S,T}^I\bigr)&\subseteq \mathfrak{m}_{v}^a(L/K)\cdot(1-e_v+e_{{\bf 1}})e_{(a),S}\cdot\iota_\#\bigl({\rm Fit}^{a}_{\ZZ[G]}(\tilde h)\bigr)\\
\notag&=\mathfrak{m}_{v}^a(L/K)\cdot(1-e_v+e_{{\bf 1}})e_{(a),S}\cdot{\rm Fit}^{{\rm tr},a}_{\ZZ[G]}(\tilde h^{\rm tr})\\
\notag&\subseteq \mathfrak{m}_{v}^a(L/K)\cdot(1-e_v+e_{{\bf 1}})e_{(a),S}\cdot{\rm Fit}^{{\rm tr},a}_{\ZZ[G]}(\mathcal{S}_S^T(L))
\end{align}
is contained in ${\rm Fit}^{{\rm tr},a}_{\ZZ[G]}(\mathcal{S}_S^T(L))$, and hence proves claim (ii). In order to prove claim (iii) we now assume that $a<|S|-1$, and fix $v$, $\varphi$ and $x$ as in this result. Then claim (iii) follows directly from (\ref{nasform}) and the fact that Lemma \ref{exam 2} (iii) implies 
\begin{align}\label{replaceLinv} (1-e_v)e_{(a),S}\cdot x\cdot \iota_\#(\mathscr{L}^a(\varphi))&=x\cdot \iota_\#\bigl({{\sum}}_{I\in \wp_a(S,v)}e_I(\theta^{(a)}_{L/K,S}\cdot R(\varphi))\bigr)\\  \notag&=\iota_\#(\gamma_T^{-1})\cdot x\cdot \iota_\#\bigl({{\sum}}_{I\in \wp_a(S,v)}e_I(\theta^{(a)}_{L/K,S,T}\cdot R(\varphi))\bigr)\\ \notag&\in\iota_\#(\gamma_T^{-1})\cdot\mathfrak{m}_{v}^a(L/K)\cdot\iota_\#\bigl({{\sum}}_{I\in \wp_a(S,v)} \Theta_{L/K,S,T}^I\bigr).\end{align}
%

\begin{remark}\label{seeremarkbelow}
{\em The containment $\mathfrak{n}_{v}^a(L/K)\cdot\iota_\#\bigl({\sum}_{I\in \wp_a(S,v)} \Theta_{L/K,S,T}^I\bigr)\subseteq\xi(\Lambda[G])$ claimed in Remark \ref{dualversionuseful} follows in just the same way after replacing $\mathfrak{m}_{v}^a(L/K)$ by $\mathfrak{n}_{v}^a(L/K)$ in (\ref{nasform}). 

Similarly, in the setting of Theorem \ref{main conj}(iii), if $x\in \mathfrak{n}^a_v(L/K)$ then this modified containment combines with (\ref{replaceLinv}) to imply the left-hand side of (\ref{Linvproduct}) belongs to $\iota_\#(\gamma_T^{-1})\cdot\xi(\Lambda[G])$.
}\end{remark}

\subsubsection{}
To prove Theorem \ref{main conj}(iv), we fix $a\in\NN_0$ with $a< |S|$, $I\in \wp_a^\ast(S)$ and $v_0\in S\setminus I$ and set $N:= N_I$ and $L^{I}:=L^{N}$. Then one has $$|N|\Theta_{L/K,S,T}^{I}=(|N|e_I)\Theta_{L/K,S,T}^{I}=(|N|e_{N}e_I)\Theta_{L/K,S,T}^{I}=T_{N}\cdot\Theta^{I}_{L/K,S,T}$$ and so it is enough to show that 
$\delta(\ZZ[G])\cdot T_{N}\cdot\Theta^{I}_{L/K,S,T}\subseteq{\rm Ann}_{\ZZ[G]}({\rm Cl}_{S'}^T(L))$.

Writing $q_{I}:\CC[G]\to\CC[G/N_I]$ for the canonical map, 
we next observe that
\begin{equation}\label{Iprojection}q_I\bigl(\delta(\ZZ[G])\cdot \Theta^{I}_{L/K,S,T}\bigr)\subseteq\delta(\ZZ[G/N])\cdot \Theta^{a}_{L^{I}/K,S,T}.\end{equation} Here we have combined Proposition \ref{last} with \cite[Lem. 3.5 (vii)]{bses}.

We now combine the result of \cite[Th. 5.2(iii), Rem. 5.4]{bms1} for $L^I/K$, $S$, $\Pi=\ZZ[G/N]$ and $\pi:X_{L^{I},S}\to Y_{L^{I},I}$, which gives that
$$\delta(\ZZ[G/N])\cdot\{({\wedge}_{i=1}^{i=a}\varphi_i)(\gamma_T\cdot\omega):\,\omega\in\W^{\Pi,\pi}_{L^{I}/K,S},\,  \varphi_i\in\Hom_{G/N}(\mathcal{O}_{L^{I},S,T}^{\times},\ZZ[G/N])\}$$ is contained in ${\rm Ann}_{\ZZ[G/N]}({\rm Cl}_{S'}^T(L^{I}))$, 
with Corollary \ref{Weil-Linvariants} and Example \ref{exam 2r}(i), and deduce that $\delta(\ZZ[G/N])\cdot \Theta^{a}_{L^{I}/K,S,T}\subseteq{\rm Ann}_{\ZZ[G/N]}({\rm Cl}_{S'}^T(L^{I}))$.

In view of (\ref{Iprojection}), the necessary inclusion $\delta(\ZZ[G])\cdot T_{N}\cdot\Theta^{I}_{L/K,S,T}\subseteq{\rm Ann}_{\ZZ[G]}({\rm Cl}_{S'}^T(L))$ now follows because for a given element $y$ of $\CC[G]$, one has $T_{N}\cdot y\in \ZZ[G]$ if and only if $q_I(y)\in \ZZ[G/N]$, and the action of $T_{N}$ induces the field-theoretic norm map ${\rm Cl}^T_{S'}(L)\to{\rm Cl}^T_{S'}(L^{I}).$
This completes the proof of Theorem \ref{main conj}. 

\section{Special cases}\label{last section} 

In this section we specialise to cases in which the leading term conjecture is known to be valid and so Theorem \ref{main conj} is  unconditional. In particular, we prove Theorem \ref{end result intro} and \ref{lastone2}. 

\subsection{Totally real fields}\label{TR section} In this section we assume $L$ is totally real and fix an odd prime $p$. At present, the main evidence in support of the leading term conjecture in this case is provided by the following result. In this result we write $\mu_p(L)$ for the $p$-adic cyclotomic $\mu$-invariant of $L$ and we refer to `Breuning's local epsilon constant conjecture' formulated in \cite{breuning} and to the `$p$-adic Stark Conjecture at
$s=1$' that is discussed by Tate in \cite[Chap. VI, \S5]{tate}, where it is attributed to Serre \cite{serre1}.

\begin{proposition}\label{iwasawa theory+} If $L$ is totally real, then ${\rm LTC}(L/K,\ZZ_{(p)}[G])$ is valid whenever  $p$ is odd and satisfies all of the following conditions.
\begin{itemize}
\item[(i)] $p$ is prime to $|G|$ or $\mu_p(L)$ vanishes;
\item[(ii)] Breuning's local epsilon constant conjecture is valid for all extensions obtained by $p$-adically completing $L/K$;
\item[(iii)] The $p$-adic Stark Conjecture at $s=1$ is valid for all $\CC_p$-valued characters of $G$. 
\end{itemize}
\end{proposition}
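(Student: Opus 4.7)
The plan is to reduce ${\rm LTC}(L/K,\ZZ_{(p)}[G])$ via Proposition \ref{ltc2}(i) to the equality $\delta_G(\theta^\ast_{L/K,S,T}(0)) = \chi_{\ZZ[G]}(C_{L,S,T}, R_{L,S})$ in the $p$-part of $K_0(\ZZ[G], \RR[G])$, and to deduce this from non-commutative Iwasawa theory combined with the three stated inputs. First, I would pass to the cyclotomic $\ZZ_p$-extension $L_\infty/L$, with profinite Galois group $\mathcal{G}:=\Gal(L_\infty/K)$, and apply the equivariant main conjecture for totally real fields. In its strongest available form this is due to Ritter--Weiss and independently Kakde under the assumption $\mu_p(L)=0$; when $p\nmid|G|$ the statement reduces, essentially via Wiles's original main conjecture, to an equality of characteristic ideals. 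This identifies an Iwasawa-theoretic Euler characteristic with a class determined by the Deligne--Ribet--Cassou-Nogu\`es $p$-adic $L$-function of $L_\infty/K$, and it is precisely this dichotomy that is recorded in condition~(i).

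Next, I would descend this Iwasawa-theoretic identity to the finite level of $L/K$. The descent uses the natural exact triangles relating $C_{L,S,T}$ to its Iwasawa-theoretic analogue over $\mathcal{G}$, standard control theorems for the relevant cohomology modules, and the compatibility of the extended boundary homomorphism with scalar extension (as recorded in \S\ref{eTNCreview}). Breuning's local epsilon constant conjecture (condition (ii)), applied to each completion of $L/K$ at a place above $p$, enters at this stage: it is needed to identify the local Euler-factor-type contributions produced by the descent with the correct local normalisations prescribed by the formulation of LTC. Without it, one obtains only an equality up to the product of the classes determined by Breuning's local invariants.

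Finally, the $p$-adic Stark conjecture at $s=1$ (condition (iii)) serves to compare the $p$-adic $L$-values remaining after descent with the complex leading terms $L^\ast_S(\check\chi,0)$. Via the $p$-adic functional equation, these differ by a ratio of $p$-adic and complex Dirichlet regulators, and the $p$-adic Stark conjecture provides exactly the rationality of this ratio that is needed to match the regulator $R_{L,S}$ implicit in the definition of $\chi_{\ZZ[G]}(C_{L,S,T},R_{L,S})$. Combining these four ingredients yields the desired equality. The main obstacle is the bookkeeping in the descent step, and especially the correct matching of local contributions at wildly ramified places above $p$ with Breuning's local normalisations; once these local factors are in place, the remaining archimedean-to-$p$-adic comparison is essentially formal from condition~(iii) and the functional equation.
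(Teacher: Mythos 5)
Your proposal follows essentially the same route as the paper: the paper forms $T\Omega := \delta_G(\theta^*_{L/K,S,T}(0)) - \chi_{\ZZ[G]}(C_{L,S,T},R_{L,S})$, uses condition (iii) together with the argument of Johnston--Nickel to establish rationality of $T\Omega$, reduces to the vanishing of the $p$-primary image $T\Omega_p$, and then appeals to the Iwasawa-theoretic descent argument of \cite[\S 9.1]{dals} together with Breuning's theorem \cite[Th. 4.1]{breuning}, which is exactly the combination of the equivariant main conjecture (under condition (i)), local $\varepsilon$-constant normalisations at $p$-adic places (condition (ii)), and the $p$-adic/archimedean comparison via the $p$-adic Stark conjecture (condition (iii)) that you sketch from scratch. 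The only additional ingredient the paper makes explicit that you leave implicit is the preliminary rationality step (that $T\Omega$ lies in $K_0(\ZZ[G],\QQ[G])$), which is what legitimises the reduction to a purely $p$-adic vanishing statement before the Iwasawa descent is invoked.
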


\begin{proof} With $S$ and $T$ as in (\ref{data fix}), we define an element of  $K_0(\ZZ[G],\RR[G])$ by setting $T\Omega:= \delta_{G}(\theta^*_{L/K,S,T}(0))-\chi_{\ZZ[G]}(C_{L,S,T},R_{L,S})$.  
%
%
Then, under condition (iii), the argument of Johnston and Nickel in \cite[Th. 8.1(i), Prop. 7.1]{JLMS} implies that $T\Omega$ belongs to $K_0(\ZZ[G],\QQ[G])$. 
In this case, by Proposition \ref{ltc2}, ${\rm LTC}(L/K,\ZZ_{(p)}[G])$ is thus equivalent to asserting  $T\Omega$ belongs to the kernel of the scalar extension map $K_0(\ZZ[G],\QQ[G]) \to K_0(\ZZ_{(p)}[G],\QQ[G])$. 

In particular, since the scalar extension map $K_0(\ZZ_{(p)}[G],\QQ[G])\to K_0(\ZZ_p[G],\QQ_p[G])$ is bijective, both groups being naturally isomorphic to the Grothendieck group of finite $G$-modules that have $p$-power order and finite projective dimension (cf. \cite[Vol. II, p.73]{curtisr}), it is therefore enough to show that, under the given hypotheses, the image $T\Omega_p$ of $T\Omega$ under the scalar extension map $K_0(\ZZ[G],\QQ[G]) \to K_0(\ZZ_p[G],\QQ_p[G])$ vanishes. 

Next we recall that, by the argument given in \cite[\S9.1]{dals}, one knows that $T\Omega_{p}$ vanishes provided that all of the following conditions are satisfied: the $p$-adic Stark Conjecture at
$s=1$ is valid for all $\CC_p$-valued characters of $G$; if $p$ divides $|G|$, then $\mu_p(L)$ vanishes; the `$p$-component' of a certain element $T\Omega^{\rm loc}(\QQ(0)_L,\ZZ[G])$ of $K_0(\ZZ[G],\RR[G])$ vanishes.

The element $T\Omega_p$ therefore does vanish under the given hypotheses, since the result \cite[Th. 4.1]{breuning} of Breuning implies that the $p$-component of $T\Omega^{\rm loc}(\QQ(0)_L,\ZZ[G])$ vanishes provided that the `local epsilon constant conjecture' formulated in \cite{breuning} is valid for all extensions that are obtained by completing $L/K$ at a $p$-adic place.
%
%
\end{proof}

%


\begin{remark}\label{muinvariants}{\em If $p$ divides $|G|$, then the assumption $\mu_p(L)=0$ guarantees that the main conjecture of non-commutative Iwasawa theory is valid for $L^{\rm cyc}/K$, with $L^{\rm cyc}$ the cyclotomic $\ZZ_p$-extension of $L$. However, in \cite{JN3}, Johnston and Nickel identify families of $L/K$ for which one can prove this conjecture without assuming $\mu_p(L)=0$ (or $p\nmid|G|$). In such cases, Proposition \ref{iwasawa theory+} implies ${\rm LTC}(L/K,\ZZ_{(p)}[G])$ whenever $p$ satisfies (ii) and (iii).  More generally, as explained in \cite[Rem. 4.3]{JN3}, $\mu_p(L)=0$ if the classical Iwasawa `$\mu=0$' conjecture is valid for $L(\zeta_p)$ at $p$, where $\zeta_p$ denotes a primitive $p$-th root of unity. Thus a result of Ferrero and Washington \cite{fw} implies that $\mu_p(L)=0$ if $L$ is an abelian field.
}\end{remark}

\begin{remark}\label{bleycobbe}{\em Breuning's conjecture is valid for all tamely ramified extensions of local fields \cite[Th. 3.6]{breuning} and for certain classes of wildly ramified extensions (cf. Bley and Cobbe \cite{bc}). 
}\end{remark}

\begin{remark}\label{padicStark}{\em The $p$-adic Stark Conjecture at $s=1$ for the trivial character of $G$ is equivalent to Leopoldt's conjecture for $K$ and $p$. Moreover, in \cite{JLMS}, Johnston and Nickel identify families of $L/K$ for which ${\rm LTC}(L/K,\ZZ_{(p)}[G])$ is valid if $p$ satisfies conditions (i) and (ii) of Proposition \ref{iwasawa theory+} as well as Leopoldt's conjecture for $L$ and $p$. They also prove that the $p$-adic Stark Conjecture at $s=1$ is valid for all absolutely abelian characters.
}\end{remark}



To end this section, we offer a concrete example of the sort of consequences that follow by combining Proposition \ref{iwasawa theory+} 
with Theorem \ref{main conj}.

\begin{corollary}\label{tr intro} Assume $L$ is a totally real ramified extension of $K$. Set $S := S_K^\infty \cup S^{\rm ram}_{L/K}$ and fix  a set $T$ as in (\ref{data fix}) and a prime $p$ as in Proposition \ref{iwasawa theory+}. 
%
%
%
%
Then for $\alpha\in \delta(\ZZ_{(p)}[G])$,  $v\in S^{\rm ram}_{L/K}$, $g\in G_v$ and 
 $\psi\in\Hom_G(\co_{L,S,T}^\times,X_{L,S})$, the product $\alpha\cdot(g-1)\cdot\theta^{([K:\QQ])}_{L/K,S,T}(0)\cdot R(\psi)$ belongs to $\ZZ_{(p)}[G]$ and annihilates ${\rm Cl}^T(L)_{(p)}$. 
\end{corollary}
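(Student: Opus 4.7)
The plan is to first invoke Proposition~\ref{iwasawa theory+} under the three conditions imposed on $p$ and $L/K$ to obtain the validity of ${\rm LTC}(L/K,\ZZ_{(p)}[G])$, and hence \emph{a fortiori} of ${\rm LTC}(L/K,\ZZ_{(p)}[G]e_{(a),S})$ for $a:=[K:\QQ]$. Theorem~\ref{main conj} can then be applied with this value of $a$ and with $\Lambda=\ZZ_{(p)}$.

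Next, I would unpack the structure of the problem. Since $L$ (and hence $K$) is totally real, $|S_K^\infty|=a$ and every archimedean place splits completely in $L/K$; in particular $G_v=1$ for $v\in S_K^\infty$. This forces $S_K^\infty\subseteq S_\chi$ for every $\chi\in\widehat G$, giving $\widehat G_{(a),S}=\widehat G$, $e_{(a),S}=1$, and $\theta^{(a)}_{L/K,S,T}(0)=\theta^{\langle a\rangle}_{L/K,S,T}(0)$. Since $L/K$ ramifies, $a<|S|$, and any non-trivial $\chi\in\widehat G'_{a,S}$ must satisfy $S_\chi=S_K^\infty$. Assuming $\widehat G'_{a,S}\neq\emptyset$ (otherwise the expression in the corollary vanishes and the claim is trivial), one therefore has $S^a_{\rm min}=S_K^\infty$ and $\wp_a(S,v)=\{S_K^\infty\}=:\{I\}$ for every $v\in S^{\rm ram}_{L/K}$.

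For $g\in G_v$, the inclusion $g\in N_v$ gives $ge_v=e_v$ and hence $(g-1)=(g-1)(1-e_v)$; moreover $(g-1)e_{\mathbf 1}=0$. Combining this with Lemma~\ref{exam 2}(iii) (in the main case $a<|S|-1$; the boundary case $a=|S|-1$ is handled similarly, the extra $e_{\mathbf 1}$-contribution being killed by the cancellation just noted) and with the identity $\theta^{(a)}=\theta^{\langle a\rangle}$, one obtains
$(g-1)\theta^{(a)}_{L/K,S,T}(0)R(\psi)=(g-1)\cdot e_I\theta^{(a)}_{L/K,S,T}(0)R(\psi)$.
Since every place of $I=S_K^\infty$ splits completely in $L/K$, Remark~\ref{easy split case} gives $\Hom_G^I=\Hom_G$, so that $e_I\theta^{(a)}_{L/K,S,T}(0)R(\psi)$ belongs to $\Theta^I_{L/K,S,T}$. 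Since $G_{v'}=1$ for each $v'\in I$, the normal closure $N_I$ is trivial and $|N_I|=1$, so Theorem~\ref{main conj}(iv) applied with $v_0:=v$ gives
$$\alpha e_I\theta^{(a)}_{L/K,S,T}(0)R(\psi)\in\delta(\ZZ_{(p)}[G])\cdot\Theta^I_{L/K,S,T}\subseteq\ZZ_{(p)}\cdot\Ann_{\ZZ[G]}\bigl({\rm Cl}^T_{S_K^\infty\cup\{v\}}(L)\bigr).$$
In particular, $\beta:=\alpha e_I\theta^{(a)}_{L/K,S,T}(0)R(\psi)$ is central in $\ZZ_{(p)}[G]$, and $\alpha(g-1)\theta^{(a)}_{L/K,S,T}(0)R(\psi)=(g-1)\beta$ belongs to $\ZZ_{(p)}[G]$, establishing the integrality claim.

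The main obstacle will be to upgrade the resulting annihilation statement from ${\rm Cl}^T_{S_K^\infty\cup\{v\}}(L)_{(p)}$ to the full class group ${\rm Cl}^T(L)_{(p)}$. The kernel of the natural surjection ${\rm Cl}^T(L)\twoheadrightarrow{\rm Cl}^T_{S_K^\infty\cup\{v\}}(L)$ is the cyclic $G$-submodule generated by the class $[w_v]$ of a place $w_v$ of $L$ above $v$; this class is annihilated by $g-1$ since $g\in G_v$ fixes $w_v$. Writing $(g-1)\beta c=\beta(g-1)c$ via centrality of $\beta$, the remaining step will be to verify that $g-1$ acts trivially on the image $\beta\cdot{\rm Cl}^T(L)_{(p)}$ inside this cyclic $G$-kernel, exploiting both the specific shape $\beta=e_I\beta$ and the orbit structure of the places of $L$ above $v$.
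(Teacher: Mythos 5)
Your outline matches the paper's approach at the level of structure: invoke Proposition~\ref{iwasawa theory+} to obtain ${\rm LTC}(L/K,\ZZ_{(p)}[G])$, identify $\wp_a(S,v)=\{S_K^\infty\}$, and apply Theorem~\ref{main conj}(iv) with $I=S_K^\infty$ and $v_0=v$ to get that $\beta:=\alpha\,\theta^{(a)}_{L/K,S,T}(0)R(\psi)$ lies in $\ZZ_{(p)}[G]$ and annihilates ${\rm Cl}^T_{S'}(L)_{(p)}$ for $S'=S_K^\infty\cup\{v\}$. Your integrality argument for $(g-1)\beta$ is correct. (Your detour via the identity $(g-1)=(g-1)(1-e_v)$ and Lemma~\ref{exam 2}(iii) is more elaborate than necessary — the paper instead notes directly that $e_I\cdot\theta^{(a)}_{L/K,S,T}(0)=\theta^{(a)}_{L/K,S,T}(0)$ — but it is valid, and you correctly dispose of the degenerate case $\widehat G'_{a,S}=\emptyset$.)

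However, the final and crucial step — upgrading the annihilation of ${\rm Cl}^T_{S'}(L)_{(p)}$ to annihilation of ${\rm Cl}^T(L)_{(p)}$ — is explicitly left as a promissory note in your write-up (``the remaining step will be to verify\ldots''), and the sketch you give does not close. The kernel $\mathcal{K}_{(p)}$ of ${\rm Cl}^T(L)_{(p)}\twoheadrightarrow{\rm Cl}^T_{S'}(L)_{(p)}$ is the $\ZZ_{(p)}[G]$-module generated by $[w_v]$, as you note; but the fact that $g\in G_v$ fixes $w_v$ only shows that $g-1$ kills the \emph{generator}, not the module. For $h\notin G_v$ one has $(g-1)\cdot h[w_v]=(gh-h)[w_v]$, which is not forced to vanish unless $h^{-1}gh\in G_v$, i.e.\ unless $g$ lies in the core $\bigcap_{h}hG_vh^{-1}$ (as it does automatically when $G$ is abelian). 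The $e_I$-structure you propose to exploit shows, after lifting to $\ZZ_{(p)}[G/G_v]$ and using that $e_I\cdot\QQ[G/G_v]=e_{\bf 1}\cdot\QQ[G/G_v]$ is the $G$-fixed line spanned by $\sum_{w\mid v}w$, that $\beta\cdot\mathcal{K}_{(p)}$ is $G$-fixed — hence $(g-1)\beta$ annihilates $\mathcal{K}_{(p)}$. But that only yields $((g-1)\beta)^2\cdot{\rm Cl}^T(L)_{(p)}=0$: knowing that $(g-1)\beta$ kills both ends of the exact sequence $0\to\mathcal{K}_{(p)}\to{\rm Cl}^T(L)_{(p)}\to{\rm Cl}^T_{S'}(L)_{(p)}\to0$ does not imply it kills the middle. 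To show $(g-1)$ kills $\beta\cdot{\rm Cl}^T(L)_{(p)}$ directly, you would need $\beta c$ (for arbitrary $c$, not just $c\in\mathcal{K}_{(p)}$) to land in the $G$-fixed part of $\mathcal{K}_{(p)}$; this does not follow from $\beta=e_I\beta$ together with $\beta\cdot{\rm Cl}^T(L)_{(p)}\subseteq\mathcal{K}_{(p)}$, since ${\rm Cl}^T(L)_{(p)}$ is torsion and admits no $e_I$-eigenspace decomposition. The paper asserts this step in one terse line (``$g-1$ annihilates the quotient of ${\rm Cl}^T(L)_{(p)}$ by ${\rm Cl}^T_{S'}(L)_{(p)}$''), which your proposal neither reproduces nor justifies; as it stands, your proof establishes the integrality claim but not the full annihilation claim.
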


\begin{proof} In this case, 
%
%
the first observation in Remark \ref{easy split case} applies with $I = S_K^\infty$ to show that $\Hom_G(\co_{L,S,T}^\times,X_{L,S})= \Hom^I_G(\co_{L,S,T}^\times,X_{L,S})$. In addition, for this $I$, the group $N_I$ is trivial and it can be easily checked that $e_I\cdot \theta^{([K:\QQ])}_{L/K,S,T}(0) = \theta^{([K:\QQ])}_{L/K,S,T}(0)$. Finally, we note that the observations in Remark \ref{dals improve cor} can also be applied with $V = S_K^\infty$ (so $a=[K:\QQ]$) and $v \in S_{L/K}^{\rm ram}$. 

Taken all of these observations into account, we can now apply Theorem \ref{main conj}(iv) to $L/K$ with this choice $I$ in order to deduce that  $\alpha\cdot \theta^{([K:\QQ])}_{L/K,S,T}(0)\cdot R(\psi)$ 
%
%
belongs to $\ZZ_{(p)}[G]$ and annihilates ${\rm Cl}^T_{S'}(L)_{(p)}$ with $S' = S_K^\infty\cup \{v\}$. 
 The claimed result is therefore true since, if $g\in G_v$, then $g-1$ annihilates the quotient of ${\rm Cl}^T(L)_{(p)}$ by ${\rm Cl}^T_{S'}(L)_{(p)}$. 
\end{proof}

\subsection{CM fields} 

In this section we assume $L$ is CM and $K$ is totally real. We fix an odd prime $p$ and an isomorphism $\CC\cong\CC_p$ (the choice of which will not matter in the sequel) and thereby identify $\G$ with the set of irreducible $\CC_p$-valued characters of $G$. We write $\tau$ for the element of $G$ induced by complex conjugation, $e_-$ for the central idempotent $(1-\tau)/2$ of $\QQ[G]$ and $\G^{-}$ for the subset of $\G$ comprising characters for which one has $\chi(\tau) = -\chi(1)$. For any $G$-module $M$ we also write $M^-$ for the $G$-submodule $\{m \in M: \tau(m) = - m\}$.

\subsubsection{} 
For each place $w$ of $L$, Gross has defined in \cite[\S1]{G0} a local $p$-adic absolute value 
\[ ||\cdot ||_{w,p}: L_w^\times \xrightarrow{r_w} G_{L_w^{\rm ab}/L_w} \xrightarrow{\chi^{-1}_w} \ZZ_p^\times,\]
where $L_w^{\rm ab}$ denotes the maximal abelian extension of $L_w$ (in a fixed algebraic closure of $L_w$), $r_w$ the reciprocity map of local class field theory and $\chi_w$ the $p$-adic cyclotomic character.

For an abelian group $M$ we set $M_p:=\ZZ_p\otimes_\ZZ M$. For any finite set of places $\Sigma$ of $K$ that contains both $S^\infty_K$ and the set of all $p$-adic places of $K$, we write $R_{L,\Sigma}^p$ for the homomorphism of $\bz_p[G]$-modules $\mathcal{O}^{\times,-}_{L,\Sigma,p} \to Y^-_{L,\Sigma,p}$ that sends each $u$ in $\mathcal{O}^{\times,-}_{L,\Sigma}$ to ${\sum}_{\pi\in \Sigma_L}\mathrm{log}_p||u||_{\pi,p} \cdot \pi$.

We write $\G^{\rm ss}$ for the subset of $\G^-$ comprising characters $\rho$ for which the homomorphism
$\Hom_{\CC_p[G]}(V_{\check\rho},\CC_p\cdot R_{L,\Sigma}^p)$ is injective (for a $\CC_p$-representation $V_{\check\rho}$ of character $\check\rho$). We obtain an idempotent (that is independent of the set $\Sigma$) by setting 
$e_{\rm ss} := {\sum}_{\rho\in \G^{\rm ss}}e_\rho\in\zeta(\QQ_p[G])$.
%

\begin{remark}\label{GSC remark}{\em In \cite[Conj. 1.15]{G0} Gross conjectures that each $\rho\in\G^-$ belongs to $\G^{\rm ss}$, and hence that $e_{\rm ss}=e_-$. The first of these predictions is commonly referred to as the Gross-Kuz'min Conjecture for $\rho$ (since it is related to earlier work of Kuz'min \cite{kuzmin}). We further recall that, by \cite[Th. 3.1(i) and (iii)]{burns2}, $\rho$ has this property if and only if it satisfies the `Order of Vanishing conjecture' formulated by Gross in \cite[2.12a)]{G0}.}\end{remark}


\subsubsection{} We now review the main evidence for the leading term conjecture in this setting.
 
\begin{proposition}\label{cm evidence} ${\rm LTC}(L/K,\ZZ_{(p)}[G]e_{\rm ss})$ is valid if $p$ is odd and satisfies the following:
\begin{itemize}
\item[(i)] either $p$ is prime to $|G|$ or $\mu_p(L)$ vanishes, and
\item[(ii)] the Weak $p$-adic Gross-Stark Conjecture (of \cite[Conj. 2.12b)]{G0}) is valid for $L/K$.
\end{itemize}
\end{proposition}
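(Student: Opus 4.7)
The plan is to adapt the structure of the proof of Proposition \ref{iwasawa theory+} to the CM-minus setting. Setting $T\Omega := \delta_G(\theta^*_{L/K,S,T}(0)) - \chi_{\ZZ[G]}(C_{L,S,T}, R_{L,S})$ in $K_0(\ZZ[G], \RR[G])$, Proposition \ref{ltc2} reduces the assertion to showing that $e_{\rm ss} \cdot T\Omega$ vanishes after extension of scalars to $K_0(\ZZ_{(p)}[G]e_{\rm ss}, \RR[G]e_{\rm ss})$. The first step is to show that $e_{\rm ss}\cdot T\Omega$ actually lies in $K_0(\ZZ[G]e_{\rm ss}, \QQ[G]e_{\rm ss})$, i.e.\ is rational, so that (after using the standard identification with the Grothendieck group of finite $G$-modules of $p$-power order and finite projective dimension) it suffices to prove the vanishing of the image $e_{\rm ss}\cdot T\Omega_p$ in $K_0(\ZZ_p[G]e_{\rm ss}, \QQ_p[G]e_{\rm ss})$. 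On the minus part in the CM case, this rationality is classical, for odd $\chi \in \widehat{G}^{-}$ the order of vanishing of $L_S(\check\chi,s)$ at $s=0$ equals $|\{v \in S_{\rm f} : G_v \subseteq \ker\check\chi\}|$ (archimedean places contribute nothing) and the leading terms are algebraic by the Siegel--Klingen--Shintani theorems (promoted to a Galois-equivariant statement via Stark's conjecture, which is known for CM extensions of totally real fields on the minus part).

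Secondly, I would deduce the required vanishing of $e_{\rm ss}\cdot T\Omega_p$ from the equivariant Iwasawa main conjecture for the cyclotomic $\ZZ_p$-extension $L^{+}_{\infty}/K$ of the maximal totally real subfield $L^{+}$ of $L$. Under hypothesis (i), this main conjecture has been established in the non-abelian case by Ritter--Weiss and Kakde (independently by the first author), and gives an identity in an Iwasawa-theoretic relative $K$-group between an equivariant $p$-adic $L$-function and an Iwasawa-theoretic characteristic class of the relevant Selmer complex. By the standard reflection/duality formalism, this identity controls the minus part of the finite-level Euler characteristic after inducing up to $L_\infty := L\cdot K^{\rm cyc}$ and then specialising. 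The descent morphism from $L_\infty$ down to $L$ can be analysed through an exact triangle relating $C_{L,S,T}$ to its Iwasawa-theoretic analogue, and on the $e_{\rm ss}$ component the Gross--Kuz$'$min property built into the definition of $\widehat{G}^{\rm ss}$ guarantees that the descent kernels and cokernels are well-controlled, so that one obtains a precise identity at finite level up to an explicit trivial-zero correction term.

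The main obstacle — and the reason both hypothesis (ii) and the restriction to $e_{\rm ss}$ intervene — is precisely this trivial-zero correction. At $s=0$ the Deligne--Ribet/Serre $p$-adic $L$-function $L_p(\check\chi,s)$ has an order of vanishing at least $|\{v\mid p : G_v \subseteq \ker\check\chi\}|$, whereas the archimedean $L$-value $L_S^{*}(\check\chi,0)$ does not. The Weak $p$-adic Gross--Stark Conjecture of \cite[Conj.~2.12b)]{G0} is exactly the statement that the leading $p$-adic derivative at each such trivial zero is given, up to units, by the determinant of Gross's regulator $R^p_{L,S}$ on the $\check\chi$-isotypic component. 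By the very definition of $\widehat{G}^{\rm ss}$, on the $e_{\rm ss}$-component this regulator is non-degenerate, so the Gross--Stark formula converts the Iwasawa-theoretic LTC obtained in the previous step into the finite-level equality (\ref{ltcPi}) for $\Pi=\ZZ_{(p)}[G]e_{\rm ss}$. The hardest technical point is making this trivial-zero accounting precise as an identity in the relative $K$-group $K_0(\ZZ_p[G]e_{\rm ss},\QQ_p[G]e_{\rm ss})$, rather than merely character-by-character; here one would invoke (or parallel) the $K$-theoretic reformulation of Gross--Stark developed by Nickel and by Johnston--Nickel, together with the fact that on the CM-minus side the local epsilon-constant contributions at $p$-adic places vanish.
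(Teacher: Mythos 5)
Your proposal identifies the correct ingredients and follows the same high-level strategy as the paper's proof: rationality on the minus part via Siegel, the equivariant Iwasawa main conjecture under hypothesis (i), and the Weak $p$-adic Gross--Stark conjecture under hypothesis (ii) to handle the trivial-zero correction, with the $e_{\rm ss}$-restriction ensuring non-degeneracy of the relevant $p$-adic regulators. However, where you sketch re-deriving the Iwasawa-theoretic descent and trivial-zero analysis from scratch, the paper instead cites \cite[Cor.~3.8(i)]{burns2} as a black box: that result shows directly that $\alpha_p(T\Omega(L/K)^-)$ lies in the image of $(\zeta(\QQ[G])(e_{-}-e_{\rm ss}))^\times$ under $\delta^-:=\delta_{\ZZ_{(p)}[G]^-,\QQ[G]^-}$. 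The paper then concludes with a short formal step you omit: since $\ZZ_{(p)}[G]^-$ is semilocal, Bass's theorem identifies this image precisely with the kernel of the projection $\beta_p: K_0(\ZZ_{(p)}[G]^-,\QQ[G]^-)\to K_0(\ZZ_{(p)}[G]^{\rm ss},\QQ[G]^{\rm ss})$, giving the desired vanishing. Your plan would require essentially reproving the main result of \cite{burns2} --- substantial in its own right --- and you would also need to be careful that your ``$e_{\rm ss}\cdot T\Omega$ in $K_0(\ZZ[G]e_{\rm ss},\QQ[G]e_{\rm ss})$'' step is set up correctly (the paper works with the minus idempotent $e_-$ first, which is unambiguously rational, and only brings in $e_{\rm ss}$ at the final stage through the map $\beta_p$), so on balance the paper's organization is cleaner even though your outline captures the right content.
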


\begin{proof} We assume (without further comment) that conditions (i) and (ii) are satisfied. For any ring $R$ of characteristic zero we also set $R[G]^- := R[G]e_-$ and $R[G]^{\rm ss} := R[G]e_{\rm ss}$.  

By a Theorem of Siegel \cite{Siegel}, Stark's conjecture is valid for characters in $\G^-$. The element 
$$T\Omega(L/K)^-:=\delta_{\ZZ[G]^-,\RR[G]^-}(\theta^*_{L/K,S,T}(0)e_-)-\chi_{\ZZ[G]^-}(C_{L,S,T}^-,R_{L,S}^-)$$
therefore belongs to  $K_0(\ZZ[G]^-,\QQ[G]^-) \subset K_0(\ZZ[G]^-,\RR[G]^-)$ (cf. \cite[Rem. 6.1.1(ii)]{dals}). 

It is thus enough to show $T\Omega(L/K)^-$ belongs to the kernel of the composite scalar extension map 
\[ K_0(\ZZ[G]^-,\QQ[G]^-) \xrightarrow{\alpha_p} K_0(\ZZ_{(p)}[G]^-,\QQ[G]^-)\xrightarrow{\beta_p} K_0(\ZZ_{(p)}[G]^{\rm ss},\QQ[G]^{\rm ss}).\]
 In addition, the argument of \cite[Cor. 3.8(i)]{burns2} implies   
$\alpha_p(T\Omega(L/K)^-)$ belongs to the image of $(\zeta(\QQ[G])(e_- -e_{\rm ss}))^\times$ under the map $\delta^-:= \delta_{\ZZ_{(p)}[G]^-,\QQ[G]^-}$. It is thus enough to note that, since $\ZZ_{(p)}[G]^-$ is semilocal, Bass's Theorem implies that $\delta^-\bigl((\zeta(\QQ[G])(e_- -e_{\rm ss}))^\times\bigr) = \ker(\beta_p)$ (cf. the discussion concerning the diagram \cite[(37)]{burns2}).
%
%
%
%
%
%
\end{proof} 

\begin{corollary}\label{pminuspart} The conjecture ${\rm LTC}(L/K,\ZZ_{(p)}[G]e_{-})$ is valid if $p$ is odd and satisfies either one of the following conditions:
\begin{itemize}\item[(i)] $\mu_p(L)$ vanishes and the Gross-Kuz'min Conjecture is valid for $L/K$;
\item[(ii)] the Sylow $p$-subgroups of $G$ are abelian.
\end{itemize}
\end{corollary}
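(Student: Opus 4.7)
The plan is to derive the corollary from Proposition \ref{cm evidence} by upgrading the idempotent $e_{\rm ss}$ to $e_-$ under the stated hypotheses. The key observation is that $e_{\rm ss}=e_-$ precisely when every character $\chi\in \widehat{G}^-$ satisfies the Gross-Kuz'min Conjecture, as recorded in Remark \ref{GSC remark}.

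For case (i), the identification $e_{\rm ss}=e_-$ is immediate from the Gross-Kuz'min hypothesis, and the vanishing of $\mu_p(L)$ is exactly condition (i) of Proposition \ref{cm evidence}. It then remains to invoke the Weak $p$-adic Gross-Stark Conjecture for $L/K$, which has been established unconditionally for odd $p$ in the setting of CM extensions of totally real fields (via combined work of Dasgupta--Darmon--Pollack, Ventullo, and Dasgupta--Kakde--Ventullo). Assembling these ingredients with Proposition \ref{cm evidence} yields the desired LTC statement in case (i).

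For case (ii), I would proceed by a Sylow-descent argument. Fix a Sylow $p$-subgroup $P$ of $G$ (abelian by hypothesis) and set $K':=L^P$, so that $[K':K]$ is coprime to $p$. The standard functorial properties of refined Euler characteristics and reduced norms (in the spirit of the Galois-descent result in Proposition \ref{ltc2}(iii)) together with the injectivity on $p$-local components of the restriction map on the relevant relative $K$-groups would reduce ${\rm LTC}(L/K,\ZZ_{(p)}[G]e_-)$ to ${\rm LTC}(L/K',\ZZ_{(p)}[P]e_-)$. Since $P$ is abelian, one then applies case (i) to the extension $L/K'$: the Gross-Kuz'min Conjecture is known in the abelian setting by classical results (e.g.\ Greenberg's theorem when $L$ is absolutely abelian), and the $\mu$-invariant hypothesis can be secured either via Ferrero--Washington (when applicable) or by invoking results of Johnston--Nickel that identify abelian situations in which the underlying main conjecture of Iwasawa theory holds unconditionally. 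In either subcase, the Weak $p$-adic Gross-Stark Conjecture is then the theorem of Dasgupta--Kakde--Ventullo cited above.

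The principal technical obstacle will be the Sylow-descent step in case (ii). One must verify that the natural scalar extension map $K_0(\ZZ_{(p)}[G]^-,\QQ[G]^-) \to K_0(\ZZ_{(p)}[P]^-,\QQ[P]^-)$ is injective on the components relevant to $T\Omega(L/K)^-$ (using that $[G:P]$ is a $p$-unit), and that the idempotent $e_-$ is compatible with restriction from $G$ to $P$. These steps are standard in the equivariant Tamagawa number conjecture literature but require careful bookkeeping at the level of relative $K$-theory; once in place, the reduction to the abelian case, and hence to case (i), is routine.
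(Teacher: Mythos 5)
Your case (i) sketch identifies the right starting point ($e_{\rm ss}=e_-$ under Gross--Kuz'min, and $\mu_p(L)=0$ supplying hypothesis (i) of Proposition \ref{cm evidence}), but the final input is wrong as stated: the Weak $p$-adic Gross--Stark Conjecture of \cite[Conj. 2.12b)]{G0} is \emph{not} known unconditionally. The results of Darmon--Dasgupta--Pollack, Ventullo and Dasgupta--Kakde--Ventullo establish the rank-one Gross--Stark formula, not the full leading-term conjecture 2.12b) at arbitrary order of vanishing; the latter presupposes the order-of-vanishing conjecture 2.12a), which is equivalent to Gross--Kuz'min. The paper instead appeals to \cite[Th. 2.6 and Th. 3.1(i),(iii)]{burns2}, which show directly that Gross--Kuz'min for all $\chi\in\widehat G^{-}$ \emph{implies} Weak Gross--Stark for all such characters; that is the correct bridge and must replace the unconditional citation.

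Your case (ii) argument does not work. Restriction to a \emph{single} Sylow $p$-subgroup $P$ is not injective on the relevant relative $K$-groups; the standard induction machinery (Swan, Dress, and in this precise setting Nickel) reduces $K_0(\ZZ_p[G]^-,\QQ_p[G]^-)$ to the family of \emph{all} $p$-elementary subquotients, not to one Sylow subgroup. Moreover, the Sylow subgroup $P$ need not be normal in $G$, so $L^P/K$ need not be Galois, and since $p$ is odd one has $\tau\notin P$, so the fixed field $L^P$ is not totally real -- hence $L/L^P$ is not a CM extension of a totally real field and the $e_-$-formalism (and the input theorems) would not apply to it. The paper's route avoids all of this: one first shows $T\Omega(L/K)^-_p$ has finite order (Nickel \cite[Th. 1]{NickelSS} together with \cite[Rem. 6.1.1(iii)]{dals}), then applies \cite[Prop. 6.2]{NickelIntegrality} to reduce to intermediate Galois \emph{CM} extensions $L'/K'$ of $L/K$ with $p$-elementary (or $p$-elementary $\times\{1,\tau\}$) Galois group; the hypothesis that Sylow $p$-subgroups of $G$ are abelian forces every such $p$-elementary subquotient to be abelian (\cite[\S 3]{NickelSS}), and for abelian CM/totally-real extensions the required vanishing $T\Omega(L'/K')^-_p=0$ is the content of \cite[Th. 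B(a)]{bbds}, built on Dasgupta--Kakde \cite{dk}. Note in particular that case (ii) in the paper is \emph{not} deduced from case (i), contrary to your final reduction.
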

\begin{proof} In case (i) one has $e_{\rm ss}=e_-$ and, upon combining the results of \cite[Th. 2.6 and Th. 3.1(i),(iii)]{burns2}, one deduces that the validity of the Gross-Kuz'min Conjecture for characters in $\G^-$ implies the validity of the Weak $p$-adic Gross-Stark Conjecture for characters in $\widehat{G}^-$. The validity of ${\rm LTC}(L/K,\ZZ_{(p)}[G]e_{-})$ in case (i) thus follows from Proposition \ref{cm evidence}.

We now consider case (ii) and the element $T\Omega(L/K)^-$ of $K_0(\ZZ[G]^-,\QQ[G]^-)$ defined above. By the same argument as in the proof of Proposition \ref{iwasawa theory+}, it is enough to show that the projection $T\Omega(L/K)^-_p$ of $T\Omega(L/K)^-$ to $K_0(\ZZ_p[G]^-_p,\QQ_p[G]^-_p)$ vanishes.

Now, upon combining a result \cite[Th. 1]{NickelSS} of Nickel with \cite[Rem. 6.1.1(iii)]{dals}, one knows that $T\Omega(L/K)^-_p$ has finite order. It then follows from \cite[Prop. 6.2]{NickelIntegrality} that $T\Omega(L/K)^-_p$ vanishes if and only $T\Omega(L'/K')^-_p$ vanishes for every intermediate Galois CM extension $L'/K'$ of $L/K$ whose Galois group is either $p$-elementary or a direct product of a $p$-elementary group with $\{1,\tau\}$. In addition, by the argument of \cite[\S 3]{NickelSS}, the assumption that the Sylow $p$-subgroups of $G$ are abelian implies that every $p$-elementary subquotient of $G$ is abelian.

To complete the proof, it is therefore enough to show that $T\Omega(L'/K')^-_p$ vanishes for every intermediate abelian extension $L'/K'$ of $L/K$ in which $L'$ is CM and $K'$ totally real. For such extensions $L'/K'$, however, the validity of ${\rm LTC}(L'/K',\ZZ_p[{\rm Gal}(L'/K')]e_-)$, and hence the vanishing of $T\Omega(L'/K')^-_p$, has been derived by Bullach, Daoud and the first and third named authors \cite[Th. B (a)]{bbds} from the seminal work of Dasgupta and Kakde in \cite{dk}.
\end{proof}

\begin{remark}\label{Nickel6.8}{\em Nickel has also proved in \cite[Th. 6.8]{NickelIntegrality} that if $\mu_p(L)$ vanishes and $p$ is `non-exceptional', in the sense of \cite[Def. 6.5]{NickelIntegrality}, then ${\rm LTC}(L/K,\ZZ_{(p)}[G]e_{-})$ is valid. We recall that there can only be finitely many `exceptional' primes.
}\end{remark}

\subsubsection{}\label{cm intro deduction} 
%
%

As a concrete application, we now combine Corollary \ref{pminuspart}(ii) with Theorem \ref{main conj}(iii) in order to derive Theorem \ref{end result intro}(ii). In fact, strictly speaking, we need a slight extension of Theorem \ref{main conj}(iii) . Specifically, for any idempotent $e_{-}$ as in Example \ref{exam1} (iii), the argument of Theorem \ref{main conj} shows that if ${\rm LTC}(L/K;\Lambda[G](e_{-}\cdot e_{(a),S}))$ is valid, then claims (ii), (iii) and (iv) remain valid after multiplying each product on their left-hand sides by $e_{-}$. 

In particular, for $e_-=(1-\tau)/2$, the validity of ${\rm LTC}(L/K,\ZZ_{(p)}[G]e_{-})$ in the setting of Theorem \ref{end result intro}(ii) implies that, for each $v\in S$ and $x\in\mathfrak{m}_{v}^a(L/K)$, one has 
\begin{equation}\label{almost there}e_-(1-e_v)e_{(a),S}\cdot \iota_\#(x)\cdot \mathscr{L}^a(|G|\varphi)\,\in \,\gamma_T^{-1}\cdot\iota_\#\bigl(\ZZ_{(p)}\otimes_{\ZZ}{\rm Fit}^{{\rm tr},a}_{\ZZ[G]}(\mathcal{S}_S^T(L))\bigr)\end{equation}

We now set $e_a:=e_- e_{(a),S}$. Then, by fixing $v$ in $S_K^\infty$,  and noting that we are assuming $a<|S|-1$, Lemma \ref{fit vanishing}(ii) implies that
$$(1-e_v+e_{\bf 1})e_{(a),S}\bigl(\ZZ_{(p)}\otimes_{\ZZ}{\rm Fit}^{{\rm tr},a}_{\ZZ[G]}(\mathcal{S}_S^T(L))\bigr)=e_a\bigl(\ZZ_{(p)}\otimes_{\ZZ}{\rm Fit}^{{\rm tr},a}_{\ZZ[G]}(\mathcal{S}_S^T(L))\bigr).$$
In this case, we can therefore take $x$ to be $p^t$ for any $t\in\NN_0$ with $p^te_a\in \ZZ_{(p)}[G]$.

With these choices of $v$ and $x$ one then finds that the left-hand side of (\ref{almost there}) is equal to
$$e_- e_- e_{(a),S}\cdot \iota_\#(p^t)\cdot \mathscr{L}^a(|G|\varphi)=p^te_a\cdot \mathscr{L}^a(|G|\varphi),$$
as required to complete the proof of Theorem \ref{end result intro}(ii) (with the curent choice of $e_a$).


To finally justify the claim made immediately after the statement of Theorem \ref{end result intro}, we note that for $a=0$ one has $\widehat{G}_{(0),S} = \widehat{G}$ so $e_{(0),S}=1$, $e_0=e_-$ and  $t=0$. Therefore, with $\varphi':= |G|\varphi$, one has 
$$p^0e_0\cdot \mathscr{L}^0(\varphi')=e_-\theta_{L/K,S}^{\langle0\rangle}(0)R(\varphi')=e_-\theta_{L/K,S}^{\langle0\rangle}(0)e_{0,S}R(\varphi')=e_-\theta_{L/K,S}^{\langle0\rangle}(0)=\theta_{L/K,S}^{\langle0\rangle}(0),$$
where the third equality holds because $e_{0,S}(\RR\cdot X_{L,S})$ vanishes.

\begin{remark}{\em One can formulate a `$p$-adic' analogue of ${\rm LTC}(L/K,\ZZ_{(p)}[G]e_{\rm ss})$ in which the roles of Artin $L$-series and Dirichlet regulators are replaced by $p$-adic Artin $L$-series (as discussed by Greenberg in \cite{greenberg}) and Gross's $p$-adic regulators. The argument of \cite[Cor. 3.8(i)]{burns2} implies this analogous conjecture is valid modulo the vanishing of $\mu_p(L)$ (and even in some cases unconditionally). Such results can be combined with the methods developed here to prove analogues of Theorem \ref{end result intro} in this setting. 
}\end{remark}

\subsection{Function fields}\label{gffs} We assume in this section that $K$ is a global function field and we recall that ${\rm LTC}(L/K)$ has been verified by Kakde and the first author in \cite{bk}. 

In this case, therefore, the arithmetic properties of Artin $\mathscr{L}$-invariants derived in \S \ref{6.1} are unconditionally valid.  In particular, just as in \S \ref{cm intro deduction}, it is straightforward to derive case (i) of Theorem \ref{end result intro} as a consequence of Theorem \ref{main conj}(iii) after setting $e_a:=(1-e_v)e_{(a),S}$ for an arbitrary place $v\in S$. We omit the details but, instead, we explicitly show that Theorem \ref{main conj} implies a higher-order, non-abelian generalisation of a result of Deligne.


To do this we define the $T$-modified degree zero divisor class group ${\rm Pic}^{T,0}(L)$ of $L$ to be the cokernel of the natural divisor map from $L_T^\times$ to the group of divisors of $L$ of degree zero with support outside $T$.
This map is injective (as $L_T^\times$ is torsion-free since $T$ is non-empty) and the $G$-module ${\rm Pic}^{T,0}(L)$ is an extension of ${\rm Pic}^{0}(L)$ by a finite group.

\begin{corollary}\label{gff cor}Let $K$ be a global function field and fix $a\in\NN_0$ with $a<|S|$. Then, for $v\in S$, one has $\mathfrak{m}_{v}^a(L/K)\cdot\iota_\#\bigl({\sum}_{I\in \wp_a(S,v)} \Theta_{L/K,S,T}^I\bigr)\subseteq{\rm Fit}^{{\rm tr},a}_{\ZZ[G]}({\rm Pic}^{T,0}(L)^\vee).$ 

If in addition $a<|S|-1$ then for $v\in S$, $\varphi\in \bigcap_{I\in \wp_a(S,v)}\Hom_G^{I}(\co_{L,S,T}^\times,X_{L,S})$ and $x\in\mathfrak{m}_{v}^a(L/K)$, one has
\begin{equation*}(1-e_v)e_{(a),S}\cdot x\cdot \iota_\#(\gamma_T\cdot\mathscr{L}^a(\varphi))\,\in \,{\rm Fit}^{{\rm tr},a}_{\ZZ[G]}({\rm Pic}^{T,0}(L)^\vee).\end{equation*}
In particular, this containment is valid for every $\varphi$ in $|G|\cdot\Hom_G(\co_{L,S,T}^\times,X_{L,S})$.
\end{corollary}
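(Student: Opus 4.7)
The proof combines two inputs. Firstly, the result \cite{bk} of Kakde and the first author establishes ${\rm LTC}(L/K)$ unconditionally for global function fields, and hence also ${\rm LTC}(L/K;\ZZ[G]e_{(a),S})$. One may therefore apply Theorem \ref{main conj}(ii) and (iii) with $\Lambda = \ZZ$ to obtain the claimed containments, but with ${\rm Fit}^{{\rm tr},a}_{\ZZ[G]}(\mathcal{S}_S^T(L))$ in place of ${\rm Fit}^{{\rm tr},a}_{\ZZ[G]}({\rm Pic}^{T,0}(L)^\vee)$. For the second assertion, one then multiplies the conclusion of Theorem \ref{main conj}(iii) by $\iota_\#(\gamma_T)$ in order to bring it into the stated shape.

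It therefore suffices to establish the inclusion of $\xi(\ZZ[G])$-ideals
$$
{\rm Fit}^{{\rm tr},a}_{\ZZ[G]}(\mathcal{S}_S^T(L)) \subseteq {\rm Fit}^{{\rm tr},a}_{\ZZ[G]}({\rm Pic}^{T,0}(L)^\vee).
$$
Higher transpose Fitting invariants are monotonic under surjective maps of finitely generated modules: this is immediate from the defining formula (\ref{fitting matrix}), since any surjection $M \twoheadrightarrow N$ extends a presentation of $M$ to one of $N$ by adjoining further relations. Hence the problem reduces to producing a canonical $G$-equivariant surjection $\mathcal{S}_S^T(L) \twoheadrightarrow {\rm Pic}^{T,0}(L)^\vee$. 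The plan is to exploit the function-field-specific structure of the Weil-\'etale cohomology complex $C_{L,S,T}$ recalled in \S\ref{eTNCreview}: in this case the cohomology of $C_{L,S,T}$ admits a direct description in terms of the Picard scheme of the smooth projective curve associated to $L$, yielding a canonical four-term exact sequence linking $\mathcal{O}_{L,S,T}^\times$, $X_{L,S}$, ${\rm Pic}^{T,0}(L)$, and the degree-zero part of ${\rm Cl}_S^T(L)$. Applying $\Hom(-,\ZZ)$ to the associated short exact sequence involving the finite cokernel $X_{L,S}/{\rm div}(\mathcal{O}_{L,S,T}^\times)$ yields a surjection $\Hom(\mathcal{O}_{L,S,T}^\times,\ZZ) \twoheadrightarrow (X_{L,S}/{\rm div}(\mathcal{O}_{L,S,T}^\times))^\vee$, and combining this with the Pontrjagin dual of the four-term sequence and the defining extension (\ref{selmer lemma I}) produces a $G$-equivariant map $\mathcal{S}_S^T(L) \to {\rm Pic}^{T,0}(L)^\vee$ that restricts to the identity on the common submodule ${\rm Cl}_S^T(L)^\vee$.

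The main obstacle is to verify that this map is genuinely surjective. This amounts to showing that two naturally arising extensions of $(X_{L,S}/{\rm div}(\mathcal{O}_{L,S,T}^\times))^\vee$ by ${\rm Cl}_S^T(L)^\vee$---one from (\ref{selmer lemma I}), the other from dualising the four-term sequence above---agree as extensions of $G$-modules. I expect this compatibility to follow from a direct analysis of the Weil-\'etale complex $C_{L,S,T}$, whose cohomology is manifestly built from precisely the Picard group and unit group appearing on both sides of the comparison.
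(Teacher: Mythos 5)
Your reduction of the corollary to the single inclusion ${\rm Fit}^{{\rm tr},a}_{\ZZ[G]}(\mathcal{S}_S^T(L)) \subseteq {\rm Fit}^{{\rm tr},a}_{\ZZ[G]}({\rm Pic}^{T,0}(L)^\vee)$, via the validity of ${\rm LTC}(L/K)$ over function fields (from \cite{bk}) and Theorem \ref{main conj}(ii),(iii), is exactly right, and you also correctly observe that this inclusion follows (by monotonicity of higher transpose Fitting invariants under surjections, cf.\ \cite[Th.~3.17(vi)]{bses}) once one produces a $G$-equivariant surjection $\mathcal{S}_S^T(L) \twoheadrightarrow {\rm Pic}^{T,0}(L)^\vee$. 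The gap lies in your construction of that surjection: you propose to build a map by comparing two extensions of $(X_{L,S}/{\rm div}(\mathcal{O}_{L,S,T}^\times))^\vee$ by ${\rm Cl}_S^T(L)^\vee$ --- one from (\ref{selmer lemma I}), one from dualising a Picard-scheme sequence --- and you only say you `expect' the two extension classes to agree. That comparison is precisely the non-trivial content, and it is left unverified, so the argument is not complete as written; moreover, the whole detour through Weil-\'etale cohomology and Picard schemes is unnecessary.

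The paper instead obtains the surjection directly from the defining cokernel presentation (\ref{definingSelmer}) of $\mathcal{S}_S^T(L)$: writing $\theta_T$ for the map defined as in (\ref{definingSelmer}) but with $S = \emptyset$, the map $\theta_T$ extends $\theta_{S,T}$ on a larger domain, so $\mathcal{S}_S^T(L) = {\rm cok}(\theta_{S,T})$ surjects onto ${\rm cok}(\theta_T)$ tautologically, with no extension comparison required. One then identifies ${\rm cok}(\theta_T) \cong {\rm Pic}^{T,0}(L)^\vee$ by applying $\Hom_\ZZ(-,\ZZ)$ to the short exact sequence $0 \to L_T^\times \to {\rm Div}_T(L) \to {\rm cok} \to 0$ given by the injective divisor map, using that the resulting long exact sequence produces $\theta_T$ and that ${\rm Pic}^{T,0}(L)$ is the finite torsion subgroup of the cokernel, so that ${\rm Ext}^1_\ZZ$ of this cokernel is exactly ${\rm Pic}^{T,0}(L)^\vee$. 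You should replace your speculative Weil-\'etale comparison with this elementary cokernel argument.
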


\begin{proof} Following Theorem \ref{main conj}(ii) and (iii), we are reduced to showing that ${\rm Fit}_{\ZZ[G]}^{{\rm tr},a}(\mathcal{S}^T_{S}(L)) \subseteq {\rm Fit}_{\ZZ[G]}^{{\rm tr},a}({\rm Pic}^{T,0}(L)^\vee)$. To do this it is in turn enough, by \cite[Th. 3.17(vi)]{bses}, to prove that ${\rm Pic}^{T,0}(L)^\vee$ is isomorphic to a quotient of $\mathcal{S}^T_{S}(L)$.

We write $\theta_T$ for the homomorphism defined as in (\ref{definingSelmer}) but with $S$ taken to be empty. Then there exists a natural surjection from $\mathcal{S}^T_{S}(L)$ to ${\rm cok}(\theta_T)$ and so it suffices to prove that the latter group is naturally isomorphic to ${\rm Pic}^{T,0}(L)^\vee$.

We write $\Delta_T$ for the (injective) divisor map from $L_T^\times$ to the group ${\rm Div}_T(L)$ of divisors of $L$ with support outside $T$ and note ${\rm Pic}^{T,0}(L)$ identifies with the (finite) torsion subgroup of ${\rm cok}(\Delta_T)$. It is then enough to note the functor $\Hom_\ZZ(-,\ZZ)$ applies to the tautological short exact sequence $0 \to L_T^\times \to {\rm Div}_T(L) \to {\rm cok}(\Delta_T)\to 0$ to give an exact sequence
\[ {\prod}_{w \notin T_L}\ZZ \xrightarrow{\theta_T} \Hom_\ZZ(L_T^\times,\ZZ) \to {\rm Ext}^1_\ZZ({\rm cok}(\Delta_T),\ZZ) \to 0\]
and ${\rm Ext}^1_\ZZ({\rm cok}(\Delta_T),\ZZ) ={\rm Ext}^1_\ZZ({\rm cok}(\Delta_T)_{\rm tor},\ZZ)\cong {\rm Ext}^1_\ZZ({\rm Pic}^{T,0}(L),\ZZ) = {\rm Pic}^{T,0}(L)^\vee$.
\end{proof}

\begin{remark}{\em If $\widehat G_{(a),S} = \widehat G$ and $S^a_{\rm min}\not=S$, then Remark \ref{exp rem} implies  $\mathfrak{m}^a_{S,T}(L/K)= \xi(\ZZ[G])$ contains $1$ and so can be omitted from the inclusion in Corollary \ref{gff cor}. 
 }\end{remark}

\begin{remark}\label{deligne rem}{\em Corollary \ref{gff cor} specialises to recover the Brumer-Stark Conjecture for $L/K$, as proved by Deligne (cf. \cite[Chap. V]{tate}). To see this note that if $a=0$, then $S^a_{\rm min}=\emptyset$ and Lemma \ref{fit vanishing} (iii) implies  $(1-e_v-e_{\bf 1})e_{(0),S}\cdot{\rm Fit}_{\ZZ[G]}^{{\rm tr},0}(\mathcal{S}^T_S(L)) = {\rm Fit}_{\ZZ[G]}^{{\rm tr},0}(\mathcal{S}^T_S(L))$ for each $v\in S$. This implies that 
$\mathfrak{m}^0_{v}(L/K) = \xi(\Z[G])$ and hence $1\in \mathfrak{m}^0_{v}(L/K)$ and so Corollary \ref{gff cor} implies $\Theta_{L/K,S,T}^\emptyset\subseteq\iota_\# ({\rm Fit}^{{\rm tr},0}_{\ZZ[G]}({\rm Pic}^{T,0}(L)^\vee)).$ 
Hence, for any $\psi\in\Hom_G(\co_{L,S,T}^\times,X_{L,S})$ the element $\theta_{L/K,S,T}(0)=e_\emptyset\theta_{L/K,S,T}(0)=e_\emptyset\theta_{L/K,S,T}^{(0)}(0)R(\psi)$ belongs to $\iota_\#\bigl({\rm Fit}^{{\rm tr},0}_{\ZZ[G]}({\rm Pic}^{T,0}(L)^\vee)\bigr).$
%
This in turn implies Deligne's result since, if $G$ is abelian, then $\iota_\#\bigl({\rm Fit}^{{\rm tr},0}_{\ZZ[G]}({\rm Pic}^{T,0}(L)^\vee)\bigr)$ is contained in $\iota_\#\left({\rm Ann}_{\ZZ[G]}({\rm Pic}^{T,0}(L)^\vee)\right)={\rm Ann}_{\ZZ[G]}({\rm Pic}^{T,0}(L)).$
}\end{remark}


\subsection{Chinburg-Stark elements}\label{cs}

If $\psi\in \widehat{G}$ satisfies the Strong-Stark conjecture, then as explained in Remark \ref{Picomponent}, the claim of Theorem \ref{eTNC} is valid in the setting of Remark \ref{stark context}. This observation combines with the specialisation of \cite[Th. 5.2]{bms1} to strengthen the refined Stark conjecture formulated in \cite[Conj. 2.6.1]{dals}. 
By means of a concrete application, we shall now use this approach to prove Theorem \ref{lastone2}. 

We use the notation of Remark \ref{stark context}. In particular, we identify each $\psi$ in $\widehat{G}$ with a representation $G \to {\rm GL}_{\psi(1)}(\mathcal{O}_\psi)$, abbreviate $\co_\psi$ to $\co$ and write $E$ for the field of fractions of $\co$. We write ${\rm tr}_{E/\QQ}$ for the canonical trace map. We also abbreviate the functors $M \mapsto {^{\Pi_\psi}}M$ and $M \to {_{\Pi_\psi}}M$ to $M \to M^\psi$ and $M \to M_\psi$ respectively,
and set ${\rm pr}_\psi:={\sum}_{g\in G}\psi(g)g^{-1}$.


\begin{proposition}\label{very last} Fix $\theta$ in $\CC\cdot\Hom_{G}(\mathcal{O}^\times_{L,S,T},X_{L,S})$. Then, for each non-trivial character $\psi$ in $\widehat{G}$ that satisfies the Strong-Stark Conjecture, one has 
\begin{equation*} |G|\cdot{\rm tr}_{E/\QQ}( L_{S,T}^{*}(\check\psi,0){\rm det}_\CC(|G|m\cdot (R_{L,S}^{-1}\circ \theta)^\psi)\cdot{\rm pr}_\psi) \in
{\rm Ann}_{\ZZ[G]}({\rm Cl}^T_{S'}(L)).\end{equation*}
Here $m$ is any element of 
$\mathcal{O}_\psi$ with $m\cdot\theta(\mathcal{O}_{L,S,T}^{\times,\psi})\subseteq (X_{L,S,\psi})_{\rm tf}$ and $S'$ is any subset of $S$ as described in \cite[Th. 5.2(iii)]{bms1} with respect to $\pi = \pi^\psi$.
\end{proposition}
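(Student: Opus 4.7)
The plan is to combine Theorem \ref{eTNC}, specialised to the bimodule $\Pi_\psi$ of Example \ref{exam1}(iv) and the canonical projection $\pi=\pi^\psi$ of Example \ref{exam 2r}(ii), with the annihilation result \cite[Th. 5.2(iii)]{bms1} for Weil-Stark elements, and then to descend from an $\mathcal{O}$-annihilator of the $\psi$-component to a $\ZZ[G]$-annihilator of $\mathrm{Cl}^T_{S'}(L)$ via the standard trace-projector construction.

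First, by Remark \ref{Picomponent}, the Strong-Stark Conjecture for $\psi$ is equivalent to the validity of ${\rm LTC}(L/K,\Pi_\psi)$. Theorem \ref{eTNC} therefore implies that every generalised Stark element $\varepsilon^{\pi^\psi}_{\underline{b}}$ built from a subset $\underline{b}=(b_i)_{1\le i\le r}$ of the locally-free $\mathcal{O}$-module $({_{\Pi_\psi}}X_{L,S})_{\rm tf}$ that is linearly independent over $\mathcal{O}$ (with $r=r^{\Pi_\psi}_{L,S}={\rm ord}_{s=0}L_S(\psi,s)$) belongs to the Weil-Stark module $\W^{\Pi_\psi,\pi^\psi}_{L/K,S}$. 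Now specialising \cite[Th. 5.2(iii)]{bms1} to the pair $(\Pi_\psi,\pi^\psi)$ with $\mathcal{A}=\mathcal{O}$ (so that $\delta(\mathcal{A})=\mathcal{O}$, since $\mathcal{O}$ is commutative Dedekind), one obtains that for any family $(\varphi_i)_{1\le i\le r}$ of $\mathcal{O}$-linear homomorphisms $\mathcal{O}^{\times,\psi}_{L,S,T}\to\mathcal{O}$ the element $(\wedge_{i=1}^{i=r}\varphi_i)(\gamma_T\cdot\varepsilon^{\pi^\psi}_{\underline{b}})$ of $\mathcal{O}$ annihilates the natural $\psi$-component of $\mathrm{Cl}^T_{S'}(L)^\vee$.

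The key computation is then to identify this wedge evaluation with the determinantal expression appearing in the proposition. Fix $\underline{b}$ as above to be an $\mathcal{O}$-basis of a finite-index free submodule of $({_{\Pi_\psi}}X_{L,S})_{\rm tf}$, let $(b_i^{*})_{i}$ be its dual basis, and set $\varphi_i:=b_i^{*}\circ(|G|m\cdot\theta)^\psi$. The hypothesis on $m$ ensures that each $\varphi_i$ is a well-defined element of $\Hom_\mathcal{O}(\mathcal{O}^{\times,\psi}_{L,S,T},\mathcal{O})$. The defining identity $\lambda^{\pi^\psi}_{L,S}(\varepsilon^{\pi^\psi}_{\underline{b}})=L_S^{*}(\check\psi,0)\cdot\wedge_{i=1}^{i=r}b_i$ coming from Definition \ref{hnase} and Example \ref{exam 2r}(ii), combined with the determinantal property of reduced wedge evaluation from \cite[Lem. 4.10]{bses}, should then yield
\begin{equation*}(\wedge_{i=1}^{i=r}\varphi_i)(\gamma_T\cdot\varepsilon^{\pi^\psi}_{\underline{b}})=L_{S,T}^{*}(\check\psi,0)\cdot{\det}_\CC\bigl(|G|m\cdot(R_{L,S}^{-1}\circ\theta)^\psi\bigr),\end{equation*}
where the factor $\gamma_T$ promotes $L_S^{*}$ to $L_{S,T}^{*}$.

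Finally, I would pass from the $\mathcal{O}$-annihilation of the $\psi$-component to a $\ZZ[G]$-annihilation of $\mathrm{Cl}^T_{S'}(L)$ itself via the standard trace-projector trick: using $\mathrm{pr}_\psi=(|G|/\psi(1))\cdot e_\psi$, a routine check shows that, for any $\alpha\in\mathcal{O}$ annihilating the $\psi$-component of a $G$-module $M$, the element $|G|\cdot{\rm tr}_{E/\QQ}(\alpha\cdot\mathrm{pr}_\psi)$ lies in $\ZZ[G]$ and annihilates $M$; applying this with $\alpha:=L_{S,T}^{*}(\check\psi,0)\cdot\det_\CC(|G|m\cdot(R_{L,S}^{-1}\circ\theta)^\psi)$ and $M=\mathrm{Cl}^T_{S'}(L)$ then gives the claim. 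The main obstacle is the wedge-versus-determinant comparison above: one must carefully unwind the definitions of $\varepsilon^{\pi^\psi}_{\underline{b}}$ and $\lambda^{\pi^\psi}_{L,S}$ and track the integrality scalars $|G|$ and $m$ through the reduced exterior power calculus of \cite[\S4]{bses}. In the case $r=1$ relevant to Theorem \ref{lastone2} this reduces to a direct scalar computation, but in general it requires the multilinear algebra of reduced Rubin lattices; all other steps are essentially bookkeeping.
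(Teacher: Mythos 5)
Your proposal follows essentially the same route as the paper: invoke Remark \ref{Picomponent} to translate Strong-Stark for $\psi$ into ${\rm LTC}(L/K,\Pi_\psi)$, apply Theorem \ref{eTNC} to place $\varepsilon^{\pi^\psi}_{\underline{b}}$ in $\W^{\Pi_\psi,\pi^\psi}_{L/K,S}$, apply \cite[Th.~5.2(iii)]{bms1}, and compare wedge evaluation against the determinant via \cite[Lem.~4.10]{bses}. The identity you write down for the wedge evaluation is exactly the paper's key equality (\ref{very last equality}) with $n=|G|m$, so that step is sound.

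Two presentational/technical points where the paper is tighter. First, you take $\underline{b}$ to be an $\mathcal{O}$-basis of a \emph{finite-index} free submodule of $(X_\psi)_{\rm tf}$; the dual basis $b_i^*$ then need not take $\mathcal{O}$-integral values on all of $(X_\psi)_{\rm tf}$, so $\varphi_i=b_i^*\circ(|G|m\cdot\theta)^\psi$ may fail to land in $\mathcal{O}$. The paper avoids this by fixing a prime $p$, proving the containment after $p$-localisation, and choosing $\underline{b}\subset(X_\psi)_{\rm tf}$ to be an $\mathcal{O}_{(p)}$-basis of $(X_\psi)_{{\rm tf},(p)}$, which makes the $\varphi_i$ land in $\Hom_{\mathcal{O}}(U^\psi,\mathcal{O})_{(p)}$ as required. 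Second, you peel off the passage from an $\mathcal{O}$-annihilator of the $\psi$-component to a $\ZZ[G]$-annihilator of ${\rm Cl}^T_{S'}(L)$ as a separate ``trace-projector trick'' to be verified by hand, whereas the paper obtains the $\ZZ[G]$-annihilation in one stroke from \cite[Th.~5.2(iii)]{bms1} by feeding it the data $a=|G|\in\delta(\mathcal{O})=\mathcal{O}$ and $\epsilon={\rm tr}_{E/\QQ}$. Your two-step version is morally the same and would also work, but the trace-projector claim as you state it (that $|G|\cdot{\rm tr}_{E/\QQ}(\alpha\cdot{\rm pr}_\psi)$ lies in $\ZZ[G]$ and annihilates $M$ whenever $\alpha$ annihilates $M_\psi$) needs a careful check, and it is precisely the kind of bookkeeping that \cite[Th.~5.2(iii)]{bms1} is designed to subsume.

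Bottom line: the argument is correct in outline and matches the paper's strategy, but you should add the $p$-localisation (so the basis $\underline{b}$ really is a local basis) and you may as well cite \cite[Th.~5.2(iii)]{bms1} directly for the $\ZZ[G]$-annihilation with $\epsilon={\rm tr}_{E/\QQ}$, rather than re-deriving that step.
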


\begin{proof} We set $U:= \mathcal{O}^\times_{L,S,T}$ and $X := X_{L,S}$. 
Since the given hypotheses imply that the claim of Theorem \ref{eTNC} is unconditionally valid for $\Pi_\psi$, we shall deduce the claimed result by applying the containment of \cite[Th. 5.2(iii)]{bms1} to the homomorphism $\pi=\pi^\psi$ and a suitable Stark element as in Remark \ref{stark context}.
We set $\gamma_{T,\psi}:=\iota_{\Pi_\psi}(\gamma_{T})$.

It suffices to prove the displayed containment after $p$-localisation. To do this we fix a prime $p$ and a subset $\underline{b} = \{b_i\}_{1\le i\le r}$ of $(X_\psi)_{\rm tf}$ which gives an $\mathcal{O}_{(p)}$-basis of $(X_\psi)_{{\rm tf},(p)}$. For each integer $i$ with $1\leq i\leq r$ we set $\theta_{b_i}:= b_i^*\circ \theta$. Then for any element $n$ of $\mathcal{O}$ one has
\begin{equation}\label{very last equality} L_{S,T}^{*}(\check\psi,0)\cdot{\rm det}_\CC(n\cdot (R_{L,S}^{-1}\circ \theta)^\psi) = (\wedge_{i=1}^{i=r}(n\cdot\theta_{b_i}))(\gamma_{T,\psi}\cdot\varepsilon^\pi_{\underline{b}}).\end{equation}

%
%
%
%
%

Now, if $m\cdot\theta$ maps the sublattice $U^\psi$ of $E\cdot U$ to $X_{\psi,{\rm tf}}$ then 
we may apply \cite[Th. 5.2(iii)]{bms1} with
$\varphi_i:=|G|m\cdot \theta_{b_i}\in \Hom_{\mathcal{O}}(U^\psi,\mathcal{O})_{(p)}$ for each index $i$, with $a=|G|\in\co=\delta(\mathcal{O})$ and with $\epsilon$ the trace map ${\rm tr}_{E/\QQ}$. 
The claimed result now follows directly by combining this result with the equality (\ref{very last equality}) for $n = |G|m$ and with Theorem \ref{eTNC}. 
\end{proof}


\begin{remark}\label{very last remark}{\em The containment in Proposition \ref{very last} is finer than the prediction made in \cite[Conj. 2.6.1]{dals} in that the term $\psi(1)^{-1}|G|^{3+r}e_\psi = |G|^{2+r}{\rm pr}_\psi$ and group ${\rm Cl}_{S'}(L)$ that occur in loc. cit. are replaced by $|G|^{1+r}{\rm pr}_\psi$ and ${\rm Cl}^T_{S'}(L)$. We also recall that, if $p$ is odd, then Nickel \cite[Th. 1]{NickelSS} has recently proved the $p$-part of the Strong Stark Conjecture for totally odd characters of totally real fields.
}\end{remark}

Turning now to the proof of Theorem \ref{lastone2}, we note that the first three claims are proved in \cite[Th. 4.3.1(ii) and Prop. 12.2.1]{dals}. The annihilation statement of Theorem \ref{lastone2} (iv) is however finer than that of \cite[Prop. 12.2.1]{dals}. The key point in its proof is that the hypotheses on $\psi$ and $S$ that are made in Theorem \ref{lastone2} imply ${\rm dim}_{E}(E\cdot X_{L,S^\infty_K,\psi}) = 1$ and hence that the set $S' = S^\infty_K$ satisfies the hypothesis of \cite[Th. 5.2(iii)]{bms1} (and therefore also Proposition \ref{very last}) with respect to the homomorphism $\pi = \pi^\psi$.

Given the observation in Remark \ref{very last remark}, to prove Theorem \ref{lastone2}(iv) one need only make the following two changes to the proof of \cite[Prop. 12.2.1(iv)]{dals}: the use of the containment \cite[(44)]{dals} is replaced by the stronger containment discussed in Remark \ref{finer ssc rem} below (with $r = 1$ and $S' = S^\infty_K$) and the use of \cite[Lem. 11.1.2(i)]{dals} is replaced by the finer computation used in the proof of \cite[Th. 5.2(iii)]{bms1}. 

\begin{remark}\label{finer ssc rem}{\em In the notation used in the proof of Proposition \ref{very last}, the $\mathcal{O}_{(p)}$-modules $U_{(p)}^\psi$ and $(X_{\psi})_{{\rm tf},(p)}$ are both free of rank $r$, so one can choose $\theta$ with $\theta(U_{(p)}^\psi) = (X_{\psi})_{{\rm tf},(p)}$. For such $\theta$, the equality (\ref{very last equality}) with $n=1$ combines with \cite[Prop. 5.11]{bms1} to imply that
%
%
\[ |G|^{1+r}(\wedge_{i=1}^{i=r}\theta_{b_i})(\gamma_{T,\psi}\cdot\varepsilon_{\underline{b}}) = |G| L_{S,T}^{*}(\check\psi,0)\cdot{\rm det}_\CC(|G|\cdot (R_{L,S}^{-1}\circ \theta)^\psi) \in {\rm Ann}_{\mathcal{O}} ({\rm Cl}_{S'}^T(L)_\psi)_{(p)}.\]
This implies, by choice of $\theta$, that $|G|^{1+r}\gamma_{T,\psi}\cdot\varepsilon_{\underline{b}} \in {\rm Ann}_{\mathcal{O}} ({\rm Cl}_{S'}^T(L)_\psi)\cdot ({\wedge}^{r}_{\mathcal{O}}U^\psi)_{(p)}$ and hence  
\begin{align*}\label{finer ssc}|G|^{1+r}L_{S,T}^{*}(\check\psi,0)\cdot ({\wedge}^r_{\mathcal{O}}X_{\psi,{\rm tf}})_{(p)}
&=\, \mathcal{O}_{(p)}\cdot |G|^{1+r}L_{S,T}^{*}(\check\psi,0)\wedge_{i=1}^{i=r}b_i\\
&=\, \mathcal{O}_{(p)}\cdot (\wedge_{\CC}^r R_{L,S})(|G|^{1+r}\gamma_{T,\psi}\cdot\varepsilon_{\underline{b}})\notag\\
&\subseteq\, {\rm Ann}_{\mathcal{O}} ({\rm Cl}_{S'}^T(L)_\psi)\cdot ({\wedge}^{r}_{\CC}{\rm R}_{L,S})({\wedge}^{r}_{\mathcal{O}}U^\psi)_{(p)}.\end{align*}
Since this is true for all $p$ it refines the containment \cite[(44)]{dals} in which ${\rm Cl}_{S'}(L)_\psi$ rather than
${\rm Cl}_{S'}^T(L)_\psi$ occurs (and, in this regard, we also recall Remark \ref{dals improve}.)   
}\end{remark}

\end{document}